\let\url=\undefined
\title{Convex Hull Realizations of the Multiplihedra }
\author {Stefan Forcey}
\thanks{Thanks to {\Xy-pic} for the diagrams. }
\email{sforcey@tnstate.edu}
\address{Department of Physics and Mathematics\\
Tennessee State University\\
Nashville, TN 37209 \\
USA\\
}
\keywords{n-category morphisms, A-infinity maps, multiplihedron, homotopy, geometric combinatorics}
\swapnumbers \theoremstyle{plain}
\newtheorem{theorem}{Theorem}[section]
\newtheorem{lemma}[theorem]{Lemma}
\theoremstyle{definition}
\newtheorem{definition}[theorem]{Definition}
\newtheorem{example}[theorem]{Example}
\theoremstyle{remark}
\newtheorem{remark}[theorem]{Remark}
\def\cal#1{\mathcal{#1}}
\begin{document}

\begin{abstract}
We present a simple algorithm for determining the extremal points in Euclidean space whose convex
hull  is the $n^{th}$ polytope in the sequence known as the multiplihedra. This answers the open
question of whether the multiplihedra could be realized as convex polytopes. We use this
realization to unite the approach to $A_n$-maps of Iwase and Mimura to that of Boardman and Vogt.
We include a review of the appearance of the $n^{th}$ multiplihedron for various $n$ in the studies
of higher homotopy commutativity, (weak) $n$-categories, $A_{\infty}$-categories, deformation
theory, and moduli spaces. We also include suggestions for the use of our realizations in some of
these areas as well as in related studies, including enriched category theory and the graph
associahedra.
$$
\xy 0;/r.45pc/:
   (-4,3)*=0{}="a1";
  (4,3)*=0{}="a2";
  (6,-2)*=0{}="a3";
  (0,-7)*=0{}="a4";
  (-6,-2)*=0{}="a5";
  (-6,6.5)*=0{}="b1";
  (6,6.5)*=0{}="b2";
  (10,-4)*=0{}="b3";
  (0,-12)*=0{}="b4";
  (-10,-4)*=0{}="b5";
  (-8,10.5)*=0{}="c1";
  (8,10.5)*=0{}="c2";
  (13,7.5)*=0{}="c3";
  (16,0)*=0{}="c4";
  (14,-6)*=0{}="c5";
  (4,-14)*=0{}="c6";
  (0,-16)*=0{}="c7";
  (-4,-14)*=0{}="c8";
  (-14,-6)*=0{}="c9";
  (-16,0)*=0{}="c10";
  (-13,7.5)*=0{}="c11";
 (-9,-1.4)*{\hole}="x"; (-9,-1.4)*{\hole}="x";
(-6.8,4.4)*{\hole}="y";
  (9,-1.4)*{\hole}="w";
  (0,-12)*{\hole}="v";
 (6.8,4.4)*{\hole}="z";
  "a1";"a2" \ar@{-}; "a1";"a2"\ar@{-};
  "a2";"a3"  \ar@{-};
    "a3";"a4" \ar@{-};
  "a4";"a5" \ar@{-};
  "a5";"a1" \ar@{-};
 "b1";"b2" \ar@{-};
 "b2";"b3" \ar@{-};
 "b3";"b4" \ar@{-};
 "b4";"b5" \ar@{-};
 "b5";"b1" \ar@{-};
 "c1";"c2"\ar@{-};
  "c2";"c3"  \ar@{-};
    "c3";"c4" \ar@{-};
  "c4";"c5" \ar@{-};
"c5";"c6"\ar@{-};
  "c6";"c7"  \ar@{-};
    "c7";"c8" \ar@{-};
  "c8";"c9" \ar@{-};
"c9";"c10"\ar@{-};
  "c10";"c11"  \ar@{-};
  "c11";"c1"  \ar@{-};
"b1";"c1"\ar@{-}; "b2";"c2"\ar@{-}; "b3";"c5"\ar@{-};
  "b4";"c6"  \ar@{-};
  "b4";"c8"  \ar@{-};
  "b5";"c9"  \ar@{-};
"a1";"y"  \ar@{-}; "y";"c11"  \ar@{-}; "a2";"z"  \ar@{-}; "z";"c3"  \ar@{-}; "a3";"w"  \ar@{-};
"w";"c4"  \ar@{-}; "a4";"v"  \ar@{-}; "v";"c7"  \ar@{-}; "a5";"x"  \ar@{-}; "x";"c10"  \ar@{-};
     \endxy
$$
Figure 1: The main character: the 3-d multiplihedron $\cal{J}(4).$
\end{abstract}

\maketitle \tableofcontents
\section{Introduction}
The associahedra are the famous sequence of polytopes denoted $\cal{K}(n)$ from \cite{Sta} which
characterize the structure of weakly associative products. $\cal{K}(1) = \cal{K}(2) = $ a single
point, $\cal{K}(3)$ is the line segment, $\cal{K}(4)$ is the pentagon, and $\cal{K}(5)$ is the
following 3d shape:
$${\cal K}(5) = \xy 0;/r.3pc/:
   (0,16)*=0{}="a";
  (11,11)*=0{}="b";
  (16,0)*=0{}="c";
  (11,-11)*=0{}="d";
  (0,-16)*=0{}="e";
  (-11,-11)*=0{}="f";
  (-16,0)*=0{}="g";
  (-11,11)*=0{}="h";
  (0,8)*=0{}="2a";
  (8,0)*=0{}="2b";
  (0,-8)*=0{}="2c";
  (-8,0)*=0{}="2d";
(0,4)*=0{}="3a";
  (0,-4)*=0{}="3b";
 (-2.44,5.56)*{\hole}="p";
 (2.44,5.56)*{\hole}="q";
 (-2.44,-5.56)*{\hole}="r";
 (2.44,-5.56)*{\hole}="s";
  "a";"b" \ar@{-}; "a";"b"\ar@{-};
  "b";"c"  \ar@{-};
    "c";"d" \ar@{-};
  "d";"e" \ar@{-};
 "e";"f" \ar@{-};
 "f";"g" \ar@{-};
 "g";"h" \ar@{-};
 "h";"a" \ar@{-};
 "2a";"2b"\ar@{-};
  "2b";"2c"  \ar@{-};
    "2c";"2d" \ar@{-};
  "2d";"2a" \ar@{-};
"2a";"a"\ar@{-};
  "2b";"c"  \ar@{-};
    "2c";"e" \ar@{-};
  "2d";"g" \ar@{-};
"3a";"p"\ar@{-}; "p";"h"\ar@{-};
  "3b";"r"  \ar@{-};
  "r";"f"  \ar@{-};
"3a";"q"\ar@{-}; "q";"b"\ar@{-}; "3b";"s"\ar@{-};
  "s";"d"  \ar@{-};
  "3b";"3a"  \ar@{-};
%
%
     \endxy
$$
The original examples of weakly associative product
 structure are the $A_n$ spaces, topological $H$-spaces with weakly associative
 multiplication of points. Here ``weak'' should be understood as ``up to homotopy.'' That is,
 there is a path in the space
 from $(ab)c$ to $a(bc).$ An $A_{\infty}$-space $X$ is characterized by its admission of an action
 $$\cal{K}(n) \times X^n \to X$$
 for all $n.$

 Categorical examples begin with the monoidal categories as defined in \cite{MacLane},
 where there is a
 weakly associative tensor product of objects. Here ``weak'' officially means ``naturally isomorphic.''
 There is a natural
 isomorphism $\alpha: (U\otimes V) \otimes W \to U\otimes (V \otimes W).$

 The complexes now known as the multiplihedra, usually denoted $\cal{J}(n)$,
  were first pictured by Stasheff, for $n \le 4$ in
\cite{sta2}. The $n^{th}$ multiplihedron as a complex can be seen as a subdivision of the complex
$\cal{K}(n) \times I.$ Indeed the drawing of $\cal{J}(4)$ in \cite{sta2} appears as a pentagonal
cylinder. The drawing in Figure 1 of this paper can be seen as a view of that cylinder from below.
In \cite{sane2} the authors give an alternative definition of $\cal{J}(n)$ based on the subdivision
of the cylinder with $\cal{K}(n)$ base.

 The multiplihedra were introduced in order to approach a full description of the category of
$A_{\infty}$ spaces by providing the underlying structure for morphisms which preserved the
structure of the domain space ``up to homotopy'' in the range. Recall that an $A_{\infty}$ space
itself is a monoid only ``up to homotopy.''  Thus the multiplihedra are used to recognize the
$A_{\infty}$ (as well as $A_n$) maps. Stasheff described how to construct the 1-skeleton of these
complexes, but stopped short of a full combinatorial description.

In \cite{BV1} Boardman and Vogt take up the challenge of a complete description of the category of
$A_{\infty}$ spaces and maps (and their $A_n$ versions.) Their approach is to use sequences of
spaces of binary trees with interior edges given a length in $[0,1]$. They show that the space of
such trees  with $n$ leaves (under certain equivalence relations regarding length zero edges) is
precisely the $n^{th}$ associahedron. They then develop several homotopy equivalent versions of a
space of \emph{painted} binary trees with interior edges of length in $[0,1].$ These they use to
define maps between $A_{\infty}$ spaces which preserve the multiplicative structure up to homotopy.
A later definition of the same sort of map was published by Iwase and Mimura in \cite{IM}. They
give the first detailed definition of the sequence of complexes $\cal{J}(n)$ now known as the
multiplihedra, and describe their combinatorial properties. A good review of the combinatorics of
their definition is in \cite{Kawa}. This latter reference also shows how the permuto-associahedra
can be decomposed by a combinatorial use of the multiplihedra.

 The study of the $A_{\infty}$ spaces
and their maps is still in progress. There is an open question about the correct way of defining
composition of these maps in order to form a category. In \cite{BV1} the obvious composition is
shown not to be associative. There are also interesting questions about the extension of
$A_n$-maps, as in \cite{Hemmi}, and about the transfer of $A_{\infty}$ structure through these
maps, as in \cite{Markl}. In the latter there is an open question about canonical decompositions of
the multiplihedra. The realizations we describe here lend themselves well to experimentation upon
such decompositions.

The overall structure of the associahedra is that of a topological operad, with the composition
given by inclusion. The multiplihedra together form a bimodule over this operad, with the action
again given by inclusion. This structure mirrors the fact that the spaces of painted trees form a
bimodule over the operad of spaces of trees, where  the compositions and actions are given by the
grafting of trees, root to leaf.

The multiplihedra  appear frequently in higher category theory. The definitions of  bicategory and
tricategory homomorphisms each include commuting pasting diagrams as seen in \cite{lst1} and
\cite{GPS} respectively. The two halves of the axiom for a bicategory homomorphism together form
the boundary of the multiplihedra $\cal{J}(3),$  and the two halves of the axiom for a tricategory
homomorphism together form the boundary of $\cal{J}(4).$ Since weak $n$-categories can be
understood as being the algebras of higher operads, these facts can be seen as the motivation for
defining morphisms of operad (and $n$-operad) algebras in terms of their bimodules. This definition
is mentioned in \cite{bat} and developed in detail in \cite{Hess}. In the latter paper it is
pointed out that the bimodules in question must be co-rings, which have a co-multiplication with
respect to the bimodule product over the operad.

 The multiplihedra have
appeared in many  areas related to deformation theory and $A_{\infty}$ category theory. A diagonal
map is constructed for these polytopes in \cite{umble}. This allows a functorial monoidal structure
for certain categories of $A_{\infty}$-algebras and $A_{\infty}$-categories. A different, possibly
equivalent, version of the diagonal is presented in \cite{MS}. The 3 dimensional version of the
multiplihedron is called by the name Chinese lantern diagram in \cite{Yet}, and used to describe
deformation of functors. There is a forthcoming paper by Woodward and Mau in which a new
realization of the multiplihedra as moduli spaces of disks with additional structure is presented
\cite{Mau}. This realization promises to help allow the authors and their collaborators to define
$A_n$-functors as in \cite{Wood}, as well as morphisms of cohomological field theories.


The purpose of this paper is to describe how to represent Boardman and Vogt's spaces of painted
trees with $n$ leaves as convex polytopes which are combinatorially equivalent to the CW-complexes
 described by Iwase and Mimura. Our algorithm for the vertices of the polytopes is flexible
in that it allows an initial choice of a constant $q \in (0,1)$. The boundary of the open unit
interval corresponds to certain quotient spaces of the multiplihedron. In the limit as $q\to 1$ the
convex hull approaches that of Loday's convex hull representation of the associahedra as described
in \cite{loday}. The limit as $q\to 1$ corresponds to the case for which the mapping strictly
respects the
 multiplication.

 The
limit of our algorithm as $q\to 0$ represents the case for which multiplication in the domain of
the morphism in question is strictly associative. The case for which multiplication in the range is
strictly associative was found by
 Stasheff in \cite{sta2} to yield the associahedra.  It was long assumed that the case for
which the
 domain was associative would likewise yield the associahedra, but we demonstrate in \cite{forceynew}
 that this is not
 so. In the limit as $q\to 0$ the convex hulls instead approach a newly discovered sequence of
polytopes. The low dimensional terms of this new sequence may be found in \cite{paddy} within the
axioms for pseudomonoids in a monoidal bicategory, or in \cite{Sean} within the axioms of enriched
bicategories.
 Recall that when both the range and domain are strictly associative  the multiplihedra
 become the cubes, as seen in \cite{BV1}.

The results in this paper support two related efforts of further research. The first is to describe
the important quotients of the multiplihedra just mentioned. The other project already underway is
to extend the concept of quotient multiplihedra described here to the graph associahedra
 introduced by Carr and Devadoss, in
\cite{dev}. Indeed the algorithm given here does generalize in an analogous way when applied to the
algorithm for geometric realizations of the graph associahedra invented by S. Devadoss.

In Section 2 we review the definition and properties of the multiplihedra, introducing a recursive
combinatorial definition (using the painted trees of \cite{BV1}) of the complex $\cal{J}(n)$ with
the properties described in \cite{IM}. In Section 3 we briefly give some new and provocative
combinatorial results related to the counting of the vertices of $\cal{J}(n)$. In Section 4 we
describe the method for finding geometric realizations of the multiplihedra as convex hulls. The
main result is that these convex hulls are indeed combinatorially equivalent to Stasheff's
multiplihedra. In Section 5 we relate our geometric realization to the spaces of trees defined by
Boardman and Vogt. This is done by defining a space of level trees that obeys the requirements in
\cite{BV1} and which in proof (2) of Lemma~\ref{ball} is shown directly to be homeomorphic to our
convex hull. Section 6 contains the proof of the main result by means of explicit bounding
hyperplanes for the convex hulls.

\section{Facets of the multiplihedra}
Pictures in the form of \emph{painted binary trees} can be drawn to represent the multiplication of
several objects in a monoid, before or after their passage to the image of that monoid under a
homomorphism. We use the term ``painted'' rather than ``colored'' to distinguish our trees with two
edge colorings, ``painted'' and ``unpainted,'' from the other meaning of colored, as in colored
operad or multicategory. We will refer to the exterior vertices of the tree as the  root and the
leaves , and to the interior vertices as nodes.  This will be handy since then we can reserve the
term ``vertices'' for reference to polytopes. A painted binary tree is painted beginning at the
root edge (the leaf edges are unpainted), and always painted in such a way that there are only
three types of nodes. They are:
$$
\xy 0;/r.25pc/:
  (-10,20)*=0{}="a"; (-2,20)*=0{}="b";
  (2,20)*{}="c"; (10,20)*{}="d";
  (14,20)*=0{}="e";
  (-6,12)*=0{\bullet}="v1"; (6,12)*=0{\bullet}="v2";
  (-6,4)*{\txt{\\\\(1)}}="v3"; (6,4)*{\txt{\\\\(2)}} ="v4";
  (14,12)*=0{\bullet}="v5"; (14,4)*{\txt{\\\\(3)}}="v6" \ar@{};
 "a"; "v1" \ar@{-};"a"; "v1" \ar@{-};
 "b"; "v1" \ar@{-};
 "v1"; "v3" \ar@{-}\ar@{};
 "c"; "v2" \ar@{-}; "c"; "v2" \ar@{-};
 "d"; "v2" \ar@{-};
 "v2"; "v4" \ar@{-} \ar@{};
 "c"; "v2" \ar@{=};"c"; "v2" \ar@{=};
 "d"; "v2" \ar@{=};
 "v2"; "v4" \ar@{=} \ar@{};
 "e"; "v5" \ar@{-} ;"e"; "v5" \ar@{-} \ar@{};
 "v5"; "v6" \ar@{-} ;"v5"; "v6" \ar@{-} \ar@{};
 "v5"; "v6" \ar@{=};"v5"; "v6" \ar@{=};
  \endxy
$$
This limitation on nodes implies that painted regions must be connected, that painting must never
end precisely at a trivalent node, and that painting must proceed up both branches of a trivalent
node. To see the promised representation we let the left-hand, type (1) trivalent node above stand
for multiplication in the domain; the middle, painted, type (2)  trivalent node above stand for
multiplication in the range; and the right-hand  type (3) bivalent  node stand for the action of
the mapping. For instance, given $a,b,c,d$ elements of a monoid, and $f$ a monoid morphism, the
following diagram represents the operation resulting in the product $f(ab)(f(c)f(d)).$
\begin{small}
 $$
\xy  0;/r.25pc/:
  (-10,20)*{a}="a"; (-2,20)*{b}="b";
  (2,20)*{c}="c"; (10,20)*{d}="d";
  (-6,12)*=0{\bullet}="v1"; (6,12)*=0{\bullet}="v2";
  (0,0)*=0{\bullet}="v3";
  (0,-7)*{\txt{\\$f(ab)(f(c)f(d))$}}="v4";
  (4,16)*=0{\bullet}="va";
  (8,16)*=0{\bullet}="vb";
  (-4,8)*=0{\bullet}="vc" \ar@{};
 "a" ;"vc" \ar@{-};"a" ;"vc" \ar@{-};
 "b" ;"v1" \ar@{-}  ;
 "c" ;"va" \ar@{-} ;
 "d" ;"vb" \ar@{-} \ar@{};
 "va";"v2" \ar@{=} ;"va";"v2" \ar@{=} ;
 "vb";"v3" \ar@{=} ;
 "vc" ;"v3" \ar@{=} ;
 "v3" ;"v4" \ar@{=} \ar@{};
 %
 %
 "va";"v2" \ar@{-} ;"va";"v2" \ar@{-} ;
 "vb";"v3" \ar@{-} ;
 "vc" ;"v3" \ar@{-} ;
 "v3" ;"v4" \ar@{-} ;
 \endxy
$$
\end{small}

Of course in the category of associative monoids and monoid homomorphisms there is no need to
distinguish the product $f(ab)(f(c)f(d))$ from $f(abcd).$ These diagrams were first introduced by
Boardman and Vogt in \cite{BV1} to help describe multiplication in (and morphisms of) topological
monoids that are not strictly associative (and whose morphisms do not strictly respect that
multiplication.)
 The $n^{th}$
multiplihedron is a  $CW$-complex whose vertices correspond to the unambiguous ways of multiplying
and applying an $A_{\infty}$-map to $n$ ordered elements of an $A_{\infty}$-space. Thus the
vertices correspond to the binary painted trees with $n$ leaves.The edges of the multiplihedra
correspond to either an association $(ab)c\to a(bc)$ or to a preservation $f(a)f(b)\to f(ab).$ The
associations can either be in the range: $(f(a)f(b))f(c)\to f(a)(f(b)f(c))$; or the image of a
domain association:  $f((ab)c)\to f(a(bc)).$

Here are the first few low dimensional multiplihedra. The  vertices are labeled, all but some of
those in the last picture. There the bold vertex in the large pentagonal facet has label
$((f(a)f(b))f(c))f(d)$ and the bold vertex in the small pentagonal facet has label $f(((ab)c)d).$
The others can be easily determined based on the fact that those two pentagons are copies of the
associahedron $\cal{K}(4),$ that is to say all their edges are associations.
$$
{\cal J}(1) = \bullet~{_{f(a)}}
  %
 %
$$
\\\\
$$
{\cal J}(2)= \xy 0;/r.25pc/:
    (-8,0)*{_{f(a)f(b)}~}="e";
  (8,0)*{~_{f(ab)}}="v6" \ar@{};
  "e"; "v6" \ar@{*{\bullet}-*{\bullet}} ;"e"; "v6" \ar@{-} \ar@{};
  \endxy
$$
\\\\
$$
{\cal J}(3)=\hspace{.75in} \xy 0;/r.08pc/:
 (-32,48);(0,48) *=0{^{(f(a)f(b))f(c)}\hspace{1.75in}}  \ar@{*{\bullet}-}; (-32,48);(0,48) *=0{} \ar@{-};
 (0,48);(32,48)*=0{^{f(a)(f(b)f(c))}\hspace{-1in}}  \ar@{*{\bullet}-};
  (32,48);(44,24)*=0{}  \ar@{-};
  (44,24); (56,0)*=0{^{f(a)f(bc)}\hspace{-.75in}}  \ar@{*{\bullet}-};
   (56,0); (44,-24)*=0{}  \ar@{-};
   (44,-24); (32,-48)*=0{^{f(a(bc))}\hspace{-.55in}}  \ar@{*{\bullet}-};
    (32,-48);(0,-48)*=0{}  \ar@{-};
    (0,-48); (-32,-48)*=0{^{f((ab)c)}\hspace{.55in}}  \ar@{*{\bullet}-};
     (-32,-48);(-44,-24)*=0{}  \ar@{-};
     (-44,-24); (-56,0)*=0{^{f(ab)f(c)}\hspace{.75in}}  \ar@{*{\bullet}-};
      (-56,0);(-44,24)*=0{}  \ar@{-};
      (-44,24); (-32,48)*=0{}  \ar@{};
      \endxy
$$
\\\\
$$
{\cal J}(4) = \hspace{1in}\xy 0;/r.55pc/:
   (-4,3)*=0{}="a1";
  (4,3)*=0{}="a2";
  (6,-2)*=0{\bullet}="a3";
  (0,-7)*=0{}="a4";
  (-6,-2)*=0{}="a5";
  (-6,6.5)*=0{}="b1";
  (6,6.5)*=0{}="b2";
  (10,-4)*=0{\bullet}="b3";
  (0,-12)*=0{}="b4";
  (-10,-4)*=0{}="b5";
  (-8,10.5)*=0{^{f(a)(f(bc)f(d))}\hspace{1in}}="c1";
  (8,10.5)*=0{^{(f(a)f(bc))f(d)}\hspace{-1in}}="c2";
  (13,7.5)*=0{^{f(a(bc))f(d)}\hspace{-1in}}="c3";
  (16,0)*=0{^{f((ab)c)f(d)}\hspace{-1in}}="c4";
  (14,-6)*=0{^{(f(ab)f(c))f(d)}\hspace{-1in}}="c5";
  (4,-14)*=0{^{f(ab)(f(c)f(d))}\hspace{-1in}}="c6";
  (0,-16)*=0{}="c7";
  (0,-18)*=0{^{f(ab)f(cd)}};
  (-4,-14)*=0{^{(f(a)f(b))f(cd)}\hspace{1in}}="c8";
  (-14,-6)*=0{^{f(a)(f(b)f(cd))}\hspace{1in}}="c9";
  (-16,0)*=0{^{f(a)(f(b(cd)))}\hspace{1in}}="c10";
  (-13,7.5)*=0{^{f(a)f((bc)d)}\hspace{1in}}="c11";
 (-9,-1.4)*{\hole}="x"; (-9,-1.4)*{\hole}="x";
(-6.8,4.4)*{\hole}="y";
  (9,-1.4)*{\hole}="w";
  (0,-12)*{\hole}="v";
 (6.8,4.4)*{\hole}="z";
  "a1";"a2" \ar@{-}; "a1";"a2"\ar@{-};
  "a2";"a3"  \ar@{-};
    "a3";"a4" \ar@{-};
  "a4";"a5" \ar@{-};
  "a5";"a1" \ar@{-};
 "b1";"b2" \ar@{-};
 "b2";"b3" \ar@{-};
 "b3";"b4" \ar@{-};
 "b4";"b5" \ar@{-};
 "b5";"b1" \ar@{-};
 "c1";"c2"\ar@{-};
  "c2";"c3"  \ar@{-};
    "c3";"c4" \ar@{-};
  "c4";"c5" \ar@{-};
"c5";"c6"\ar@{-};
  "c6";"c7"  \ar@{-};
    "c7";"c8" \ar@{-};
  "c8";"c9" \ar@{-};
"c9";"c10"\ar@{-};
  "c10";"c11"  \ar@{-};
  "c11";"c1"  \ar@{-};
"b1";"c1"\ar@{-}; "b2";"c2"\ar@{-}; "b3";"c5"\ar@{-};
  "b4";"c6"  \ar@{-};
  "b4";"c8"  \ar@{-};
  "b5";"c9"  \ar@{-};
"a1";"y"  \ar@{-}; "y";"c11"  \ar@{-}; "a2";"z"  \ar@{-}; "z";"c3"  \ar@{-}; "a3";"w"  \ar@{-};
"w";"c4"  \ar@{-}; "a4";"v"  \ar@{-}; "v";"c7"  \ar@{-}; "a5";"x"  \ar@{-}; "x";"c10"  \ar@{-};
     \endxy
     ~
$$
\\\\

Faces of the multiplihedra of dimension greater than zero correspond to painted trees that are no
longer binary. Here are the three new types of node allowed in a general painted tree. They
correspond to the the node types (1), (2) and (3) in that they are painted in similar fashion. They
generalize types (1), (2), and  (3) in that each has greater or equal valence than the
corresponding earlier node type.
\begin{small}
 $$
\xy  0;/r.45pc/:
  (-14,20)*=0{}="a"; (-10,20)*=0{}="b";
  (-2,20)*=0{}="c"; (2,20)*=0{}="d";
  (6,20)*=0{}="e"; (14,20)*=0{}="f";
  (-7,17) *{\dots}; (9,17) *{\dots};
  (-8,14)*=0{\bullet}="v1"; (8,14)*=0{\bullet}="v2";
  (-8,8)*{\txt{\\\\(4)}}="v3";
  (8,8)*{\txt{\\\\(5)}}="v4"; \ar@{};
 (18,20)*=0{}="g"; (24,20)*=0{}="h";
 (21,17) *{\dots};
 (21,14)*=0{\bullet}="v5";
  (21,8)*{\txt{\\\\(6)}}="v6";
 "a" ;"v1" \ar@{-};"a" ;"v1" \ar@{-};
 "b" ;"v1" \ar@{-};
 "c" ;"v1" \ar@{-};
 "d" ;"v2" \ar@{-};
 "e" ;"v2" \ar@{-};
 "f" ;"v2" \ar@{-};
 "v1" ;"v3" \ar@{-};
 "v2" ;"v4" \ar@{-};
 "g" ;"v5" \ar@{-};
 "h" ;"v5" \ar@{-};
 "v5" ;"v6" \ar@{-};
 \ar@{} ;
"d" ;"v2" \ar@{=};"d" ;"v2" \ar@{=};
 "e" ;"v2" \ar@{=};
 "f" ;"v2" \ar@{=};
 "v2" ;"v4" \ar@{=};
 "v5" ;"v6" \ar@{=};
 \endxy
$$
\end{small}

\begin{definition}
By \emph{refinement} of painted trees we refer to the relationship: $t$ refines $t'$ means that
$t'$ results from the collapse of some of the internal edges of $t$. This is a partial order on
$n$-leaved painted trees, and we write $t < t'.$ Thus the binary painted trees are refinements of
the trees having nodes of type (4)-(6). \emph{Minimal refinement} refers to the following specific
case of refinement:  $t$ minimally refines $t''$ means that $t$ refines $t''$ and also that there
is no
 $t'$ such that both $t$ refines $t'$ and $t'$ refines $t''$.
\end{definition}

The recursive definition of the $n^{th}$ multiplihedron is stated by describing the type and number
of the facets, or $(n-2)$-dimensional cells. Then the boundary of  ${\cal J}(n)$ is given as the
gluing together of these facets along $(n-3)$-dimensional cells with matching associated painted
trees. Finally ${\cal J}(n)$ is defined as the cone on this boundary. It turns out that the faces
can be indexed by, or labeled by, the painted trees in such a way that the face poset of the
$n^{th}$ multiplihedron is equivalent to the face poset of the $n$-leaved painted trees. This
recasting of the definition allows the two main goals of the current paper: to unite the viewpoints
of \cite{IM} and \cite{BV1}, and to do so via a convex polytope realization.

Iwase and Mimura, however, rather than explicitly stating a recursive definition, give a geometric
definition of the $CW$-complex and then prove all the combinatorial facts about its facets. Here
(for reference sake) we reverse that order and use their theorems as definition (in terms of
painted trees).

 The type and numbers of facets of the multiplihedra are
described in \cite{IM}.


Recall that we refer to an unpainted tree with only one node as a corolla. A \emph{painted corolla}
is
 a painted tree with only one node, of type (6).
 A facet of the multiplihedron corresponds to a painted tree with only one, unpainted, interior
edge, or to a tree with all its interior edges attached to a single painted node (type (2) or (5)).

\begin{definition}\label{lfacets}
A \emph{lower tree} $l(k,s)$ is determined by a selection of $s$ consecutive leaves of the painted
corolla, $1< s \le n$, which will be the leaves of the subtree which has the sole interior edge as
its root edge.
\begin{small}
 $$\xy (0,0) *{ l(k,s) =} \endxy
\xy (0,0) *{  \xy  0;/r.45pc/:
    (-8,22) *{{s \atop \overbrace{~~~~~~~~~}}};
   (-22,20)*{{}^0}="L";(6,20)*{{}^{n-1}}="R";
  (-14,20)*=0{}="a"; (-10,20)*{{}^{k-1}}="b";
  (-6,20)*=0{}="b2";
  (-2,20)*=0{}="c";
  (-8,14)*=0{\bullet}="v1";
  (-8,8)*=0{}="v3";
  (-8,16)*=0{\bullet}="v0";
  (-15,18) *{\dots};
  (-1,18) *{\dots};
  (-8,19) *{\dots};
   "a" ;"v1" \ar@{-};"a" ;"v1" \ar@{-};
   "L" ;"v1" \ar@{-};
   "R" ;"v1" \ar@{-};
 "b" ; "v0" \ar@{-};
 "b2" ; "v0" \ar@{-};
 "v0"; "v1" \ar@{-};
 "v1" ; "c" \ar@{-};
  "v1" ;"v3" \ar@{-};
  \ar@{} ;
"v1" ;"v3" \ar@{=};"v1" ;"v3" \ar@{=};
 \endxy } \endxy
$$
\end{small}

 To each lower tree corresponds a   \emph{lower facet} of the multiplihedron, which in \cite{IM} are denoted
  ${\cal J}_k(r,s)$ where $r = n+1-s.$ Here $k$ is the first
``gap between branches'' of the $s-1$ consecutive gaps (that is, $k-1$ is the first leaf of the $s$
consecutive leaves.)  In the complex ${\cal J}(n)$ defined in \cite{IM} the lower facet  ${\cal
J}_k(r,s)$ is a combinatorial copy of the complex ${\cal J}(r) \times{\cal K}(s).$
\end{definition}
\begin{definition}\label{ufacets}
 The \emph{upper trees} $u(t;r_1,\dots, r_t)$ with all interior (necessarily painted) edges attached to a single painted node will
 appear thus:
\begin{small}
 $$\xy (0,0) *{ u(t;r_1,\dots, r_t) =} \endxy
\xy (0,0) *{  \xy  0;/r.45pc/:
  (-.5,23)*{{}^0}="a"; (1.5,25.25) *{{r_1 \atop \overbrace{~~~}}}; (1.5,23) *{\dots}; (3.5,23)*=0{}="b";
(4.5,23)*=0{}="a2"; (6.5,25.25) *{{r_2 \atop \overbrace{~~~}}}; (6.5,23)  *{\dots};
(8.5,23)*=0{}="b2";
 (12,23)*=0{}="a3"; (14,25.25) *{{r_t \atop \overbrace{~~~}}}; (14,23) *{\dots~~}; (16,23)*{~~^{~n-1}}="b3";
   (2,20)*=0{\bullet}="d";
  (6,20)*=0{\bullet}="e"; (14,20)*=0{\bullet}="f";
 (9,17) *{\dots};
  (8,14)*=0{\bullet}="v2";
  (8,8)*=0{}="v4"; \ar@{};
 "d" ;"v2" \ar@{-}; "d" ;"v2" \ar@{-};
"a" ;"d" \ar@{-};
 "b" ;"d" \ar@{-};
  "a2" ;"e" \ar@{-};
   "b2" ;"e" \ar@{-};
 "a3" ;"f" \ar@{-};
 "b3" ;"f" \ar@{-};
 "e" ;"v2" \ar@{-};
 "f" ;"v2" \ar@{-};
  "v2" ;"v4" \ar@{-};
 \ar@{} ;
"d" ;"v2" \ar@{=};"d" ;"v2" \ar@{=};
 "e" ;"v2" \ar@{=};
 "f" ;"v2" \ar@{=};
 "v2" ;"v4" \ar@{=};
 \endxy } \endxy
$$
\end{small}

  In \cite{IM} the corresponding \emph{upper facets} are labeled ${\cal J}(t;r_1,\dots ,r_t).$ Here  $t$ is the number
   of painted interior edges
   and $r_i$ is the number of
  leaves in the subtree supported by the $i^{th} $ interior edge.
  In the complex
${\cal J}(n)$ defined in \cite{IM} the upper facet  ${\cal J}(t;r_1,\dots ,r_t)$ is a combinatorial
copy of the complex  ${\cal K}(t)\times{\cal J}(r_1)\times\dots\times{\cal J}(r_t).$
\end{definition}
Here is a quick count of upper and lower facets, agreeing precisely with that given in \cite{IM}.
\begin{theorem}\cite{IM}
The number of facets of the $n^{th}$ multiplihedron is:
$$
\frac{n(n-1)}{2}+2^{(n-1)}-1.
$$
\end{theorem}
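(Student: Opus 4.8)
The plan is to count the two disjoint families of facets — the lower facets and the upper facets — separately, using the bijections between facets and lower/upper trees set up in Definitions~\ref{lfacets} and~\ref{ufacets}, and then to add the two counts. The two families are disjoint and exhaust all facets: a lower tree has exactly one interior edge and it is unpainted, while an upper tree has $t\ge 2$ interior edges, all painted and attached to a single painted node; and the painted corolla (no interior edge) is not a facet but all of ${\cal J}(n)$. So the facets are enumerated without repetition by the lower trees $l(k,s)$ together with the upper trees $u(t;r_1,\dots,r_t)$, and it suffices to count each family.

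First I would count the lower facets. A lower tree $l(k,s)$ is specified by an integer $s$ with $2\le s\le n$ together with the position of the block of $s$ consecutive leaves inside the $n$ leaves of the painted corolla; equivalently by the first leaf $k-1$ of that block, which may be any of $0,1,\dots,n-s$. Hence there are exactly $n-s+1$ lower trees for each value of $s$, and the total number of lower facets is
$$
\sum_{s=2}^{n}(n-s+1)=\sum_{j=1}^{n-1}j=\frac{n(n-1)}{2}.
$$

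Next I would count the upper facets. An upper tree $u(t;r_1,\dots,r_t)$ is specified by the ordered tuple $(r_1,\dots,r_t)$ of positive integers with $r_1+\dots+r_t=n$ and $t\ge 2$; the value $t=1$ is excluded because then ${\cal K}(1)\times{\cal J}(n)$ is all of ${\cal J}(n)$, not a facet. The number of compositions of $n$ into exactly $t$ positive parts is $\binom{n-1}{t-1}$, so the total number of upper facets is
$$
\sum_{t=2}^{n}\binom{n-1}{t-1}=\sum_{i=0}^{n-1}\binom{n-1}{i}-\binom{n-1}{0}=2^{n-1}-1.
$$
Adding the two counts gives $\frac{n(n-1)}{2}+2^{n-1}-1$, as claimed.

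The argument is elementary once the facet–tree correspondence from \cite{IM} (recorded in Definitions~\ref{lfacets} and~\ref{ufacets}) is in hand, so I do not anticipate a real obstacle. The only points needing care are the exact index ranges — in particular that $s=n$ is allowed, giving the lower facet ${\cal J}(1)\times{\cal K}(n)$, whereas $t=1$ is not — and the observation that the lower and upper families are disjoint, so that their counts may simply be summed.
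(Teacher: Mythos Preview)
Your proof is correct and follows essentially the same approach as the paper: both count the lower and upper facets separately via the trees of Definitions~\ref{lfacets} and~\ref{ufacets} and then add. The only cosmetic difference is that the paper phrases the upper-facet count in terms of choosing a proper subset of the $n-1$ ``spaces between branches'' (giving $\sum_{k=0}^{n-2}\binom{n-1}{k}$), whereas you phrase it directly as counting compositions of $n$ into $t\ge 2$ parts (giving $\sum_{t=2}^{n}\binom{n-1}{t-1}$); these are the same sum.
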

\begin{proof}
The number of lower trees is $\displaystyle{\frac{n(n-1)}{2}}.$  This follows easily from summing
the ways of choosing $s-1$ consecutive ``gaps between branches'' of the corolla, corresponding to
the choice of $s$ consecutive leaves.   Note that this count includes one more than the count of
the facets of the associahedron, since it includes the possibility of selecting all $n$ leaves.

The upper trees are determined by choosing any size $k$ proper subset of the ``spaces between
branches''
 of the painted corolla, $1 \le k < n-1 $.
 Each set of consecutive
 ``spaces between branches'' in that list of $k$ chosen spaces determines a set of consecutive leaves which
 will be the leaves of a subtree (that is itself a painted corolla)
  with its root edge one of the painted interior edges. If neither of the adjacent spaces to a
  given branch are chosen, its leaf will be the sole leaf of a subtree that is a painted corolla
  with only one leaf.
  Thus we count upper trees by $\displaystyle{ \sum_{k=0}^{n-2} {n-1\choose k} = 2^{(n-1)}-1 }.$
\end{proof}

The construction of the $n^{th}$ multiplihedron may be inductively accomplished by collecting its
facets, and then labeling their faces. The following definition is identical to the properties
demonstrated in \cite{IM}.
\begin{definition}
The first multiplihedron denoted $\cal{J}(1)$ is defined to be the single point $\{*\}.$ It is
associated to the painted tree with one leaf, and thus one type (3) internal node. Assume that the
$\cal{J}(k)$ have been defined for $k=1\dots n-1.$ To $\cal{J}(k)$ we associate the $k$-leaved
painted corolla. We define an $(n-2)$-dimensional
 $CW$-complex $\partial\cal{J}(n)$
as follows, and then define $\cal{J}(n)$ to be the cone on $\partial\cal{J}(n)$.  Now the
top-dimensional cells of $\partial\cal{J}(n)$ (upper and lower facets of $\cal{J}(n)$) are in
 bijection with the set of  painted trees of two types, upper and lower trees as defined above.

 Each sub-facet of an upper or lower facet is labeled with a tree
that is a refinement of the upper or lower tree. Since the facets are products, their sub-facets in
turn are products of faces (of smaller associahedra and multiplihedra) whose dimensions sum to
$n-3.$ Each of these sub-facets thus comes (inductively) with a list of associated trees. There
will always be a unique way of grafting the trees on this list to construct a painted tree that is
a minimal refinement of the upper or
 lower tree associated to the facet in question.
For the sub-facets of an upper facet the recipe is to paint entirely the $t$-leaved tree associated
to a face of $\cal{K}(t)$ and to graft to each of its branches in turn the trees associated to the
appropriate faces of  $\cal{J}(r_1)$ through $\cal{J}(r_t)$ respectively.
 A sub-facet of the
 lower facet $\cal{J}_k(r,s)$ inductively comes with pair of trees.
 The recipe
 for assigning our sub-facet
  an $n$-leaved
 minimal refinement of the $n$-leaved minimal lower tree $l(k,s)$ is to graft the unpainted $s$-leaved tree to the
 $k^{th}$
 leaf of the painted $r$-leaved tree.

 The intersection of two facets in the boundary of $\cal{J}(n)$ occurs along sub-facets of each which
have associated painted trees that are identical. Then $\cal{J}(n)$ is defined to be the cone on
$\partial\cal{J}(n).$
 To $\cal{J}(n)$ we assign the painted corolla of $n$ leaves.
\end{definition}
\begin{remark}
The listing of types and enumeration of facets above corresponds to properties (2-a) through (2-c)
of \cite{IM}. The intersection of facets described in the definition corresponds to properties
(c-1) through (c-4) in \cite{IM}.
\end{remark}
\begin{example}
$$
\cal{J}(1) = \bullet~\hspace{.05in} \xy 0;/r.25pc/:
    (14,6)*=0{}="e";
  (14,2)*=0{\bullet}="v5"; (14,-3)*=0{}="v6" \ar@{};
  "e"; "v5" \ar@{-} ;"e"; "v5" \ar@{-} \ar@{};
 "v5"; "v6" \ar@{-} ;"v5"; "v6" \ar@{-} \ar@{};
 "v5"; "v6" \ar@{=};"v5"; "v6" \ar@{=};
  \endxy
$$

Here is the complex $\cal{J}(2)$ with the upper facet $\cal{K}(2) \times \cal{J}(1)\times
\cal{J}(1)$ on the left and the lower facet $\cal{J}(1)\times \cal{K}(2)$ on the right:

\begin{small}
$$
\xy 0;/r.55pc/: (-12,8)*=0{}="a"; (-4,8)*=0{}="b";
  (4,8)*=0{}="e"; (12,8)*=0{}="f";
  (-2,4)*=0{}="c"; (2,4)*=0{}="d";
(-10,6)*=0{\bullet}="va";
 (-6,6)*=0{\bullet}="vb";
(-8,4)*=0{\bullet}="v1";
 (0,2)*=0{\bullet}="v2";
  (8,4)*=0{\bullet}="v3";
  (8,2)*=0{\bullet}="v4";
  (-8,0)*=0{}="r1";
(0,0)*=0{}="r2";
 (8,0)*=0{}="r3";
 (-8,-1)*=0{\bullet}="lp";
 (0,-1)*=0{}="cp";
 (8,-1)*=0{\bullet}="rp";
"a" ;"v1" \ar@{-}; "a" ;"v1" \ar@{-};
 "b" ;"v1" \ar@{-};
 "v1" ;"r1" \ar@{-};
 "c" ;"v2" \ar@{-};
 "d" ;"v2" \ar@{-};
 "v2" ;"r2" \ar@{-};
 "e" ;"v3" \ar@{-};
 "f" ;"v3" \ar@{-};
 "v3" ;"v4" \ar@{-};
 "v4" ;"r3" \ar@{-};
 "lp" ;"rp" \ar@{-};
 "va" ;"v1" \ar@{=}; "va" ;"v1" \ar@{=};
 "vb" ;"v1" \ar@{=};
 "v1" ;"r1" \ar@{=};
 "v2" ;"r2" \ar@{=};
 "v4" ;"r3" \ar@{=};
 \endxy
$$
\end{small}

And here is the complex $\cal{J}(3).$ The product structure of facets is listed.  Notice  how the
sub-facets (vertices) are labeled. For instance, the upper right vertex is labeled by a tree that
could be constructed by grafting three copies of the single leaf painted corolla onto a completely
painted binary tree with three leaves, or by grafting a single leaf painted corolla and a 2-leaf
painted binary tree onto the leaves of a 2-leaf (completely) painted binary tree.

\begin{small}
$$
\xy 0;/r.2pc/:
 (-32,48);(0,48) *=0{}  \ar@{-}; (-32,48);(0,48) *=0{}  \ar@{-};
 (0,44) *={\txt{$^{\cal{K}(3)\times\cal{J}(1)\times\cal{J}(1)\times\cal{J}(1)}$}};
 (0,48);(32,48)*=0{\bullet}  \ar@{-};
   (32,48);(44,24)*=0{}  \ar@{-};
   (32,14) *={^{\cal{K}(2)\times\cal{J}(1)\times\cal{J}(2)}};
   (-32,14) *={^{\cal{K}(2)\times\cal{J}(2)\times\cal{J}(1)}};
  (44,24); (56,0)*=0{\bullet}  \ar@{-};
   (56,0); (44,-24)*=0{}  \ar@{-} ^>>>>>>>>>>>>>>>>>>>>>{\cal{J}(2)\times\cal{K}(2)};
   (44,-24); (32,-48)*=0{\bullet}  \ar@{-};
    (32,-48);(0,-48)*=0{}  \ar@{-} _>>>>>>>>>>>>>>>>>>>>>>>>{\cal{J}(1)\times\cal{K}(3)};;
    (0,-48); (-32,-48)*=0{\bullet}  \ar@{-};
     (-32,-48);(-44,-24)*=0{}  \ar@{-} ^>>>>>>>>>>>>>>>>>>>>>{\cal{J}(2)\times\cal{K}(2)};
     (-44,-24); (-56,0)*=0{\bullet} \ar@{-};
      (-56,0);(-44,24)*=0{}  \ar@{-};;
      (-44,24); (-32,48)*=0{\bullet}  \ar@{};
            (4,4);(4,8) \ar@{=}; (4,4);(4,8) *=0{\bullet}\ar@{-}; (4,4);(4,8) \ar@{-};
      (4,8);(2,12)  \ar@{-}; (4,8); (4,12) \ar@{-}; (4,8); (6,12) \ar@{-};
(   -44 ,   78  )      *=0{}="aa";
 (   -36 ,   78  )      *=0{}="ab";
  (   -32 ,   78  ) *=0{}="ac";
 (   -24 ,   78  )      *=0{}="ad";
  (   -40 ,   70  )      *=0{\bullet}="av1";
   (   -28 ,70  )   *=0{}="av2";
 (   -34 ,   58  )      *=0{\bullet}="av3";
  (   -34 ,   50  )      *=0{}="av4";
(   -30 ,   74  )      *=0{}="ava";
 (   -26 ,   74  )      *=0{}="avb";
  (   -38 ,   74  )*=0{\bullet}="ave";
 (   -42 ,   74  )      *=0{\bullet}="avf";
  (   -38 ,   66  )      *=0{}="avc";
(   -30 ,   66  )   *=0{\bullet}="avd";
 "aa" ;"avc" \ar@{-};"aa" ;"avc" \ar@{-};
 "ab" ;"av1" \ar@{-}  ;
  "ad" ;"avb" \ar@{-} \ar@{};
 "ave";"av1" \ar@{=};"ave";"av1" \ar@{=} ;
 "avf";"av1" \ar@{=} ;
 "av1" ;"av3" \ar@{=} ;
 "avd" ;"av3" \ar@{=} ;
 "av3" ;"av4" \ar@{=} \ar@{};
"avb";"av3" \ar@{-} ;
 "avb";"av3" \ar@{-} ;
 "avc" ;"av3" \ar@{-} ;
 "av3" ;"av4" \ar@{-} ;
 (   24  ,   78  )   *=0{}="ba";
(   32  ,   78  )   *=0{}="bb";
 (   36  ,   78  )   *=0{}="bc";
  (   44  ,   78  )   *=0{}="bd";
   (28  ,   70  )   *=0{}="bv1";
 (   40  ,   70  )   *=0{\bullet}="bv2";
  (   34  ,   58  )*=0{\bullet}="bv3";
 (   34  ,   50  )   *=0{}="bv4";
  (   38  ,   74  )   *=0{\bullet}="bva";
   (   42,   74  )   *=0{\bullet}="bvb";
 (   30  ,   74  )   *=0{}="bve";
  (   26  ,   74  )   *=0{}="bvf";
   (30  ,   66  )   *=0{\bullet}="bvc" ;
 (   38  ,   66  )   *=0{}="bvd" ;
 "ba" ;"bvc" \ar@{-};"ba" ;"bvc" \ar@{-};
  "bc" ;"bva" \ar@{-} ;
 "bd" ;"bvb" \ar@{-} \ar@{};
 "bva";"bv2" \ar@{=} ;"bva";"bv2" \ar@{=} ;
 "bvb";"bv2" \ar@{=} ;
 "bv2";"bv3" \ar@{=} ;
 "bvc" ;"bv3" \ar@{=} ;
 "bv3" ;"bv4" \ar@{=} \ar@{};
 "bva";"bv2" \ar@{-} ;"bva";"bv2" \ar@{-} ;
 "bvb";"bv3" \ar@{-} ;
 "bvc" ;"bv3" \ar@{-} ;
 "bv3" ;"bv4" \ar@{-} ;
(   -78 ,   20  )            *=0{}="ca";
 (   -70 ,   20  )            *=0{}="cb";
  (   -66 ,   20  )*=0{}="cc";
 (   -58 ,   20  )            *=0{}="cd";
  (   -74 ,   12  )*=0{\bullet}="cv1";
 (   -62 ,   12  )            *=0{}="cv2";
  (   -68 ,   0   )*=0{\bullet}="cv3";
 (   -68 ,   -8  )   *=0{}="cv4";
  (   -64 ,   16  )            *=0{}="cva";
   (-60 ,   16  )            *=0{}="cvb";
 (   -72 ,   16  )      *=0{}="cve";
  (   -76 ,   16  )*=0{}="cvf";
 (   -72 ,   8   )   *=0{\bullet}="cvc" ;
  (   -64 ,   8   )*=0{\bullet}="cvd" ;
 "ca" ;"cv1" \ar@{-};"ca" ;"cv1" \ar@{-};
 "cv1" ;"cvc" \ar@{-};
 "cb" ;"cv1" \ar@{-}  ;
  "cd" ;"cvb" \ar@{-} \ar@{};
 "cvc" ;"cv3" \ar@{=} ;
 "cvc" ;"cv3" \ar@{=} ;
 "cvd" ;"cv3" \ar@{=} ;
 "cv3" ;"cv4" \ar@{=} \ar@{};
"cvb";"cv3" \ar@{-} ;
 "cvb";"cv3" \ar@{-} ;
 "cvc" ;"cv3" \ar@{-} ;
 "cv3" ;"cv4" \ar@{-} ;
(   58  ,   20  )      *=0{}="da";
 (   66  ,   20  )      *=0{}="db";
  (   70  ,   20  )*=0{}="dc";
 (   78  ,   20  )      *=0{}="dd";
  (   62  ,   12  )      *=0{}="dv1";
   (   74  ,   12)   *=0{\bullet}="dv2";
 (   68  ,   0   )      *=0{\bullet}="dv3";
  (   68  ,   -8  )*=0{}="dv4";
 (   72  ,   16  )      *=0{}="dva";
  (   76  ,   16  )      *=0{}="dvb";
   (   64  ,   16)      *=0{}="dve";
 (   60  ,   16  )      *=0{}="dvf";
  (   64  ,   8   )      *=0{\bullet}="dvc" ;
(   72  ,   8   )   *=0{\bullet}="dvd" ;
 "da" ;"dvc" \ar@{-};"da" ;"dvc" \ar@{-};
  "dc" ;"dva" \ar@{-} ;
 "dd" ;"dvb" \ar@{-} \ar@{};
"dvd" ;"dv3" \ar@{=} ;"dvd" ;"dv3" \ar@{=} ;
 "dvc" ;"dv3" \ar@{=} ;
 "dv3" ;"dv4" \ar@{=} \ar@{};
 "dva";"dv2" \ar@{-} ;"dva";"dv2" \ar@{-} ;
 "dvb";"dv2" \ar@{-} ;
 "dv2";"dvd" \ar@{-} ;
  "dvd";"dv3" \ar@{-} ;
 "dvc" ;"dv3" \ar@{-} ;
 "dv3" ;"dv4" \ar@{-} ;
(   -58 ,   -48 )   *=0{}="ea";
 (   -50 ,   -48 )   *=0{}="eb";
  (   -46 ,   -48 )   *=0{}="ec";
   (-38 ,   -48 )   *=0{}="ed";
 (   -54 ,   -56 )   *=0{\bullet}="ev1";
  (   -42 ,   -56 )*=0{}="ev2";
 (   -48 ,   -68 )   *=0{\bullet}="ev3";
  (   -48 ,   -76 )   *=0{}="ev4";
   (   -48 ,-72 )   *=0{\bullet}="ev5";
 (   -44 ,   -52 )   *=0{}="eva";
  (   -40 ,   -52 )   *=0{}="evb";
   (-52 ,   -52 )   *=0{}="eve";
 (   -56 ,   -52 )   *=0{}="evf";
  (   -52 ,   -60 )   *=0{}="evc" ;
   (-44 ,   -60 )   *=0{}="evd" ;
 "ea" ;"evc" \ar@{-};"ea" ;"evc" \ar@{-};
 "eb" ;"ev1" \ar@{-}  ;
  "ed" ;"evb" \ar@{-} \ar@{};
   "ev5" ;"ev4" \ar@{=}; "ev5" ;"ev4" \ar@{=} \ar@{};
"evb";"ev3" \ar@{-} ;
 "evb";"ev3" \ar@{-} ;
 "evc" ;"ev1" \ar@{-} ;
 "ev1" ;"ev3" \ar@{-} ;
 "ev3" ;"ev5" \ar@{-} ;
 "ev5" ;"ev4" \ar@{-} ;
  (   38  ,   -48 )   *=0{}="fa";
(   46  ,   -48 )   *=0{}="fb";
 (   50  ,   -48 )   *=0{}="fc";
  (   58  ,   -48 )   *=0{}="fd";
   (42  ,   -56 )   *=0{}="fv1";
 (   54  ,   -56 )   *=0{\bullet}="fv2";
  (   48  ,   -68 )*=0{\bullet}="fv3";
 (   48  ,   -76 )   *=0{}="fv4";
  (   48  ,   -72 )   *=0{\bullet}="fv5";
   (   52,   -52 )   *=0{}="fva";
 (   56  ,   -52 )   *=0{}="fvb";
  (   44  ,   -52 )   *=0{}="fve";
   (   40,   -52 )   *=0{}="fvf";
 (   44  ,   -60 )   *=0{}="fvc" ;
  (   52  ,   -60 )   *=0{}="fvd" ;
 "fa" ;"fvc" \ar@{-};"fa" ;"fvc" \ar@{-};
  "fc" ;"fva" \ar@{-} ;
 "fd" ;"fvb" \ar@{-} \ar@{};
  "fv5" ;"fv4" \ar@{=}; "fv5" ;"fv4" \ar@{=} \ar@{};
 "fva";"fv2" \ar@{-} ;"fva";"fv2" \ar@{-} ;
  "fvb";"fv2" \ar@{-} ;
 "fv2";"fv3" \ar@{-} ;
 "fvc" ;"fv1" \ar@{-} ;
 "fv1" ;"fv3" \ar@{-} ;
 "fv3" ;"fv5" \ar@{-} ;
 "fv5" ;"fv4" \ar@{-} ;
 (-8,66)*=0{}="a"; (8,66)*=0{}="b";
 (0,66)*=0{}="c";
  (-4,62)*=0{\bullet}="va";
  (0,62)*=0{\bullet}="vc";
 (4,62)*=0{\bullet}="vb";
(0,58)*=0{\bullet}="v1";
  (0,50)*=0{}="r1";
"a" ;"v1" \ar@{-}; "a" ;"v1" \ar@{-};
  "c" ; "v1" \ar@{-};
 "b" ;"v1" \ar@{-};
 "v1" ;"r1" \ar@{-};
 "va" ;"v1" \ar@{=}; "va" ;"v1" \ar@{=};
 "vb" ;"v1" \ar@{=};
 "vc" ; "v1" \ar@{=};
 "v1" ;"r1" \ar@{-};
(45,43)*=0{}="a"; (61,43)*=0{}="b";
 (53,43)*=0{}="c";
  (49,39)*=0{\bullet}="va";
 (57,39)*=0{\bullet}="vb";
(53,35)*=0{\bullet}="v1";
  (53,27)*=0{}="r1";
"a" ;"v1" \ar@{-}; "a" ;"v1" \ar@{-};
  "c" ; "vb" \ar@{-};
 "b" ;"v1" \ar@{-};
 "v1" ;"r1" \ar@{-};
 "va" ;"v1" \ar@{=}; "va" ;"v1" \ar@{=};
 "vb" ;"v1" \ar@{=};
 "v1" ;"r1" \ar@{-};
(-45,43)*=0{}="a";
 (-61,43)*=0{}="b";
 (-53,43)*=0{}="c";
  (-49,39)*=0{\bullet}="va";
 (-57,39)*=0{\bullet}="vb";
(-53,35)*=0{\bullet}="v1";
  (-53,27)*=0{}="r1";
"a" ;"v1" \ar@{-}; "a" ;"v1" \ar@{-};
  "c" ; "vb" \ar@{-};
 "b" ;"v1" \ar@{-};
 "v1" ;"r1" \ar@{-};
 "va" ;"v1" \ar@{=}; "va" ;"v1" \ar@{=};
 "vb" ;"v1" \ar@{=};
 "v1" ;"r1" \ar@{-};
 (-6,-50)*=0{}="a"; (6,-50)*=0{}="b";
 (0,-50)*=0{}="c";
    (0,-54)*=0{\bullet}="vc";
 (0,-58)*=0{\bullet}="v1";
  (0,-66)*=0{}="r1";
"a" ;"vc" \ar@{-}; "a" ;"vc" \ar@{-};
  "c" ; "vc" \ar@{-};
 "b" ;"vc" \ar@{-};
 "v1" ;"r1" \ar@{-};
  \ar@{-};
 "vc" ; "v1" \ar@{=};
 "v1" ;"r1" \ar@{-};
(49,-19)*=0{}="a"; (65,-19)*=0{}="b";
 (57,-19)*=0{}="c";
  (53,-23)*=0{}="va";
   (61,-23)*=0{\bullet}="vb";
(57,-27)*=0{\bullet}="v1";
  (57,-35)*=0{}="r1";
"a" ;"v1" \ar@{-}; "a" ;"v1" \ar@{-};
  "c" ; "vb" \ar@{-};
 "b" ;"v1" \ar@{-};
 "v1" ;"r1" \ar@{-};
 "va" ;"v1" \ar@{-}; "va" ;"v1" \ar@{-};
 "vb" ;"v1" \ar@{=};
  "v1" ;"r1" \ar@{-};
(-49,-19)*=0{}="a"; (-65,-19)*=0{}="b";
 (-57,-19)*=0{}="c";
  (-53,-23)*=0{}="va";
   (-61,-23)*=0{\bullet}="vb";
(-57,-27)*=0{\bullet}="v1";
  (-57,-35)*=0{}="r1";
"a" ;"v1" \ar@{-}; "a" ;"v1" \ar@{-};
  "c" ; "vb" \ar@{-};
 "b" ;"v1" \ar@{-};
 "v1" ;"r1" \ar@{-};
 "va" ;"v1" \ar@{-}; "va" ;"v1" \ar@{-};
 "vb" ;"v1" \ar@{=};
  "v1" ;"r1" \ar@{-};
 \endxy
$$
\end{small}
\end{example}
\section{Vertex Combinatorics}
Now for a new result about the counting of the binary painted trees with $n$ leaves.
\begin{theorem}
 The number of vertices $a_n$ of the $n^{th}$ multiplihedron is given recursively by:
$$
a_n = C(n-1) +  \sum_{i=1}^{n-1}a_ia_{n-i}
$$
where $a_0 = 0$ and $C(n-1)$ are the Catalan numbers, which count binary (unpainted) trees as well
as the vertices of the associahedron.
\end{theorem}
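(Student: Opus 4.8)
The plan is to set up a bijective decomposition of the set of binary painted trees with $n$ leaves, which (as recalled above) index the vertices of $\cal{J}(n)$. First I would record the two trivial facts that anchor the recursion: there is no painted tree with zero leaves, so $a_0 = 0$ is only a bookkeeping convention, and the unique binary painted tree with one leaf is the painted corolla, so $a_1 = 1 = C(0)$, which is exactly the $n=1$ instance of the asserted formula (the sum being empty).

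For $n \geq 2$ the key observation is to look at the \emph{root node} of a binary painted tree $t$, i.e. the unique internal node incident to the root edge. Since the root edge is always painted, this node cannot have type (1) (the all-unpainted trivalent node); by the node classification of Section 2 it must have type (2) (painted trivalent) or type (3) (the bivalent change-of-color node). This dichotomy partitions the binary painted trees with $n$ leaves into two disjoint classes, and I claim these account for the two summands. If the root node has type (3), then by definition its upper edge is unpainted, and connectedness of the painted region forces every edge above it to be unpainted as well; hence $t$ is obtained by attaching a type (3) node (and a painted root edge) below an arbitrary \emph{unpainted} binary tree with $n$ leaves, and this assignment is a bijection onto unpainted binary trees, of which there are $C(n-1)$. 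If the root node has type (2), then painting proceeds up both of its two branches, so each branch carries a genuine binary painted subtree, say with $i$ and $n-i$ leaves, where $1 \le i \le n-1$ because each branch has at least one leaf and the two leaf counts sum to $n$; reading off the ordered pair of branch subtrees gives a bijection between this class and the set of ordered pairs (painted tree with $i$ leaves, painted tree with $n-i$ leaves), $1 \le i \le n-1$, which has cardinality $\sum_{i=1}^{n-1} a_i a_{n-i}$. Adding the two disjoint contributions gives $a_n = C(n-1) + \sum_{i=1}^{n-1} a_i a_{n-i}$.

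The only point that genuinely needs care — the "main obstacle," though it is diligence rather than depth — is verifying that the cut-and-regraft operations in both cases stay inside the class of \emph{valid} binary painted trees. In the type (2) case one must check that each branch subtree still satisfies all three painting constraints (connected painted region, no painting ending at a trivalent node, painting proceeding up both branches of each trivalent node), using that the intersection of two subtrees of a tree is again a subtree, and conversely that grafting two valid painted subtrees onto a fresh type (2) node (with a new painted root edge) reproduces a valid painted tree; in the type (3) case one must check that "everything above is unpainted" really does follow from connectedness, and that the reverse grafting is valid. Each verification is immediate from the local node classification, and the two operations are manifestly mutually inverse. As a sanity check the recursion yields $a_1, a_2, a_3, a_4 = 1, 2, 6, 21$, the last matching the $21$ vertices visible in Figure 1; equivalently, in generating-function form the identity becomes $A(x) = \sum_{n\ge 1} C(n-1)x^n + A(x)^2$, which could serve as an alternative phrasing of the same argument.
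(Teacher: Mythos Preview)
Your proof is correct and follows essentially the same decomposition as the paper: both split binary painted trees according to whether the node just above the root is the bivalent type (3) node (equivalently, whether the lowest trivalent node is unpainted), yielding the $C(n-1)$ term, or is a painted trivalent type (2) node, yielding the convolution $\sum a_i a_{n-i}$. Your write-up is more explicit about why these two cases are exhaustive and mutually exclusive and about why the branch subtrees in the type (2) case are themselves valid painted trees, but the underlying argument is the same.
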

\begin{proof}
The Catalan numbers $C(n-1)$ count those vertices which correspond to the painted binary trees with
$n$ leaves which have only the root painted, that is only nodes of type (1) and (3). Now we count
the trees for which the initial (lowest) trivalent node is painted (type (2)). Each of these
consists of a choice of two painted binary subtrees whose root is the initial painted node, and
whose leaves must sum to $n.$ Thus we sum over the ways that $n$ can be split into two natural
numbers.
\end{proof}
\begin{remark}
This formula gives the sequence which begins: $$0,1,2,6,21,80,322,1348,5814\dots.$$ It is sequence
A121988 of the On-line Encyclopedia of integer sequences. The recursive formula  above yields the
equation
$$A(x) = xc(x)+ (A(x))^2$$
where $A(x)$ is the ordinary generating function of the sequence $a_n$ above and $c(x)$ is the
generating function for the Catalan numbers $C(n).$ (So $xc(x)$ is the generating function for the
sequence $\{C(n-1)\}_{n=0}^{\infty}.$) Recall that $c(x) = \frac{1-\sqrt{1-4x}}{2x}.$ Thus by use
of the quadratic formula we have
 $$A(x) = \frac{1-\sqrt{2\sqrt{1-4x}-1}}{2}.$$

It is not hard to check that therefore
 $A(x) = xc(x)c(xc(x)).$
 The Catalan transform of a sequence $b_n$ with generating function $B(x)$ is defined in
 \cite{Barry} as the sequence with generating function $B(xc(x)).$ Since $xc(x)$ is the
 generating function of $C(n-1)$ then the number of vertices of the $n^{th}$ multiplihedron
 is given by the Catalan transform of the  Catalan numbers $C(n-1).$  Thus the theorems of
 \cite{Barry} apply, for instance: a formula for the number of vertices is given by
 $$a_n = \frac{1}{n}\sum_{k=1}^n{2n-k-1 \choose n-1}{2k-2 \choose k-1}~;~ a_0 = 0.$$

 We note that $A(x) = B(x)c(B(x))$ for $B(x) = xc(x).$ It may be that taking a generating function $B(x)$ to
 the new one given by
 $B(x)c(B(x))$ is the definition of
 a new kind of
 Catalan transform that would be interesting to study in its own right.
\end{remark}

\section{An algorithm for the extremal points}

In \cite{loday} Loday gives an algorithm for taking the binary trees with $n$ leaves and finding
for each an extremal point in {\bf R}$^{n-1}$; together whose convex hull is ${\cal K}(n),$ the
$(n-2)$-dimensional associahedron. Note that Loday writes formulas with the convention that the
number of leaves is $n+1,$ where we instead always use $n$ to refer to the number of leaves. Given
a (non-painted) binary $n$-leaved tree $t,$ Loday arrives at a point $M(t)$ in {\bf R}$^{n-1}$ by
calculating a coordinate from each trivalent node. These are ordered left to right based upon the
ordering of the leaves from left to right. Following Loday we number the leaves $0,1,\dots,n-1 $
and the nodes $1,2,\dots,n-1.$ The $i^{th}$ node is ``between'' leaf $i-1$ and leaf $i$ where
``between'' might be described to mean that a rain drop falling between those leaves would be
caught at that node. Each trivalent node has a left and right branch, which each support a subtree.
To find the Loday coordinate for the  $i^{th}$ node we take the product of the number of leaves of
the left subtree ($l_i$) and the number of leaves of the right subtree ($r_i$) for that node. Thus
$M(t) = (x_1, \dots x_{n-1})$ where $x_i = l_ir_i$. Loday proves that the convex hull of the points
thus calculated for all $n$-leaved binary trees  is the $n^{th}$ associahedron. He also shows that
the points thus calculated all lie in the $n-2$ dimensional affine hyperplane $H$ given by the
equation $x_1+\dots+x_{n-1} = S(n-1) = {1\over 2}n(n-1).$

We adjust Loday's algorithm to apply to painted binary trees as described above, with only nodes of
type (1), (2), and (3), by choosing a number $q \in (0,1).$ Then given a painted binary tree $t$
with $n$ leaves we calculate a point $M_q(t)$ in {\bf R}$^{n-1}$ as follows: we begin by finding
the coordinate for each trivalent node from left to right given by Loday's algorithm, but if the
node is of type (1) (unpainted, or colored by the domain) then its new coordinate is found by
further multiplying its Loday coordinate by $q$.  Thus
$$M_q(t) = (x_1, \dots x_{n-1}) \text{ where } x_i = \begin{cases}
    ql_ir_i,& \text{if node $i$ is type (1)} \\
    l_ir_i,& \text{if node $i$ is type (2).}
\end{cases} $$
Note that whenever we speak of the numbered nodes ($1,\dots, n-1$  from left to right) of a binary
tree, we are referring only to the trivalent nodes, of type (1) or (2). For an example, let us
calculate the point in {\bf R}$^3$ which corresponds to the 4-leaved tree:

\begin{small}
 $$ t =
\xy  0;/r.25pc/:
  (-10,20)*=0{}="a"; (-2,20)*=0{}="b";
  (2,20)*=0{}="c"; (10,20)*=0{}="d";
  (-6,12)*=0{\bullet}="v1"; (6,12)*=0{\bullet}="v2";
  (0,0)*=0{\bullet}="v3";
  (0,-7)*=0{}="v4";
  (4,16)*=0{\bullet}="va";
  (8,16)*=0{\bullet}="vb";
  (-4,8)*=0{\bullet}="vc" \ar@{};
 "a" ;"v1" \ar@{-};"a" ;"v1" \ar@{-};
 "v1" ;"vc" \ar@{-};
 "b" ;"v1" \ar@{-}  ;
 "c" ;"va" \ar@{-} ;
 "d" ;"vb" \ar@{-} \ar@{};
 "va";"v2" \ar@{=} ;"va";"v2" \ar@{=} ;
 "vb";"v2" \ar@{=} ;
 "v2" ;"v3" \ar@{=};
 "vc" ;"v3" \ar@{=} ;
 "v3" ;"v4" \ar@{=} \ar@{};
 %
 %
 "va";"v2" \ar@{-} ;"va";"v2" \ar@{-} ;
 "vb";"v3" \ar@{-} ;
 "vc" ;"v3" \ar@{-} ;
 "v3" ;"v4" \ar@{-} ;
 \endxy
$$
\end{small}

Now $M_q(t) = (q, 4, 1). $
\begin{theorem}\label{main} The convex hull of all the resulting points $M_q(t)$ for $t$ in the set of $n$-leaved binary
painted trees is the $n^{th}$ multiplihedron. That is, our convex hull is combinatorially
equivalent to the CW-complex ${\cal J}(n)$ defined by Iwase and Mimura, and is homeomorphic to the
space of level (painted) trees defined by Boardman and Vogt.
\end{theorem}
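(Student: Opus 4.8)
The plan is to prove the theorem by induction on $n$, showing simultaneously that the convex hull $P_q(n) := \operatorname{conv}\{M_q(t) : t \text{ an } n\text{-leaved binary painted tree}\}$ is a polytope of the right dimension, that its faces are indexed by painted trees in a way compatible with the refinement order, and that this face poset matches the recursive CW-structure of ${\cal J}(n)$ from Section 2. First I would establish the ambient geometry: all points $M_q(t)$ lie in the hyperplane $H$ given by $\sum_{i=1}^{n-1} x_i = $ (a $q$-weighted analogue of Loday's $S(n-1)$), so $P_q(n)$ is at most $(n-2)$-dimensional; a short separate argument (exhibiting $n-1$ affinely independent tree-points, or deforming Loday's associahedron points which appear as the $q\to 1$ limit) shows the dimension is exactly $n-2$. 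This reduces the problem to identifying the boundary.

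The core of the argument is to produce, for each upper tree $u(t;r_1,\dots,r_t)$ and each lower tree $l(k,s)$ of Definition~\ref{ufacets} and Definition~\ref{lfacets}, an explicit linear functional that is minimized on $P_q(n)$ exactly at the set of $M_q(t')$ with $t'$ a binary refinement of that upper (resp. lower) tree, and to show these are the only facets. The right functionals to try are sums of consecutive coordinates: for a lower facet one uses $\sum_{i \in G} x_i$ over the block $G$ of node-indices corresponding to the $s$ chosen consecutive leaves, and for an upper facet one uses a complementary sum, reflecting the fact that an unpainted edge supporting $s$ leaves forces the product $l_ir_i$ structure inside that block to be minimal (a sub-associahedron) while a painted node at the root of a $t$-fold grafting forces the outer coordinates to be minimal (a sub-multiplihedron in each factor). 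The key computational lemma is a "splitting" identity: if $t'$ has a distinguished internal edge $e$ (painted or not) cutting off a subtree on leaves $[a,b]$, then the coordinates of $M_q(t')$ split as a shift of the coordinates of the two smaller tree-points coming from the subtree and the quotient tree, with the shift depending only on $e$, $q$, and the leaf-counts — this is the geometric incarnation of the bimodule/grafting structure. Granting this, the face of $P_q(n)$ cut out by the chosen functional is affinely isomorphic to a product ${\cal J}(r)\times{\cal K}(s)$ (lower) or ${\cal K}(t)\times{\cal J}(r_1)\times\cdots\times{\cal J}(r_t)$ (upper), matching Definitions~\ref{ufacets}, \ref{lfacets} exactly, and by induction each such factor is already known to be the correct polytope.

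With the facets identified, the remaining step is to check that the combinatorial gluing of these facets along codimension-two faces agrees with the recursive Definition in Section 2 — i.e. that two facet-functionals are simultaneously minimized precisely on those $M_q(t')$ whose tree refines the common minimal refinement of the two corresponding facet-trees. Because every face of $P_q(n)$ is an intersection of facets, and each facet has (inductively) the face poset of its painted tree, this identifies the whole face poset of $P_q(n)$ with the poset of $n$-leaved painted trees under refinement, which by the Remark after the Section 2 definitions is the face poset of ${\cal J}(n)$; combinatorial equivalence follows. The homeomorphism with the Boardman–Vogt space of level painted trees is then obtained, as the theorem statement and the introduction indicate, via the separate construction in Section 5 (proof (2) of Lemma~\ref{ball}), where a space of level trees is shown directly homeomorphic to the convex hull — so I would simply invoke that.

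The step I expect to be the main obstacle is proving that the listed functionals give \emph{all} the facets and that no spurious lower-dimensional faces appear — equivalently, that $P_q(n)$ has exactly $\frac{n(n-1)}{2} + 2^{n-1} - 1$ facets with no coincidences or extra supporting hyperplanes. Loday's proof for the associahedron handles the analogous point by a clean inductive count together with the observation that the candidate hyperplanes are in "general enough" position; here the presence of the parameter $q$ and the two node types makes the bookkeeping heavier, and one must be careful that distinct painted trees give distinct points $M_q(t)$ (injectivity of $M_q$ on binary painted trees, which should follow from recovering the tree shape from the coordinate pattern, using that $q \in (0,1)$ keeps the two cases numerically separated) and that no painted tree's point accidentally lies on a facet it shouldn't. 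I would isolate this as the key lemma and prove it by the same induction, leveraging the splitting identity to reduce boundary membership of $M_q(t')$ to boundary membership in the smaller factors.
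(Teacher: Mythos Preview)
Your proposal has the right overall architecture---induction, explicit facet functionals, a splitting identity coming from grafting, and matching the face poset---but there is one concrete error and one missing key idea.

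The dimension claim is wrong: the points $M_q(t)$ do \emph{not} all lie in a single affine hyperplane. For a fully painted binary tree one has $\sum_i x_i = \frac{1}{2}n(n-1)$, while for a binary tree with only the root painted $\sum_i x_i = q\cdot\frac{1}{2}n(n-1)$; these differ since $q\in(0,1)$. Hence $P_q(n)$ is full-dimensional in $\mathbf{R}^{n-1}$, i.e.\ $(n-1)$-dimensional, not $(n-2)$-dimensional (consistent with $\cal{J}(n)$ being a subdivision of $\cal{K}(n)\times I$). In fact the two hyperplanes $\sum x_i = S(n-1)$ and $\sum x_i = qS(n-1)$ are themselves facet-supporting, corresponding to the extreme upper tree $u(n;1,\dots,1)$ and the extreme lower tree $l(1,n)$. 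Your description of the upper-facet functionals is also off: they are not complementary block sums but sums $x_{r_1}+x_{r_1+r_2}+\cdots+x_{r_1+\cdots+r_{t-1}}$ over the node-indices lying \emph{between} the grafted corollas, and these are bounded \emph{above} (not minimized) on $P_q(n)$.

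More seriously, the induction as you describe it does not close. When you collapse a subtree on $s$ consecutive leaves and look at the remaining $r$-leaved quotient tree, the coordinates coming from the quotient are not those of the unweighted map $M_q$ applied to an $r$-leaved painted tree: the collapsed branch now contributes $s$ leaves' worth of mass to every product $l_ir_i$ in which it participates. The paper resolves this by proving the stronger statement for a \emph{weighted} algorithm $M_q^{w_0,\dots,w_{n-1}}$, in which leaf $i$ carries a positive integer weight $w_i$ and one uses weight-sums $L_i,R_i$ in place of leaf-counts; the induction runs over this weighted version, and Theorem~\ref{main} is recovered by setting all $w_i=1$. Without introducing these weights your splitting identity cannot identify a lower (resp.\ upper) facet with a genuine product $\cal{J}(r)\times\cal{K}(s)$ (resp.\ $\cal{K}(t)\times\cal{J}(r_1)\times\cdots\times\cal{J}(r_t)$) in the inductive hypothesis. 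Once the weighted version is in place, the paper's Lemmas~\ref{low} and~\ref{up} give exactly the facet characterization you were aiming for, and the concern you flag at the end---that the listed hyperplanes exhaust the facets---is handled automatically: each facet has the right dimension $n-2$ by the induction, and every binary painted tree refines some upper or lower tree, so every $M_q(t)$ is a vertex of some facet and hence of the hull.
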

The proof will follow in section~\ref{proofsec}.
\begin{example}
\end{example}
Here are all the painted binary trees with 3 leaves, together with their points $M_q(t)\in${\bf
R}$^2.$
\begin{small}
\begin{tabular}{llllll}
&$ M_q\left(  \xy  0;/r.15pc/:
  (-10,14)*{\txt{\\\\}}="aa";
  (-2,14)*{}="ab";
  (2,14)*{}="ac";
  (10,14)*{}="ad";
  (-6,6)*=0{\bullet}="av1";
  (6,6)*=0{}="av2";
  (0,-6)*=0{\bullet}="av3";
  (0,-13)*=0{}="av4";
  (4,10)*=0{}="ava";
  (8,10)*=0{}="avb";
  (-4,10)*=0{\bullet}="ave";
  (-8,10)*=0{\bullet}="avf";
  (-4,2)*=0{}="avc" ;
  (4,2)*=0{\bullet}="avd" \ar@{};
 "aa" ;"avc" \ar@{-};"aa" ;"avc" \ar@{-};
 "ab" ;"av1" \ar@{-}  ;
  "ad" ;"avb" \ar@{-} \ar@{};
 "ave";"av1" \ar@{=} ;"ave";"av1" \ar@{=} ;
 "avf";"avc" \ar@{=} ;
 "avc" ;"av3" \ar@{=} ;
 "avd" ;"av3" \ar@{=} ;
 "av3" ;"av4" \ar@{=} \ar@{};
"avb";"av3" \ar@{-} ;
 "avb";"av3" \ar@{-} ;
 "avc" ;"av3" \ar@{-} ;
 "av3" ;"av4" \ar@{-} ;
 \endxy
\right) = (1,2),$ &  $M_q\left( \xy  0;/r.15pc/:
  (-10,14)*{\txt{\\\\}}="ba";
  (-2,14)*=0{}="bb";
  (2,14)*=0{}="bc";
  (10,14)*=0{}="bd";
  (-6,6)*=0{}="bv1";
  (6,6)*=0{\bullet}="bv2";
  (0,-6)*=0{\bullet}="bv3";
  (0,-13)*{}="bv4";
  (4,10)*=0{\bullet}="bva";
  (8,10)*=0{\bullet}="bvb";
  (-4,10)*=0{}="bve";
  (-8,10)*=0{}="bvf";
  (-4,2)*=0{\bullet}="bvc" ;
  (4,2)*=0{}="bvd" \ar@{};
 "ba" ;"bvc" \ar@{-};"ba" ;"bvc" \ar@{-};
  "bc" ;"bva" \ar@{-} ;
 "bd" ;"bvb" \ar@{-} \ar@{};
 "bva";"bv2" \ar@{=} ;"bva";"bv2" \ar@{=} ;
 "bvb";"bv3" \ar@{=} ;
 "bvc" ;"bv3" \ar@{=} ;
 "bv3" ;"bv4" \ar@{=} \ar@{};
 "bva";"bv2" \ar@{-} ;"bva";"bv2" \ar@{-} ;
 "bvb";"bv3" \ar@{-} ;
 "bvc" ;"bv3" \ar@{-} ;
 "bv3" ;"bv4" \ar@{-} ;
 \endxy\right) = (2,1)$ \\
   $M_q\left( \xy  0;/r.15pc/:
  (-10,14)*{\txt{\\\\}}="ca";
  (-2,14)*=0{}="cb";
  (2,14)*=0{}="cc";
  (10,14)*=0{}="cd";
  (-6,6)*=0{\bullet}="cv1";
  (6,6)*=0{}="cv2";
  (0,-6)*=0{\bullet}="cv3";
  (0,-13)*{}="cv4";
  (4,10)*=0{}="cva";
  (8,10)*=0{}="cvb";
  (-4,10)*=0{}="cve";
  (-8,10)*=0{}="cvf";
  (-4,2)*=0{\bullet}="cvc" ;
  (4,2)*=0{\bullet}="cvd" \ar@{};
 "ca" ;"cvc" \ar@{-};"ca" ;"cvc" \ar@{-};
 "cb" ;"cv1" \ar@{-}  ;
  "cd" ;"cvb" \ar@{-} \ar@{};
 "cvc" ;"cv3" \ar@{=} ;
 "cvc" ;"cv3" \ar@{=} ;
 "cvd" ;"cv3" \ar@{=} ;
 "cv3" ;"cv4" \ar@{=} \ar@{};
"cvb";"cv3" \ar@{-} ;
 "cvb";"cv3" \ar@{-} ;
 "cvc" ;"cv3" \ar@{-} ;
 "cv3" ;"cv4" \ar@{-} ;
 \endxy\right) = (q,2)$ &&&   $ M_q\left( \xy  0;/r.15pc/:
  (-10,14)*{\txt{\\\\}}="da";
  (-2,14)*=0{}="db";
  (2,14)*=0{}="dc";
  (10,14)*=0{}="dd";
  (-6,6)*=0{}="dv1";
  (6,6)*=0{\bullet}="dv2";
  (0,-6)*=0{\bullet}="dv3";
  (0,-13)*{}="dv4";
  (4,10)*=0{}="dva";
  (8,10)*=0{}="dvb";
  (-4,10)*=0{}="dve";
  (-8,10)*=0{}="dvf";
  (-4,2)*=0{\bullet}="dvc" ;
  (4,2)*=0{\bullet}="dvd" \ar@{};
 "da" ;"dvc" \ar@{-};"da" ;"dvc" \ar@{-};
  "dc" ;"dva" \ar@{-} ;
 "dd" ;"dvb" \ar@{-} \ar@{};
"dvd" ;"dv3" \ar@{=} ;"dvd" ;"dv3" \ar@{=} ;
 "dvc" ;"dv3" \ar@{=} ;
 "dv3" ;"dv4" \ar@{=} \ar@{};
 "dva";"dv2" \ar@{-} ;"dva";"dv2" \ar@{-} ;
 "dvb";"dv3" \ar@{-} ;
 "dvc" ;"dv3" \ar@{-} ;
 "dv3" ;"dv4" \ar@{-} ;
 \endxy\right) = (2,q)$ \\
 &   $ M_q\left(\xy  0;/r.15pc/:
  (-10,14)*{\txt{\\\\}}="ea";
  (-2,14)*=0{}="eb";
  (2,14)*=0{}="ec";
  (10,14)*=0{}="ed";
  (-6,6)*=0{\bullet}="ev1";
  (6,6)*=0{}="ev2";
  (0,-6)*=0{\bullet}="ev3";
  (0,-14)*{}="ev4";
  (0,-10)*{\bullet}="ev5";
  (4,10)*=0{}="eva";
  (8,10)*=0{}="evb";
  (-4,10)*=0{}="eve";
  (-8,10)*=0{}="evf";
  (-4,2)*=0{}="evc" ;
  (4,2)*=0{}="evd" \ar@{};
 "ea" ;"evc" \ar@{-};"ea" ;"evc" \ar@{-};
 "eb" ;"ev1" \ar@{-}  ;
  "ed" ;"evb" \ar@{-} \ar@{};
   "ev5" ;"ev4" \ar@{=}; "ev5" ;"ev4" \ar@{=} \ar@{};
"evb";"ev3" \ar@{-} ;
 "evb";"ev3" \ar@{-} ;
 "evc" ;"ev3" \ar@{-} ;
 "ev3" ;"ev4" \ar@{-} ;
 \endxy \right) = (q,2q),$ &  $      M_q\left( \xy  0;/r.15pc/:
  (-10,14)*{\txt{\\\\}}="fa";
  (-2,14)*=0{}="fb";
  (2,14)*=0{}="fc";
  (10,14)*=0{}="fd";
  (-6,6)*=0{}="fv1";
  (6,6)*=0{\bullet}="fv2";
  (0,-6)*=0{\bullet}="fv3";
  (0,-14)*{}="fv4";
  (0,-10)*{\bullet}="fv5";
  (4,10)*=0{}="fva";
  (8,10)*=0{}="fvb";
  (-4,10)*=0{}="fve";
  (-8,10)*=0{}="fvf";
  (-4,2)*=0{}="fvc" ;
  (4,2)*=0{}="fvd" \ar@{};
 "fa" ;"fvc" \ar@{-};"fa" ;"fvc" \ar@{-};
  "fc" ;"fva" \ar@{-} ;
 "fd" ;"fvb" \ar@{-} \ar@{};
  "fv5" ;"fv4" \ar@{=}; "fv5" ;"fv4" \ar@{=} \ar@{};
 "fva";"fv2" \ar@{-} ;"fva";"fv2" \ar@{-} ;
 "fvb";"fv3" \ar@{-} ;
 "fvc" ;"fv3" \ar@{-} ;
 "fv3" ;"fv4" \ar@{-} ;
 \endxy\right) = (2q,q)$ \\
\end{tabular}
\end{small}

$$$$
Thus for $q=\frac{1}{2}$ we have the six points
$\{(1,2),(2,1),(\frac{1}{2},2),(2,\frac{1}{2}),(\frac{1}{2},1),(1,\frac{1}{2})\}.$ Their convex
hull appears as follows:
\begin{small}
$$
\xy 0;/r3pc/: (0,0);(0,3) \ar@{.>} ; (0,0);(0,3) \ar@{.>} ; (0,0); (3,0) \ar@{-} ;
  (1,2);(2,1) \ar@{-};(1,2)*=0{\bullet};(2,1)*=0{\bullet} \ar@{-};
  (1,2);(.5,2)*=0{\bullet}\ar@{-};
  (.5,2);(.5,1)*=0{\bullet} \ar@{-};
  (.5,1);(1,.5)*=0{\bullet} \ar@{-};
  (1,.5);(2,.5)*=0{\bullet} \ar@{-};
  (2,.5);(2,1)*=0{\bullet} \ar@{-}; \ar@{};
  (0,1) *=0{\txt{\textbf{--}}};
  (0,2) *=0{\txt{\textbf{--}}};
  (1,0) *=0{|};
  (2,0) *=0{|};
  \endxy
$$
\end{small}
\begin{example}
\end{example}
The list of vertices for ${\cal J}(4)$ based on painted binary trees with 4 leaves, for $q= {1
\over 2},$ is:
\newline

\begin{tabular}{lllll}
 (1, 2 ,3)&
 (1/2 ,2 ,3)&
 (1/2 ,2/2 ,3)&
 (1/2, 2/2 ,3/2)\\
 (2, 1, 3)&
 (2 ,1/2 ,3)&
 (2/2 ,1/2 ,3)&
 (2/2, 1/2 ,3/2)\\
 (3 ,1 ,2)&
 (3, 1/2, 2)&
 (3 ,1/2 ,2/2)&
 (3/2, 1/2 ,2/2)\\
 (3, 2, 1)&
 (3 ,2, 1/2)&
 (3 ,2/2, 1/2)&
 (3/2 ,2/2 ,1/2)\\
 (1 ,4 ,1)&
 (1/2, 4, 1)&
 (1, 4, 1/2)&
 (1/2, 4, 1/2)&
 (1/2, 4/2 ,1/2)\\
\end{tabular}
\newline
These are suggestively listed as a table where the first column is made up of the coordinates
calculated by Loday for $\cal{K}(4)$, which here correspond to trees with every trivalent node
entirely painted. The rows may be found by applying the factor $q$ to each coordinate in turn, in
order of increasing size of those coordinates. Here is the convex hull of these points, where we
see that each row of the table corresponds to shortest paths from the big pentagon to the small
one. Of course sometimes there are multiple such paths.
\newline

\begin{small}
$$
\xy 0;/r1pc/:
   (0,0);(0,16) \ar@{.>} ; (0,0);(0,16) \ar@{.>} ; (0,0);(-15,-11.25)\ar@{.>}; (0,0); (22,0) \ar@{-} ;
      (-4,0)*=0{\bullet}="1a";
  (0,-2)*=0{\bullet}="1b";
  (6,1)*=0{\bullet}="1c";
  (2,4.418)*=0{\bullet}="1d";
  (-3,2.7)*=0{\bullet}="1e";
  (-10,-2)*=0{\bullet}="2a";
  (-10,-2)*=0{^{(3,1,2)}\hspace{-.5in}};
  (0,-7)*=0{\bullet}="2b";
  (0,-7)*=0{_{(3,2,1)}\hspace{-.5in}};
  (16,2)*=0{\bullet}="2c";
  (16,2)*=0{^{(1,4,1)}\hspace{.5in}};
  (5,10)*=0{\bullet}="2d";
  (5,10)*=0{^{(1,2,3)}\hspace{-.5in}};
  (-6,6)*=0{\bullet}="2e";
  (-6,6)*=0{_{_{(2,1,3)}}\hspace{-.5in}};
  (-14,-2)*=0{\bullet}="3a";
  (-14,-8)*=0{\bullet}="3b";
  (-10,-10)*=0{\bullet}="3c";
  (0,-10.4)*=0{\bullet}="3d";
  (16,-1.418)*=0{\bullet}="3e";
  (18.418,1)*=0{\bullet}="3f";
  (18.418,4.418)*=0{\bullet}="3g";
  (7.418,12.418)*=0{\bullet}="3h";
  (2,12.418)*=0{\bullet}="3i";
  (-3,11)*=0{\bullet}="3j";
  (-10,6)*=0{\bullet}="3k";
 (14.222,1)*{\hole}="x"; (14.222,1)*{\hole}="x";
(16,1)*{\hole}="y";
  (-3,7.091)*{\hole}="z";
 (2,8.909)*{\hole}="w";
 (-7.846,-3.077)*{\hole}="p";
  (-3.846,-5.077)*{\hole}="q";
  "1a";"p" \ar@{-}; "1a";"p"\ar@{-};
  "p";"3b"  \ar@{-};
    "1b";"q" \ar@{-};
  "q";"3c" \ar@{-};
 "1c";"x" \ar@{-};
 "x";"y" \ar@{-};
 "y";"3f" \ar@{-};
 "1d";"w" \ar@{-};
 "w";"3i" \ar@{-};
 "1e";"z" \ar@{-};
 "z";"3j" \ar@{-};
 "2a";"3a" \ar@{-};
 "2b";"3d" \ar@{-};
  "2c";"3e" \ar@{-};
  "2c";"3g" \ar@{-};
 "2d";"3h" \ar@{-};
 "2e";"3k" \ar@{-};
"1a";"1b" \ar@{-}; "1a";"1b" \ar@{-};
 "1b";"1c" \ar@{-};
 "1c";"1d" \ar@{-};
 "1d";"1e" \ar@{-};
 "1e";"1a" \ar@{-};
 "2a";"2b" \ar@{-};
 "2b";"2c" \ar@{-};
 "2c";"2d" \ar@{-};
 "2d";"2e" \ar@{-};
 "2e";"2a" \ar@{-};
 "3a";"3b" \ar@{-};
  "3b";"3c" \ar@{-};
  "3c";"3d" \ar@{-};
  "3d";"3e" \ar@{-};
  "3e";"3f" \ar@{-};
  "3f";"3g" \ar@{-};
  "3g";"3h" \ar@{-};
  "3h";"3i" \ar@{-};
  "3i";"3j" \ar@{-};
  "3j";"3k" \ar@{-};
  "3k";"3a" \ar@{-};
     \endxy
$$
\end{small}
\newline

The largest pentagonal facet of this picture corresponds to the bottom pentagonal facet in the
drawing of ${\cal J}(4)$ on page 53 of \cite{sta2}, and to the pentagonal facet labeled $d_{(0,1)}$
in the diagram of ${\cal J}(4)$ in section 5 of \cite{umble}. Just turn the page 90 degrees
clockwise to see the picture of $\cal{J}(4)$ that is in the introduction of this paper.

To see a rotatable version of the convex hull which is the fourth multiplihedron, enter the
following homogeneous coordinates into the Web Demo of polymake (with option visual), at
\url{http://www.math.tu-berlin.de/polymake/index.html#apps/polytope}. Indeed polymake was
instrumental in the experimental phase of this research \cite{poly}.

\begin{align*}
POINTS\\
 1 ~1~ 2~ 3\\
 1~ 1/2~ 2 ~3\\
 1 ~1/2~ 2/2~ 3\\
 1 ~1/2 ~2/2~ 3/2\\
 1~ 2~ 1~ 3\\
 1 ~2~ 1/2 ~3\\
 1~ 2/2~ 1/2~ 3\\
 1~ 2/2 ~1/2~ 3/2\\
 1~ 3~ 1~ 2\\
 1~ 3~ 1/2~ 2\\
 1~ 3~ 1/2 ~2/2\\
 1~ 3/2~ 1/2~ 2/2\\
 1~ 3~ 2~ 1\\
 1~ 3~ 2~ 1/2\\
 1~ 3~ 2/2~ 1/2\\
 1~ 3/2~ 2/2~ 1/2\\
 1~ 1~ 4~ 1\\
 1~ 1/2~ 4 ~1\\
 1~ 1~ 4~ 1/2\\
 1~ 1/2 ~4 ~1/2\\
 1~ 1/2 ~4/2~ 1/2\\
\end{align*}
\newpage

\section{Spaces of painted trees}

Boardman and Vogt develop several versions of the space of colored or painted trees with $n$ leaves
with different uses for proving specific theorems about $A_{\infty}$ maps. We choose to focus on
one version which has the advantage of reflecting the intuitive dimension of the multiplihedra. The
points of this space are based on the binary painted trees with the three types of nodes pictured
in the introduction. The leaves are always colored by the domain $X$ (here we say unpainted),  and
the root is always colored by the range, $Y$ (here we say painted).

To get a point of the space each interior edge of a given binary painted tree with $n$ leaves is
assigned a value in $[0,1].$ The result is called a \emph{painted metric tree}. When none of the
trivalent nodes are painted (that is, disallowing the second node type), and with the equivalence
relations we will review shortly, this will become the space $SM{\cal U}(n,1)$ as defined in
\cite{BV1}. Allowing all three types of nodes gives the space
 $$HW({\cal U} \otimes {\cal L }_1)(n^0,1^1).$$

 (In \cite{BV1} the superscripts denote the colors, so this denotes that there are $n$
 inputs colored ``0'' and one output colored ``1.'' This is potentially
 confusing since these numbers are also used for edge lengths, and so in this paper we will
 denote coloring with the shaded edges and reserve the values to denote edge lengths.)

 We want to consider the retract of this space to the \emph{level trees}, denoted in \cite{BV1}
 $$LW({\cal U} \otimes {\cal L }_1)(n^0,1^1).$$
 The definition in \cite{BV1} simply declares that a level tree is either a tree that has one or
 zero nodes, or a tree that decomposes into level trees. The authors then unpack the definition
  a bit to demonstrate
 that the effect of their recursive requirement is to ensure that the the space of 2-leaved level
 trees has dimension 1. They declare in general that their space of $n$-leaved level trees will
 have the expected form, that is, will be homeomorphic to a closed $(n-1)$-dimensional ball.
We give here a specific way to realize a space of trees satisfying the recursive requirement and
having the expected form. Again the requirement will ensure that a decomposition of level trees
will always be into level trees.

 We will denote our version of the space of level trees with $n$ leaves by $LW{\cal U}(n).$
  It is defined  in Definition~\ref{lw} as the space of painted metric trees, after introducing
 relations on the lengths of edges.
\begin{definition}
We first describe a space corresponding to each painted binary tree. We denote it $W(t).$
  Edge lengths can be chosen freely from $[0,1]$ subject to the
 following
 conditions. At each trivalent node of a tree $t$ there are two subtrees with their root that node. The left subtree is
  defined by the tree with its rooted edge the left-hand branch of that node and the right subtree is likewise supported by the righthand branch.
  The conditions are that for each node of type (2) we have an equation relating the painted interior edge lengths
 of the left subtree and the right subtree (interior with respect to the original $t$). Let $u_1 \dots u_k$ be the lengths of the painted interior
 edges of the left subtree  and let $v_1 \dots v_j$ be the painted lengths of the right subtree. Let $p_u$ be the number of leaves
 of the left subtree and let $p_v$ be the number of leaves of the right subtree. The
 equation to be obeyed is
  $$\frac{1}{p_u}\smash{\sum\limits_{i=1}^k u_i} = \frac{1}{p_v}\smash{\sum\limits_{i=1}^j
 v_i}.$$
\end{definition}
 For example consider the edge lengths $u,v,x,y,z \in [0,1]$ assigned to the following tree:
\begin{small}
 $$
\xy  0;/r.25pc/:
  (-10,20)*=0{}="a"; (-2,20)*=0{}="b";
  (2,20)*=0{}="c"; (10,20)*=0{}="d";
  (-6,12)*=0{\bullet}="v1"; (6,12)*=0{\bullet}="v2";
  (0,0)*=0{\bullet}="v3";
  (0,-7)*=0{}="v4";
  (4,16)*=0{\bullet}="va";
  (8,16)*=0{\bullet}="vb";
  (-4,8)*=0{\bullet}="vc" \ar@{};
 "a" ;"v1" \ar@{-};"a" ;"v1" \ar@{-}_{x};
 "v1" ;"vc" \ar@{-};
 "b" ;"v1" \ar@{-}  ;
 "c" ;"va" \ar@{-} ;
 "d" ;"vb" \ar@{-} \ar@{}_{y};
 "va";"v2" \ar@{=} ;"va";"v2" \ar@{=}^{z} ;
 "vb";"v2" \ar@{=}^v ;
 "v2" ;"v3" \ar@{=}_{u};
 "vc" ;"v3" \ar@{=} ;
 "v3" ;"v4" \ar@{=} \ar@{};
 %
 %
 "va";"v2" \ar@{-} ;"va";"v2" \ar@{-} ;
 "vb";"v3" \ar@{-} ;
 "vc" ;"v3" \ar@{-} ;
 "v3" ;"v4" \ar@{-} ;
 \endxy
$$
\end{small}

The relations on the lengths then are the equations:
 $$y=z  \text{ }\text{ }\text{ }\text{ }\text{ }\text{ and  }\text{ }\text{ }\text{ }\text{ }\text{ } \frac{1}{2}u=\frac{1}{2}(v+y+z).$$
Note that this will sometimes imply that  lengths of certain edges are forced to take values only
from $[0,p], p < 1.$ In \cite{BV1} the definition of the level trees is given by an inductive
property, which guarantees that decompositions of the trees will always be into level trees. This
seems equivalent to our requirement that the nodes be of types (1)-(6). The relations on edge
length serve to ensure that this requirement is preserved even as some edges go to zero.

Before describing how to glue together all these subspaces for different trees to create the entire
$LW{\cal U}(n)$ we show the following:
\begin{theorem}\label{dimtree}
The dimension of the subspace $W(t)$ of  $LW{\cal U}(n)$ corresponding to a given binary painted
tree is $n-1.$
\end{theorem}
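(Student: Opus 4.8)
The plan is to count degrees of freedom. A binary painted tree $t$ with $n$ leaves has $n-1$ trivalent nodes and hence $n-2$ interior edges (together with one root edge and $n$ leaf edges, which are exterior and carry no free parameter). So before imposing any relations we have $n-2$ edge lengths to choose freely from $[0,1]$, i.e. an $(n-2)$-dimensional parameter space. The claim is that after imposing the relations attached to the painted trivalent nodes we actually gain one dimension back — wait, that cannot be; so instead the correct bookkeeping must be that I have miscounted the free parameters, and the relations at type (2) nodes are not all independent constraints but in fact the recursive structure forces the count up to $n-1$. Let me restate the strategy cleanly: I will show $W(t)$ is cut out inside a cube of the appropriate dimension by the painting relations, and that the net dimension is $n-1$, by induction on $n$.

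First I would set up the induction. For $n=1$ the tree is a single type-(3) node with one leaf and one root edge, no interior edges, and $W(t)$ is a point; but the claimed dimension is $n-1=0$, consistent. For $n=2$ the binary painted trees are the type-(1)-over-type-(3) tree and the type-(2) tree; each has exactly one interior edge, freely chosen in $[0,1]$ (the type-(2) node imposes the relation $\frac{1}{p_u}\sum u_i = \frac{1}{p_v}\sum v_i$, but with single leaves on each side and no \emph{interior} painted edges in the subtrees the relation is vacuous, $0=0$), giving dimension $1=n-1$. For the inductive step I would split into two cases according to the lowest trivalent node. If the lowest node is type (1) (unpainted), then $t$ is obtained by grafting an unpainted binary tree structure: more precisely the two subtrees $t_L,t_R$ above the lowest node are painted binary trees with $p_L$ and $p_R$ leaves, $p_L+p_R=n$, there are no painting relations straddling this node (it is not type (2)), and the lowest interior edge — if present — contributes as in Loday's associahedron count; I would use $\dim W(t) = \dim W(t_L) + \dim W(t_R) + (\text{number of new interior edges at this node})$ and check the arithmetic gives $(p_L-1)+(p_R-1)+1 = n-1$.

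If the lowest node is type (2) (painted), then again $t$ decomposes as $t_L,t_R$ grafted at this node, but now there is the single scalar relation $\frac{1}{p_L}\sum u_i = \frac{1}{p_R}\sum v_i$ where the sums run over the painted interior edges of the two sides. The key point — and I expect this to be the main obstacle — is to verify that this relation is genuinely independent of all the relations already imposed \emph{within} $W(t_L)$ and $W(t_R)$, and moreover that it is non-degenerate (the two linear functionals $\sum u_i$ and $\sum v_i$ are not identically zero on $W(t_L)\times W(t_R)$, equivalently each side has at least one painted interior edge whose length is not forced to $0$). Granting non-degeneracy, imposing one honest linear equation drops the dimension by exactly one: $\dim W(t) = \dim W(t_L) + \dim W(t_R) + 1 - 1 = (p_L-1)+(p_R-1) = n-2$, which is \emph{wrong} by one — so in this case the lowest painted node must also contribute a new interior edge (its own root edge, which is interior to $t$ unless the node sits at the root, but a type-(2) node is never the root node since the root edge is painted and leads into a type-(3) or type-(2) node above... ) — I would track carefully which edge at the lowest node is interior versus the root edge of $t$, recovering the missing $+1$. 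Concretely, the clean statement to prove is: $W(t)$ is (linearly isomorphic to) a polytope of dimension $n-1$, by exhibiting it as the solution set of an independent linear system inside $[0,1]^{n-2+?}$ with the right corank. The only real work is the independence/non-degeneracy of the averaging relations, which follows because each painted interior edge length can be perturbed freely upward from a generic interior point (the relations are "balanced averages" and admit an interior solution with all painted lengths equal to a common small positive constant), so the constraint gradients are linearly independent; I would make this explicit by producing such an interior point and checking the Jacobian of the relation system has full rank.
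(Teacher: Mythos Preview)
Your count of interior edges is wrong, and this is why your arithmetic keeps coming out off by one. A painted binary tree does \emph{not} have only $n-2$ interior edges: you have forgotten the bivalent type-(3) nodes. If the tree has $m$ trivalent nodes of type~(2), then the painted region (a binary subtree rooted at the root of $t$) has $m$ internal branch points and hence $m+1$ upper endpoints, each of which is a type-(3) node. Each such bivalent node subdivides an edge, so the total number of interior edges is $(n-2)+(m+1)=n+m-1$, not $n-2$. Since there is exactly one linear relation per type-(2) node, the net count is $(n+m-1)-m=n-1$, with no induction needed and no mysterious missing ``$+1$''. In your type-(2) case, the two new interior edges at the lowest node are the \emph{painted} edges from that node up to the roots (or type-(3) nodes) of $t_L$ and $t_R$; that is where your bookkeeping went astray.

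Once the count is right, what remains is exactly what you gesture at in your last paragraph: one must check that the $m$ averaging relations are linearly independent, and that the resulting affine subspace meets the open cube, so that $W(t)$ really is an $(n-1)$-dimensional polytope rather than something lower-dimensional. The paper does this directly rather than by induction: independence is argued by observing that at each type-(2) node one of the two incoming painted edge-length variables can be taken as free and the other as determined (working from the uppermost painted nodes downward, setting one branch to zero at each step forces all painted lengths to zero, which shows the relations have full rank); and an explicit interior point with all coordinates in $(0,\tfrac12]$ is constructed by assigning length $\tfrac12$ to every unpainted interior edge and small positive compatible lengths to the painted edges top-down. Your idea of exhibiting an interior point and checking full rank of the constraint system is the same in spirit, but you never actually carry it out, and your inductive scaffolding is both unnecessary and built on the wrong edge count.
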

\begin{proof}
After assigning variables to the internal edges and applying the relations, the total number of
free variables is at least the number of interior edges less the number of painted, type (2),
nodes. This difference is always one less than the number of leaves. To see that the constraining
equations really do reduce the number of free variables to $n-1,$ notice what the equations imply
about the painted interior edge lengths (the unpainted edge lengths are all free variables.)
Beginning at the painted nodes which are closest to the leaves and setting equal to zero one of the
two branches (a free variable) at each node it is seen that all the painted interior edge lengths
are forced to be zero. Thus each painted node can only contribute one free variable--the other
branch length must be dependent. Therefore, given a painted binary tree with $n$ leaves and $k$
internal edges, the space of points corresponding to the allowed choices for the edge values of
that tree is the intersection of an $(n-1)$-dimensional subspace of \textbf{R}$^k$ with $[0,1]^k.$
We see this simply by solving the system of homogeneous equations indicated by the type (2) nodes
and restricting our solution to the lengths in $[0,1].$

In fact, the intersection just described is an $(n-1)$-dimensional polytope in \textbf{R}$^k$. We
see that this is true since there is a point in the intersection for which each of the coordinates
is in the range $(0,{1 \over 2}]$. To see an example of such a point we consider edge lengths of
our binary tree such that the unpainted edges each have length ${1 \over 2}$ and such that the
painted edges have lengths in $(0,{1 \over 2}]$. To achieve the latter we
 begin at the first painted type (2) node above the root, and
consider the left and right subtrees. If the left subtree has only one painted edge we assign that
edge the length ${p \over 2n}$ where $p$ is the number of leaves of the left subtree; but if not
then we assign the root edge of the left  subtree the length ${p \over 4n}$. We do the same for the
right subtree, replacing $p$ with the number of leaves of the right subtree. This proceeds
inductively up the tree. At a given type (2) node if its left/right $p'$-leaved subtree has only
one painted edge we assign that edge the length ${p' \over d}$ where $d$ is the denominator of the
length assigned to the third edge (closest to the root) of the that node on the previous step; but
if not then we assign the root edge of the left/right subtree the length ${p' \over 2d}$ . This
produces a set of non-zero lengths which obey the relations and are all $\le {1 \over 2}.$ For
example:
\begin{small}
$$
\xy  0;/r.15pc/:
  (-10,14)*{\txt{\\\\}}="ba";
  (-2,14)*=0{}="bb";
  (2,14)*=0{}="bc";
  (10,14)*=0{}="bd";
  (-6,6)*=0{}="bv1";
  (6,6)*=0{\bullet}="bv2";
  (0,-6)*=0{\bullet}="bv3";
  (0,-13)*{}="bv4";
  (4,10)*=0{\bullet}="bva";
  (8,10)*=0{\bullet}="bvb";
  (-4,10)*=0{}="bve";
  (-8,10)*=0{}="bvf";
  (-4,2)*=0{\bullet}="bvc" ;
  (4,2)*=0{}="bvd" \ar@{};
 "ba" ;"bvc" \ar@{-};"ba" ;"bvc" \ar@{-};
  "bc" ;"bva" \ar@{-} ;
 "bd" ;"bvb" \ar@{-} \ar@{}_{1 \over 12};
 "bva";"bv2" \ar@{=} ;"bva";"bv2" \ar@{=}^{1 \over 12} ;
 "bvb";"bv2" \ar@{=}^{2 \over 12} ;
 "bv2" ;"bv3" \ar@{=}_{1 \over 6} ;
 "bvc" ;"bv3" \ar@{=} ;
 "bv3" ;"bv4" \ar@{=} \ar@{};
 "bva";"bv2" \ar@{-} ;"bva";"bv2" \ar@{-} ;
 "bvb";"bv3" \ar@{-} ;
 "bvc" ;"bv3" \ar@{-} ;
 "bv3" ;"bv4" \ar@{-} ;
 \endxy
$$
\end{small}
\end{proof}

To describe the equivalence relations on our space we recall the trees with three additional
allowed node types. They correspond to the the node types (1), (2) and (3) in that they are painted
in similar fashion.
\begin{small}
 $$
\xy  0;/r.45pc/:
  (-14,20)*=0{}="a"; (-10,20)*=0{}="b";
  (-2,20)*=0{}="c"; (2,20)*=0{}="d";
  (6,20)*=0{}="e"; (14,20)*=0{}="f";
  (-7,17) *{\dots}; (9,17) *{\dots};
  (-8,14)*=0{\bullet}="v1"; (8,14)*=0{\bullet}="v2";
  (-8,8)*{\txt{\\\\(4)}}="v3";
  (8,8)*{\txt{\\\\(5)}}="v4"; \ar@{};
 (18,20)*=0{}="g"; (24,20)*=0{}="h";
 (21,17) *{\dots};
 (21,14)*=0{\bullet}="v5";
  (21,8)*{\txt{\\\\(6)}}="v6";
 "a" ;"v1" \ar@{-};"a" ;"v1" \ar@{-};
 "b" ;"v1" \ar@{-};
 "c" ;"v1" \ar@{-};
 "d" ;"v2" \ar@{-};
 "e" ;"v2" \ar@{-};
 "f" ;"v2" \ar@{-};
 "v1" ;"v3" \ar@{-};
 "v2" ;"v4" \ar@{-};
 "g" ;"v5" \ar@{-};
 "h" ;"v5" \ar@{-};
 "v5" ;"v6" \ar@{-};
 \ar@{} ;
"d" ;"v2" \ar@{=};"d" ;"v2" \ar@{=};
 "e" ;"v2" \ar@{=};
 "f" ;"v2" \ar@{=};
 "v2" ;"v4" \ar@{=};
 "v5" ;"v6" \ar@{=};
 \endxy
$$
\end{small}
These nodes each have subtrees supported by each of their branches in order from left to right. The
interior edges of each tree are again assigned lengths in $[0,1].$ The requirements on edge lengths
which we get from each node of type (5) of valence $j+1$ are the equalities:

$$\frac{1}{p_1}\smash{\sum\limits_{i=1}^{k_1} {u_1}_i} = \frac{1}{p_2}\smash{\sum\limits_{i=1}^{k_2}
 {u_2}_i} = \dots = \frac{1}{p_j}\smash{\sum\limits_{i=1}^{k_j} {u_j}_i}$$

where $k_1 \dots k_j$ are the numbers of painted internal edges of each of the $j$ subtrees, and
$p_1 \dots p_j$ are the numbers of leaves of each of the subtrees. Now we review the equivalence
relation on trees introduced in \cite{BV1}.
\begin{definition}\label{lw}
Now the space of painted metric trees with $n$ leaves $LW\cal{U}(n)$ is formed by first taking the
disjoint union of the $(n-1)$-dimensional polytopes $W(t),$ one polytope for
 each binary painted tree. Then it is
given the
 quotient topology (of the standard topology of the disjoint union of the polytopes in \textbf{R}$^k$)
  under the following equivalence relation:
 Two trees are equivalent if they reduce to the same
tree after shrinking to points their respective edges of length zero. This is why we call the
variable assigned to interior edges ``length'' in the first place. By ``same tree'' we mean
possessing the same painted tree structure and having the same lengths assigned to corresponding
edges. For example one pair of equivalence relations appears as follows:
\begin{small}
$$
\xy  0;/r.85pc/:
  (-16,8)*=0{}="a"; (-12,8)*=0{}="b";
  (-8,8)*=0{}="c"; (-4,8)*=0{}="d";
  (0,8)*=0{}="e"; (4,8)*=0{}="f";
  (8,8)*=0{}="g"; (12,8)*=0{}="h";
  (16,8)*=0{}="i";
  (-6,4) *{=}; (6,4) *{=};
  (-10,6)*=0{\bullet}="v1"; (-12,4)*=0{\bullet}="v2";
  (-12,2)*{\bullet}="v3";
  (-12,0)*{}="v4";
  (10,6)*=0{\bullet}="v6"; (12,4)*=0{\bullet}="v7";
  (12,2)*{\bullet}="v8";
  (12,0)*{}="v9";
  (0,4)*=0{\bullet}="va"; (0,0)*=0{}="vb";
   \ar@{};
  "a" ;"v2" \ar@{-};"a" ;"v2" \ar@{-};
 "b" ;"v1" \ar@{-};
 "c" ;"v1" \ar@{-}_{0};
 "v1" ;"v2" \ar@{-}^{0};
 "v2" ;"v3" \ar@{-};
 "v3" ;"v4" \ar@{-};
 "d" ;"va" \ar@{-};
 "e" ;"va" \ar@{-};
 "f" ;"va" \ar@{-};
 "va" ;"vb" \ar@{-};
 "g" ;"v6" \ar@{-};
 "h" ;"v6" \ar@{-};
 "i" ;"v7" \ar@{-}^0;
 "v6" ;"v7" \ar@{-}^0;
 "v7" ;"v8" \ar@{-};
 "v8" ;"v9" \ar@{-};
 "v3" ;"v4" \ar@{=};"v3" ;"v4" \ar@{=};
 "va" ;"vb" \ar@{=};
 "v8" ;"v9" \ar@{=};
 \endxy
$$
\end{small}
\end{definition}
Note that an equivalence class of trees may always be represented by any one of several binary
trees, with only nodes of type (1), (2), and (3), since we can reduce the valence of nodes within
an equivalence class by introducing extra interior edges of length zero. However we often represent
the equivalence class with the unique tree that shows no zero edges. We refer to this as the
collapsed tree. Also note that the relations on the variable lengths of a tree which has some of
those lengths set to zero are precisely the relations on the variables of the collapsed tree
equivalent to it.

\begin{example}
$LW{\cal U}(1)$ is just a single point. Here is the space $LW{\cal U}(2),$ where we require $u=v:$
$$$$

\begin{small}
$$
\xy 0;/r.55pc/: (-12,8)*=0{}="a"; (-4,8)*=0{}="b";
  (4,8)*=0{}="e"; (12,8)*=0{}="f";
  (-2,4)*=0{}="c"; (2,4)*=0{}="d";
(-10,6)*=0{\bullet}="va";
 (-6,6)*=0{\bullet}="vb";
(-8,4)*=0{\bullet}="v1";
 (0,2)*=0{\bullet}="v2";
  (8,4)*=0{\bullet}="v3";
  (8,2)*=0{\bullet}="v4";
  (-8,0)*=0{}="r1";
(0,0)*=0{}="r2";
 (8,0)*=0{}="r3";
 (-8,-1)*=0{\bullet}="lp";
 (0,-1)*=0{\bullet}="cp";
 (8,-1)*=0{\bullet}="rp";
 (-8,-1)*=0{ \txt{\\\\u=v=1}};
 (0,-1)*=0{ \txt{\\\\u=v=w=0}};
 (8,-1)*=0{ \txt{\\\\w=1}};
"a" ;"v1" \ar@{-}; "a" ;"v1" \ar@{-};
 "b" ;"v1" \ar@{-};
 "v1" ;"r1" \ar@{-};
 "c" ;"v2" \ar@{-};
 "d" ;"v2" \ar@{-};
 "v2" ;"r2" \ar@{-};
 "e" ;"v3" \ar@{-};
 "f" ;"v3" \ar@{-}^w;
 "v3" ;"v4" \ar@{-};
 "v4" ;"r3" \ar@{-};
 "lp" ;"rp" \ar@{-};
 "va" ;"v1" \ar@{=}_u; "va" ;"v1" \ar@{=}^v;
 "vb" ;"v1" \ar@{=};
 "v1" ;"r1" \ar@{=};
 "v2" ;"r2" \ar@{=};
 "v4" ;"r3" \ar@{=};
 \endxy
$$
\end{small}

$$$$
$$$$

And here is the space $LW{\cal U}(3):$

\begin{small}
$$
\xy 0;/r.2pc/:
 (-32,48);(0,48) *=0{\bullet}  \ar@{-}; (-32,48);(0,48) *=0{\bullet} ^{d=1} \ar@{-};
 (0,48);(32,48)*=0{\bullet} ^{g=1} \ar@{-};
  (32,48);(44,24)*=0{\bullet} |{h=1} \ar@{-};
  (44,24); (56,0)*=0{\bullet} |>>>>>>>>{z=1} \ar@{-};
   (56,0); (44,-24)*=0{\bullet} |{x=1} \ar@{-};
   (44,-24); (32,-48)*=0{\bullet} |{r=1} \ar@{-};
    (32,-48);(0,-48)*=0{\bullet} ^{s=1} \ar@{-};
    (0,-48); (-32,-48)*=0{\bullet} ^{q=1} \ar@{-};
     (-32,-48);(-44,-24)*=0{\bullet} |{p=1} \ar@{-};
     (-44,-24); (-56,0)*=0{\bullet} |{u=1} \ar@{-};
      (-56,0);(-44,24)*=0{\bullet} |<<<<<<<<{v=1} \ar@{-};
      (-44,24); (-32,48)*=0{\bullet} |{c=1} \ar@{};
      (0,0) *=0{\bullet} \ar@{-}|<<<<<<<<<<<<<<<<<{{}^{~~a=b=u=0}} ;
      (0,0);(-44,24)  \ar@{-}|<<<<<<<<<<<<<<<<<{{}_{~x=e=f=0}} ;
      (0,0);(44,24)  \ar@{-}|<<<<<<<<<<<<<<<<<{{}^{~v=w=q=0}};
      (0,0);(-44,-24)  \ar@{-}|<<<<<<<<<<<<<<<<<{{}^{~y=z=s=0}};
      (0,0);(44,-24)  \ar@{-}|<<<<<<<<<<<<<<<<<{{}^{\text{ }~c=h=0}};
      (0,0);(0,48)  \ar@{-}|<<<<<<<<<<<<<<<<<{{}^{p=r=0}};
      (0,0);(0,-48)  \ar@{=};
      (4,4);(4,8) \ar@{=}; (4,4);(4,8) *=0{\bullet}\ar@{-}; (4,4);(4,8) \ar@{-};
      (4,8);(2,12)  \ar@{-}; (4,8); (4,12) \ar@{-}; (4,8); (6,12) \ar@{-};
  (-26,44)*=0{}="aa";
  (-18,44)*=0{}="ab";
  (-14,44)*=0{}="ac";
  (-6,44)*=0{}="ad";
  (-22,36)*=0{\bullet}="av1";
  (-10,36)*=0{}="av2";
  (-16,24)*=0{\bullet}="av3";
  (-16,16)*=0{}="av4";
  (-12,40)*=0{}="ava";
  (-8,40)*=0{}="avb";
  (-20,40)*=0{\bullet}="ave";
  (-24,40)*=0{\bullet}="avf";
  (-20,32)*=0{}="avc";
  (-12,32)*=0{\bullet}="avd";
 "aa" ;"avc" \ar@{-};"aa" ;"avc" \ar@{-};
 "ab" ;"av1" \ar@{-}  ;
  "ad" ;"avb" \ar@{-} \ar@{};
 "ave";"av1" \ar@{=}^b;"ave";"av1" \ar@{=}_a ;
 "avf";"av1" \ar@{=}_c ;
 "av1" ;"av3" \ar@{=}^d ;
 "avd" ;"av3" \ar@{=} ;
 "av3" ;"av4" \ar@{=} \ar@{};
"avb";"av3" \ar@{-} ;
 "avb";"av3" \ar@{-} ;
 "avc" ;"av3" \ar@{-} ;
 "av3" ;"av4" \ar@{-} ;
  (6,44)*=0{}="ba";
  (14,44)*=0{}="bb";
  (18,44)*=0{}="bc";
  (26,44)*=0{}="bd";
  (10,36)*=0{}="bv1";
  (22,36)*=0{\bullet}="bv2";
  (16,24)*=0{\bullet}="bv3";
  (16,16)*=0{}="bv4";
  (20,40)*=0{\bullet}="bva";
  (24,40)*=0{\bullet}="bvb";
  (12,40)*=0{}="bve";
  (8,40)*=0{}="bvf";
  (12,32)*=0{\bullet}="bvc" ;
  (20,32)*=0{}="bvd" ;
 "ba" ;"bvc" \ar@{-};"ba" ;"bvc" \ar@{-};
  "bc" ;"bva" \ar@{-} ;
 "bd" ;"bvb" \ar@{-} \ar@{};
 "bva";"bv2" \ar@{=}_e ;"bva";"bv2" \ar@{=}^f ;
 "bvb";"bv2" \ar@{=}^h ;
 "bv2";"bv3" \ar@{=}_g ;
 "bvc" ;"bv3" \ar@{=} ;
 "bv3" ;"bv4" \ar@{=} \ar@{};
 "bva";"bv2" \ar@{-} ;"bva";"bv2" \ar@{-} ;
 "bvb";"bv3" \ar@{-} ;
 "bvc" ;"bv3" \ar@{-} ;
 "bv3" ;"bv4" \ar@{-} ;
  (-48,12)*=0{}="ca";
  (-40,12)*=0{}="cb";
  (-36,12)*=0{}="cc";
  (-28,12)*=0{}="cd";
  (-44,4)*=0{\bullet}="cv1";
  (-32,4)*=0{}="cv2";
  (-38,-8)*=0{\bullet}="cv3";
  (-38,-16)*=0{}="cv4";
  (-34,8)*=0{}="cva";
  (-30,8)*=0{}="cvb";
  (-42,8)*=0{}="cve";
  (-46,8)*=0{}="cvf";
  (-42,0)*=0{\bullet}="cvc" ;
  (-34,0)*=0{\bullet}="cvd" ;
 "ca" ;"cv1" \ar@{-};"ca" ;"cv1" \ar@{-}^u;
 "cv1" ;"cvc" \ar@{-};
 "cb" ;"cv1" \ar@{-}  ;
  "cd" ;"cvb" \ar@{-} \ar@{};
 "cvc" ;"cv3" \ar@{=}_v ;
 "cvc" ;"cv3" \ar@{=}^w ;
 "cvd" ;"cv3" \ar@{=} ;
 "cv3" ;"cv4" \ar@{=} \ar@{};
"cvb";"cv3" \ar@{-} ;
 "cvb";"cv3" \ar@{-} ;
 "cvc" ;"cv3" \ar@{-} ;
 "cv3" ;"cv4" \ar@{-} ;
  (28,12)*=0{}="da";
  (36,12)*=0{}="db";
  (40,12)*=0{}="dc";
  (48,12)*=0{}="dd";
  (32,4)*=0{}="dv1";
  (44,4)*=0{\bullet}="dv2";
  (38,-8)*=0{\bullet}="dv3";
  (38,-16)*=0{}="dv4";
  (42,8)*=0{}="dva";
  (46,8)*=0{}="dvb";
  (34,8)*=0{}="dve";
  (30,8)*=0{}="dvf";
  (34,0)*=0{\bullet}="dvc" ;
  (42,0)*=0{\bullet}="dvd" ;
 "da" ;"dvc" \ar@{-};"da" ;"dvc" \ar@{-};
  "dc" ;"dva" \ar@{-} ;
 "dd" ;"dvb" \ar@{-} \ar@{};
"dvd" ;"dv3" \ar@{=}^z ;"dvd" ;"dv3" \ar@{=}_y ;
 "dvc" ;"dv3" \ar@{=} ;
 "dv3" ;"dv4" \ar@{=} \ar@{};
 "dva";"dv2" \ar@{-} ;"dva";"dv2" \ar@{-} ;
 "dvb";"dv2" \ar@{-}_x ;
 "dv2";"dvd" \ar@{-} ;
  "dvd";"dv3" \ar@{-} ;
 "dvc" ;"dv3" \ar@{-} ;
 "dv3" ;"dv4" \ar@{-} ;
  (-26,-18)*=0{}="ea";
  (-18,-18)*=0{}="eb";
  (-14,-18)*=0{}="ec";
  (-6,-18)*=0{}="ed";
  (-22,-26)*=0{\bullet}="ev1";
  (-10,-26)*=0{}="ev2";
  (-16,-38)*=0{\bullet}="ev3";
  (-16,-46)*=0{}="ev4";
  (-16,-42)*=0{\bullet}="ev5";
  (-12,-22)*=0{}="eva";
  (-8,-22)*=0{}="evb";
  (-20,-22)*=0{}="eve";
  (-24,-22)*=0{}="evf";
  (-20,-30)*=0{}="evc" ;
  (-12,-30)*=0{}="evd" ;
 "ea" ;"evc" \ar@{-};"ea" ;"evc" \ar@{-};
 "eb" ;"ev1" \ar@{-}  ;
  "ed" ;"evb" \ar@{-} \ar@{};
   "ev5" ;"ev4" \ar@{=}; "ev5" ;"ev4" \ar@{=} \ar@{};
"evb";"ev3" \ar@{-} ;
 "evb";"ev3" \ar@{-} ;
 "evc" ;"ev1" \ar@{-}_p ;
 "ev1" ;"ev3" \ar@{-}^q ;
 "ev3" ;"ev5" \ar@{-} ;
 "ev5" ;"ev4" \ar@{-} ;
  (6,-18)*=0{}="fa";
  (14,-18)*=0{}="fb";
  (18,-18)*=0{}="fc";
  (26,-18)*=0{}="fd";
  (10,-26)*=0{}="fv1";
  (22,-26)*=0{\bullet}="fv2";
  (16,-38)*=0{\bullet}="fv3";
  (16,-46)*=0{}="fv4";
  (16,-42)*=0{\bullet}="fv5";
  (20,-22)*=0{}="fva";
  (24,-22)*=0{}="fvb";
  (12,-22)*=0{}="fve";
  (8,-22)*=0{}="fvf";
  (12,-30)*=0{}="fvc" ;
  (20,-30)*=0{}="fvd" ;
 "fa" ;"fvc" \ar@{-};"fa" ;"fvc" \ar@{-};
  "fc" ;"fva" \ar@{-} ;
 "fd" ;"fvb" \ar@{-} \ar@{};
  "fv5" ;"fv4" \ar@{=}; "fv5" ;"fv4" \ar@{=} \ar@{};
 "fva";"fv2" \ar@{-} ;"fva";"fv2" \ar@{-} ;
  "fvb";"fv2" \ar@{-}^r ;
 "fv2";"fv3" \ar@{-} ;
 "fvc" ;"fv1" \ar@{-} ;
 "fv1" ;"fv3" \ar@{-}_s ;
 "fv3" ;"fv5" \ar@{-} ;
 "fv5" ;"fv4" \ar@{-} ;
(   -44 ,   78  )      *=0{}="aa";
 (   -36 ,   78  )      *=0{}="ab";
  (   -32 ,   78  ) *=0{}="ac";
 (   -24 ,   78  )      *=0{}="ad";
  (   -40 ,   70  )      *=0{\bullet}="av1";
   (   -28 ,70  )   *=0{}="av2";
 (   -34 ,   58  )      *=0{\bullet}="av3";
  (   -34 ,   50  )      *=0{}="av4";
(   -30 ,   74  )      *=0{}="ava";
 (   -26 ,   74  )      *=0{}="avb";
  (   -38 ,   74  )*=0{\bullet}="ave";
 (   -42 ,   74  )      *=0{\bullet}="avf";
  (   -38 ,   66  )      *=0{}="avc";
(   -30 ,   66  )   *=0{\bullet}="avd";
 "aa" ;"avc" \ar@{-};"aa" ;"avc" \ar@{-};
 "ab" ;"av1" \ar@{-}  ;
  "ad" ;"avb" \ar@{-} \ar@{};
 "ave";"av1" \ar@{=}^{\frac{1}{2}};"ave";"av1" \ar@{=}_{\frac{1}{2}} ;
 "avf";"av1" \ar@{=}_1 ;
 "av1" ;"av3" \ar@{=}^1 ;
 "avd" ;"av3" \ar@{=} ;
 "av3" ;"av4" \ar@{=} \ar@{};
"avb";"av3" \ar@{-} ;
 "avb";"av3" \ar@{-} ;
 "avc" ;"av3" \ar@{-} ;
 "av3" ;"av4" \ar@{-} ;
 (   24  ,   78  )   *=0{}="ba";
(   32  ,   78  )   *=0{}="bb";
 (   36  ,   78  )   *=0{}="bc";
  (   44  ,   78  )   *=0{}="bd";
   (28  ,   70  )   *=0{}="bv1";
 (   40  ,   70  )   *=0{\bullet}="bv2";
  (   34  ,   58  )*=0{\bullet}="bv3";
 (   34  ,   50  )   *=0{}="bv4";
  (   38  ,   74  )   *=0{\bullet}="bva";
   (   42,   74  )   *=0{\bullet}="bvb";
 (   30  ,   74  )   *=0{}="bve";
  (   26  ,   74  )   *=0{}="bvf";
   (30  ,   66  )   *=0{\bullet}="bvc" ;
 (   38  ,   66  )   *=0{}="bvd" ;
 "ba" ;"bvc" \ar@{-};"ba" ;"bvc" \ar@{-};
  "bc" ;"bva" \ar@{-} ;
 "bd" ;"bvb" \ar@{-} \ar@{};
 "bva";"bv2" \ar@{=}_{\frac{1}{2}} ;"bva";"bv2" \ar@{=}^{\frac{1}{2}} ;
 "bvb";"bv2" \ar@{=}^1 ;
 "bv2";"bv3" \ar@{=}_1 ;
 "bvc" ;"bv3" \ar@{=} ;
 "bv3" ;"bv4" \ar@{=} \ar@{};
 "bva";"bv2" \ar@{-} ;"bva";"bv2" \ar@{-} ;
 "bvb";"bv3" \ar@{-} ;
 "bvc" ;"bv3" \ar@{-} ;
 "bv3" ;"bv4" \ar@{-} ;
(   -78 ,   20  )            *=0{}="ca";
 (   -70 ,   20  )            *=0{}="cb";
  (   -66 ,   20  )*=0{}="cc";
 (   -58 ,   20  )            *=0{}="cd";
  (   -74 ,   12  )*=0{\bullet}="cv1";
 (   -62 ,   12  )            *=0{}="cv2";
  (   -68 ,   0   )*=0{\bullet}="cv3";
 (   -68 ,   -8  )   *=0{}="cv4";
  (   -64 ,   16  )            *=0{}="cva";
   (-60 ,   16  )            *=0{}="cvb";
 (   -72 ,   16  )      *=0{}="cve";
  (   -76 ,   16  )*=0{}="cvf";
 (   -72 ,   8   )   *=0{\bullet}="cvc" ;
  (   -64 ,   8   )*=0{\bullet}="cvd" ;
 "ca" ;"cv1" \ar@{-};"ca" ;"cv1" \ar@{-}^1;
 "cv1" ;"cvc" \ar@{-};
 "cb" ;"cv1" \ar@{-}  ;
  "cd" ;"cvb" \ar@{-} \ar@{};
 "cvc" ;"cv3" \ar@{=}_1 ;
 "cvc" ;"cv3" \ar@{=}^{\frac{1}{2}} ;
 "cvd" ;"cv3" \ar@{=} ;
 "cv3" ;"cv4" \ar@{=} \ar@{};
"cvb";"cv3" \ar@{-} ;
 "cvb";"cv3" \ar@{-} ;
 "cvc" ;"cv3" \ar@{-} ;
 "cv3" ;"cv4" \ar@{-} ;
(   58  ,   20  )      *=0{}="da";
 (   66  ,   20  )      *=0{}="db";
  (   70  ,   20  )*=0{}="dc";
 (   78  ,   20  )      *=0{}="dd";
  (   62  ,   12  )      *=0{}="dv1";
   (   74  ,   12)   *=0{\bullet}="dv2";
 (   68  ,   0   )      *=0{\bullet}="dv3";
  (   68  ,   -8  )*=0{}="dv4";
 (   72  ,   16  )      *=0{}="dva";
  (   76  ,   16  )      *=0{}="dvb";
   (   64  ,   16)      *=0{}="dve";
 (   60  ,   16  )      *=0{}="dvf";
  (   64  ,   8   )      *=0{\bullet}="dvc" ;
(   72  ,   8   )   *=0{\bullet}="dvd" ;
 "da" ;"dvc" \ar@{-};"da" ;"dvc" \ar@{-};
  "dc" ;"dva" \ar@{-} ;
 "dd" ;"dvb" \ar@{-} \ar@{};
"dvd" ;"dv3" \ar@{=}^1 ;"dvd" ;"dv3" \ar@{=}_{\frac{1}{2}} ;
 "dvc" ;"dv3" \ar@{=} ;
 "dv3" ;"dv4" \ar@{=} \ar@{};
 "dva";"dv2" \ar@{-} ;"dva";"dv2" \ar@{-} ;
 "dvb";"dv2" \ar@{-}_1 ;
 "dv2";"dvd" \ar@{-} ;
  "dvd";"dv3" \ar@{-} ;
 "dvc" ;"dv3" \ar@{-} ;
 "dv3" ;"dv4" \ar@{-} ;
(   -58 ,   -48 )   *=0{}="ea";
 (   -50 ,   -48 )   *=0{}="eb";
  (   -46 ,   -48 )   *=0{}="ec";
   (-38 ,   -48 )   *=0{}="ed";
 (   -54 ,   -56 )   *=0{\bullet}="ev1";
  (   -42 ,   -56 )*=0{}="ev2";
 (   -48 ,   -68 )   *=0{\bullet}="ev3";
  (   -48 ,   -76 )   *=0{}="ev4";
   (   -48 ,-72 )   *=0{\bullet}="ev5";
 (   -44 ,   -52 )   *=0{}="eva";
  (   -40 ,   -52 )   *=0{}="evb";
   (-52 ,   -52 )   *=0{}="eve";
 (   -56 ,   -52 )   *=0{}="evf";
  (   -52 ,   -60 )   *=0{}="evc" ;
   (-44 ,   -60 )   *=0{}="evd" ;
 "ea" ;"evc" \ar@{-};"ea" ;"evc" \ar@{-};
 "eb" ;"ev1" \ar@{-}  ;
  "ed" ;"evb" \ar@{-} \ar@{};
   "ev5" ;"ev4" \ar@{=}; "ev5" ;"ev4" \ar@{=} \ar@{};
"evb";"ev3" \ar@{-} ;
 "evb";"ev3" \ar@{-} ;
 "evc" ;"ev1" \ar@{-}_1 ;
 "ev1" ;"ev3" \ar@{-}^1 ;
 "ev3" ;"ev5" \ar@{-} ;
 "ev5" ;"ev4" \ar@{-} ;
  (   38  ,   -48 )   *=0{}="fa";
(   46  ,   -48 )   *=0{}="fb";
 (   50  ,   -48 )   *=0{}="fc";
  (   58  ,   -48 )   *=0{}="fd";
   (42  ,   -56 )   *=0{}="fv1";
 (   54  ,   -56 )   *=0{\bullet}="fv2";
  (   48  ,   -68 )*=0{\bullet}="fv3";
 (   48  ,   -76 )   *=0{}="fv4";
  (   48  ,   -72 )   *=0{\bullet}="fv5";
   (   52,   -52 )   *=0{}="fva";
 (   56  ,   -52 )   *=0{}="fvb";
  (   44  ,   -52 )   *=0{}="fve";
   (   40,   -52 )   *=0{}="fvf";
 (   44  ,   -60 )   *=0{}="fvc" ;
  (   52  ,   -60 )   *=0{}="fvd" ;
 "fa" ;"fvc" \ar@{-};"fa" ;"fvc" \ar@{-};
  "fc" ;"fva" \ar@{-} ;
 "fd" ;"fvb" \ar@{-} \ar@{};
  "fv5" ;"fv4" \ar@{=}; "fv5" ;"fv4" \ar@{=} \ar@{};
 "fva";"fv2" \ar@{-} ;"fva";"fv2" \ar@{-} ;
  "fvb";"fv2" \ar@{-}^1 ;
 "fv2";"fv3" \ar@{-} ;
 "fvc" ;"fv1" \ar@{-} ;
 "fv1" ;"fv3" \ar@{-}_1 ;
 "fv3" ;"fv5" \ar@{-} ;
 "fv5" ;"fv4" \ar@{-} ;
 (-8,66)*=0{}="a"; (8,66)*=0{}="b";
 (0,66)*=0{}="c";
  (-4,62)*=0{\bullet}="va";
  (0,62)*=0{\bullet}="vc";
 (4,62)*=0{\bullet}="vb";
(0,58)*=0{\bullet}="v1";
  (0,50)*=0{}="r1";
"a" ;"v1" \ar@{-}; "a" ;"v1" \ar@{-};
  "c" ; "v1" \ar@{-};
 "b" ;"v1" \ar@{-};
 "v1" ;"r1" \ar@{-};
 "va" ;"v1" \ar@{=}_1; "va" ;"v1" \ar@{=}^1_>>>>>>>>>>>1;
 "vb" ;"v1" \ar@{=};
 "vc" ; "v1" \ar@{=};
 "v1" ;"r1" \ar@{-};
(45,43)*=0{}="a"; (61,43)*=0{}="b";
 (53,43)*=0{}="c";
  (49,39)*=0{\bullet}="va";
 (57,39)*=0{\bullet}="vb";
(53,35)*=0{\bullet}="v1";
  (53,27)*=0{}="r1";
"a" ;"v1" \ar@{-}; "a" ;"v1" \ar@{-};
  "c" ; "vb" \ar@{-};
 "b" ;"v1" \ar@{-};
 "v1" ;"r1" \ar@{-};
 "va" ;"v1" \ar@{=}_{\frac{1}{2}}; "va" ;"v1" \ar@{=}^1;
 "vb" ;"v1" \ar@{=};
 "v1" ;"r1" \ar@{-};
(-45,43)*=0{}="a";
 (-61,43)*=0{}="b";
 (-53,43)*=0{}="c";
  (-49,39)*=0{\bullet}="va";
 (-57,39)*=0{\bullet}="vb";
(-53,35)*=0{\bullet}="v1";
  (-53,27)*=0{}="r1";
"a" ;"v1" \ar@{-}; "a" ;"v1" \ar@{-};
  "c" ; "vb" \ar@{-};
 "b" ;"v1" \ar@{-};
 "v1" ;"r1" \ar@{-};
 "va" ;"v1" \ar@{=}^{\frac{1}{2}}; "va" ;"v1" \ar@{=}_1;
 "vb" ;"v1" \ar@{=};
 "v1" ;"r1" \ar@{-};
 (-6,-50)*=0{}="a"; (6,-50)*=0{}="b";
 (0,-50)*=0{}="c";
    (0,-54)*=0{\bullet}="vc";
 (0,-58)*=0{\bullet}="v1";
  (0,-66)*=0{}="r1";
"a" ;"vc" \ar@{-}; "a" ;"vc" \ar@{-};
  "c" ; "vc" \ar@{-};
 "b" ;"vc" \ar@{-};
 "v1" ;"r1" \ar@{-};
  \ar@{-}^1;
 "vc" ; "v1" \ar@{=};
 "v1" ;"r1" \ar@{-};
(49,-19)*=0{}="a"; (65,-19)*=0{}="b";
 (57,-19)*=0{}="c";
  (53,-23)*=0{}="va";
   (61,-23)*=0{\bullet}="vb";
(57,-27)*=0{\bullet}="v1";
  (57,-35)*=0{}="r1";
"a" ;"v1" \ar@{-}; "a" ;"v1" \ar@{-};
  "c" ; "vb" \ar@{-};
 "b" ;"v1" \ar@{-};
 "v1" ;"r1" \ar@{-};
 "va" ;"v1" \ar@{-}; "va" ;"v1" \ar@{-}^1;
 "vb" ;"v1" \ar@{=};
  "v1" ;"r1" \ar@{-};
(-49,-19)*=0{}="a"; (-65,-19)*=0{}="b";
 (-57,-19)*=0{}="c";
  (-53,-23)*=0{}="va";
   (-61,-23)*=0{\bullet}="vb";
(-57,-27)*=0{\bullet}="v1";
  (-57,-35)*=0{}="r1";
"a" ;"v1" \ar@{-}; "a" ;"v1" \ar@{-};
  "c" ; "vb" \ar@{-};
 "b" ;"v1" \ar@{-};
 "v1" ;"r1" \ar@{-};
 "va" ;"v1" \ar@{-}; "va" ;"v1" \ar@{-}_1;
 "vb" ;"v1" \ar@{=};
  "v1" ;"r1" \ar@{-};
 \endxy
$$
\end{small}
Note that the equations which the variables in $LW{\cal U}(3)$ must obey are:
\begin{align*}
a=b \text{ }\text{ }\text{ }\text{ }\text{ }\text{ and  }\text{ }\text{ }\text{ }\text{ }\text{
}d=\frac{1}{2}(a+b+c)\\
 e=f \text{ }\text{ }\text{ }\text{ }\text{ }\text{ and  }\text{ }\text{ }\text{ }\text{ }\text{ } g = \frac{1}{2}(e+f+h)
 \\
 w= \frac{1}{2}v \text{ }\text{ }\text{ }\text{ }\text{ }\text{ and  }\text{ }\text{ }\text{ }\text{ }\text{ } y=
 \frac{1}{2}z
\end{align*}
\end{example}
In \cite{Mau} the space of painted metric trees (bicolored metric ribbon trees) is described in a
slightly different way. First, the trees are not drawn with painted edges, but instead the nodes of
type (3) are indicated by color, and the edges between the root and those nodes can be assumed to
be painted. The correspondence is clear: for example,
\begin{small}
 $$
\xy  0;/r.25pc/:
  (-10,20)*=0{}="a"; (-2,20)*=0{}="b";
  (2,20)*=0{}="c"; (10,20)*=0{}="d";
  (-6,12)*=0{\bullet}="v1"; (6,12)*=0{\bullet}="v2";
  (0,0)*=0{\bullet}="v3";
  (0,-7)*=0{}="v4";
  (4,16)*=0{\bullet}="va";
  (8,16)*=0{\bullet}="vb";
  (-4,8)*=0{\bullet}="vc" \ar@{};
 "a" ;"v1" \ar@{-};"a" ;"v1" \ar@{-}_{x};
 "v1" ;"vc" \ar@{-};
 "b" ;"v1" \ar@{-}  ;
 "c" ;"va" \ar@{-} ;
 "d" ;"vb" \ar@{-} \ar@{}_{y};
 "va";"v2" \ar@{=} ;"va";"v2" \ar@{=}^{z} ;
 "vb";"v2" \ar@{=}^v ;
 "v2" ;"v3" \ar@{=}_{u};
 "vc" ;"v3" \ar@{=} ;
 "v3" ;"v4" \ar@{=} \ar@{};
 %
 %
 "va";"v2" \ar@{-} ;"va";"v2" \ar@{-} ;
 "vb";"v3" \ar@{-} ;
 "vc" ;"v3" \ar@{-} ;
 "v3" ;"v4" \ar@{-} ;
 \endxy
 =
\xy  0;/r.25pc/:
  (-10,20)*=0{}="a"; (-2,20)*=0{}="b";
  (2,20)*=0{}="c"; (10,20)*=0{}="d";
  (-6,12)*=0{\circ}="v1"; (6,12)*=0{\circ}="v2";
  (0,0)*=0{\circ}="v3";
  (0,-7)*=0{}="v4";
  (4,16)*=0{\bullet}="va";
  (8,16)*=0{\bullet}="vb";
  (-4,8)*=0{\bullet}="vc" \ar@{};
 "a" ;"v1" \ar@{-};"a" ;"v1" \ar@{-}_{x};
 "v1" ;"vc" \ar@{-};
 "b" ;"v1" \ar@{-}  ;
 "c" ;"va" \ar@{-} ;
 "d" ;"vb" \ar@{-} \ar@{}_{y};
 "va";"v2" \ar@{-} ;"va";"v2" \ar@{-}^{z} ;
 "vb";"v2" \ar@{-}^v ;
 "v2" ;"v3" \ar@{-}_{u};
 "vc" ;"v3" \ar@{-} ;
 "v3" ;"v4" \ar@{-} \ar@{};
 %
 %
 "va";"v2" \ar@{-} ;"va";"v2" \ar@{-} ;
 "vb";"v3" \ar@{-} ;
 "vc" ;"v3" \ar@{-} ;
 "v3" ;"v4" \ar@{-} ;
 \endxy
$$
\end{small}
Secondly, the relations required of the painted lengths are different. In \cite{Mau} it is required
that the sum of the painted lengths along a path from the root to a leaf must always be the same.
For example, for the above tree, the new relations obeyed in \cite {Mau} are $u= v+y = v+z.$ This
provides the same dimension of $n-1$ for the space associated to a single binary tree with $n$
leaves as found in Theorem~\ref{dimtree} in this paper.

Thirdly the topology on the space of painted metric trees with $n$ leaves is described by first
assigning lengths in $(0,\infty)$ and then defining the limit as some lengths in a given tree
approach 0 as being the tree with those edges collapsed. This topology clearly is equivalent to the
definition as a quotient space given here and in \cite{BV1}. Thus we can use the results of
\cite{Mau} to show the following:
\begin{lemma}\label{ball}
The space $LW{\cal U}(n)$ is homeomorphic  to the closed ball in \textbf{R}$^{n-1}.$
\end{lemma}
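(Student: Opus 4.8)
The plan is to give two proofs: one invoking the moduli-space description of \cite{Mau}, and one building an explicit homeomorphism onto the convex hull of Section~4.

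\emph{First proof (via \cite{Mau}).} Woodward and Mau show that their space of bicolored metric ribbon trees with $n$ leaves — which realizes $\cal{J}(n)$ — is homeomorphic to the closed $(n-1)$-ball. By the discussion preceding the lemma, the topology they use (lengths in $(0,\infty)$, with limits taken by collapsing edges whose lengths go to $0$) agrees with the quotient topology on $LW{\cal U}(n)$; the only difference between the two spaces is the linear system imposed on the painted edge lengths — the averaging equations at nodes of type (2) and (5) here, the equality of total painted length along all root-to-leaf paths there. I would therefore produce, for each binary painted tree $t$ with $n$ leaves, an isomorphism of polytopes $\phi_t\colon W(t)\to W_{\mathrm M}(t)$ between the two $(n-1)$-dimensional cells that is a linear change of the painted-edge coordinates carrying the solution space of one relation system onto that of the other while fixing every unpainted edge length; by Theorem~\ref{dimtree} both systems leave the same $n-1$ coordinates free, so such a $\phi_t$ is a homeomorphism of polytopes. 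The substance is then that the family $\{\phi_t\}$ descends to the quotient: collapsing a painted edge merges two subtrees, and — as already noted — both relation systems restrict to the corresponding systems on the collapsed tree, so $\phi_t$ and $\phi_{t'}$ agree on the common face $W(t)\cap W(t')$. Gluing the $\phi_t$ then gives a homeomorphism of $LW{\cal U}(n)$ onto the Mau space, which is homeomorphic to $B^{n-1}$.

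\emph{Second proof (directly onto the convex hull).} Here I would construct a map $\Psi\colon LW{\cal U}(n)\to\mathbf{R}^{n-1}$ by sending a painted metric tree to the convex combination $\sum_{t'}\lambda_{t'}\,M_q(t')$, where $t'$ ranges over the binary painted trees refining it and the weights $\lambda_{t'}$ are normalized continuous functions of the edge lengths that vanish whenever $t'$ fails to refine the collapsed tree and equal $1$ at the vertex of $W(t')$ whose edge lengths are all maximal (equivalently: fix a polyhedral subdivision of the convex hull with one top cell per binary painted tree and let $\Psi$ carry each $W(t')$ onto its cell by matching barycentric coordinates). By construction $\Psi$ lands in $C_q(n):=\mathrm{conv}\{M_q(t)\}$; one then checks it is a well-defined continuous bijection onto $C_q(n)$, hence — both spaces being compact Hausdorff — a homeomorphism. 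Finally $C_q(n)$ is a genuinely $(n-1)$-dimensional polytope, so is homeomorphic to $B^{n-1}$: the completely painted binary trees contribute exactly Loday's points, all lying on the hyperplane $x_1+\cdots+x_{n-1}=\frac{1}{2}n(n-1)$, whereas the factor $q<1$ moves every other $M_q(t)$ strictly off that hyperplane, so the points $M_q(t)$ affinely span $\mathbf{R}^{n-1}$.

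I expect the gluing verification — that the chosen coordinate changes (or weight functions) commute with the collapse of a zero-length edge, and so descend to the quotient — to be the main obstacle in either approach. It is a bookkeeping argument resting on the locality of the defining relations (each involves only the painted edges of the two subtrees meeting at a single node) together with the observation, recorded after Definition~\ref{lw}, that the relations on a tree carrying some zero edges coincide with the relations on its collapsed form; the piecewise maps themselves are routine affine or piecewise-linear bijections of convex polytopes.
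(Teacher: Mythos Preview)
Your two-proof structure mirrors the paper's, but the execution differs in both parts.

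For the first proof, the paper is much briefer: it simply quotes from \cite{Mau} that the full space of painted metric trees with lengths in $[0,\infty)$ is homeomorphic to $\mathbf{R}^{n-1}_{\ge 0}\cup\{0\}$ (via quilted disks), and concludes that restricting lengths to $[0,1]$ yields a closed ball. Your attempt to make the comparison between the two relation systems explicit is more honest about what is being used, but the key step is not justified: a linear change of painted-edge coordinates carrying one solution subspace onto the other need not carry $W(t)=V_1\cap[0,1]^k$ onto $W_{\mathrm M}(t)=V_2\cap[0,1]^k$. The paper itself observes that in $W(t)$ some painted edges are forced to lie in $[0,p]$ with $p<1$; which edges these are, and what $p$ is, depends on the particular linear system, so there is no reason the two cube-slices should be affinely equivalent. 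Two $(n-1)$-polytopes in the same cube are of course homeomorphic, but producing homeomorphisms that agree on all shared faces requires an inductive construction over the refinement poset --- at which point you have essentially reproduced the paper's second proof rather than its first.

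For the second proof, the core idea --- cells indexed by binary painted trees, glued along refinements --- matches the paper, but the paper's mechanism is more explicit and avoids your undefined weights $\lambda_{t'}$. It takes the barycentric subdivision of the convex hull, lets $U(t)$ be the subcomplex of simplices containing the vertex $M_q(t)$ (so $U(t)$ is spanned by the barycenters $v(t')$ for all $t'$ that $t$ refines), and builds $\varphi_t\colon U(t)\to W(t)$ by induction on the refinement poset using the Alexander trick: at each stage one has a homeomorphism of boundary $(p-2)$-spheres assembled from the previously constructed $\varphi_{t''}$, and extends it over the bounding $(p-1)$-disks. This makes the gluing compatibility automatic and does not require writing down explicit convex combinations. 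Your parenthetical ``matching barycentric coordinates'' gestures at the same construction; but as stated, your version presupposes a polyhedral subdivision of $C_q(n)$ with one top cell per binary painted tree and incidences given by refinement --- which is exactly the content of Theorem~\ref{main}. (The paper's second proof leans on the same combinatorics, but is explicit about the inductive Alexander-trick machinery that makes the piecewise map well defined.) Your dimension argument for $C_q(n)$ is fine.
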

\begin{proof} (1)
In \cite{Mau} it is shown that the entire space of painted trees with $n$ leaves with lengths in
$[0,\infty)$ is homeomorphic to $\textbf{R}^{n-1}_+ \cup \textbf{0}$. (This is done via a
homeomorphism to the space of quilted disks.) Thus if the lengths are restricted to lie in $[0,1]$
then the resulting space is homeomorphic to the closed ball in \textbf{R}$^{n-1}.$
\end{proof}
However, we think it valuable to see how the homeomorphism from the entire space of trees to the
convex polytope might actually be constructed piecewise from smaller homeomorphisms based on
specific $n$-leaved trees.
\begin{proof} (2)
We will use the Alexander trick, which is the theorem that states that any homeomorphism from the
bounding sphere of one disk to another bounding sphere of a second disk may be extended to the
entire disks. We are using this to construct a homeomorphism $\varphi$ from the convex hull
realization of $\cal{J}(n)$ to $LWU(n).$  First we consider the barycentric subdivision of the
former $(n-1)$-dimensional polytope. Recalling that each face of $\cal{J}(n)$ is associated with a
specific painted $n$-leaved tree $t$, we associate that same tree to the respective barycenter
denoted $v(t)$.

We will be creating $\varphi$ inductively. We begin by defining it on the crucial barycenters. The
barycenter of the entire polytope $\cal{J}(n)$ is associated to the painted corolla, and should be
mapped to the equivalence class represented by the corolla--that is, the class of trees with all
zero length interior edges.

The barycenters of facets of $\cal{J}(n)$ are each associated to a lower or upper tree. Since the
relations on variable edge lengths are preserved by collapsing zero edges, we can see that each of
these facet trees correspond to a one dimensional subset of the space of metric trees. Upper trees
have one fewer relation than the number of painted interior edges (and no other interior edges)
while lower trees have a single interior edge. The barycenters of lower facets are mapped to the
class represented by their respective tree with edge length 1. The barycenters of upper facets are
mapped to the class represented by their respective trees with maximal edge lengths. The maximal
lengths are found by choosing an edge with maximal valence type (6) node, and assigning length 1 to
that edge. The other lengths are then determined. Examples of this are shown by the facets of the
hexagon that is $LWU(3)$ above.

Now consider a particular binary painted tree $t$, associated to a vertex $v(t)=M_q(t)$ of
$\cal{J}(n).$ The simplicial complex made up of all the simplices in the barycentric subdivision
which contain $v(t)$ we denote $U(t).$ $U(t)$ is spanned by the vertices $v(t')$ for all $t'<t.$
Recall that $t' < t$ denotes that $t'$ refines $t,$ which means that $t$ results from the collapse
of some of the internal edges of $t'$. $U(t)$ is homeomorphic to the $(n-1)$-disk. Next we will
extend our choice of images of the facet barycenters for facets adjacent to $v(t)$ to a
homeomorphism $\varphi_t : U(t) \to W(t).$ This extension will be done incrementally where the
increments correspond to the refinement of trees, so that the piecewise defined mapping $\varphi(x)
= \varphi_t(x)~ ;~ x\in U(t)$ (with one piece defined on $U(t)$ for each binary $n$-leaved $t$)
will be well defined, 1-1, and onto $LWU(n).$ $U(t)$ for a particular 4-leaved tree is pictured as
a subset of the convex hull realization of $\cal{J}(4)$ just following this proof.

The incremental construction of our homeomorphism $\varphi_t$ is by way of subdividing the
respective boundaries of $U(t)$ and $W(t)$ based upon tree refinement.  For each tree $t' < t$, let
$p$ be the number of free variables in the metric version of $t'$ (so $n-(p+1)$ is the dimension of
the face associated to $t'$), and define $U(t')$ to be the sub-complex of $p$-simplices of $U(t)$
spanned by $v(t')$ and all the $v(t'')$ for $t'' < t'.$ $U(t')$ is a $p$-disk by construction. Also
define $W(t')$ to be the sub-space of the boundary of $W(t)$ given by all those equivalence classes
which can be represented by a metric version of $t'$, with interior edge lengths in [0,1]. By a
parallel argument to Theorem~\ref{dimtree} $W(t')$ is also a  $p$-disk.

To establish the base case we consider a facet barycenter (with associated tree $t' < t$). The
barycenter $v(t')$ and the barycenter of $\cal{J}(n)$ form a copy of $S^0$ bounding the 1-simplex
$U(t')$. Now the 1-dimensional subset $W(t')$ of the boundary of $W(t)$  is made up of equivalence
classes of trees represented by metric versions of $t'.$ The boundary of this 1-disk is the copy of
$S^0$ given by the tree with all zero lengths and the tree with maximal length. Thus we can extend
that choice of images made above to a homeomorphism $\varphi_{t'}$ of the 1-disks for each $t'.$

For an arbitrary tree $t'$ the boundary of $U(t')$ is a $(p-1)$-spherical simplicial complex that
is made up of two $(p-1)$-disks. The first \emph{interior} disk is the union of $U(t'')$ for $t'' <
t'.$ Each $(p-1)$-simplex in this first disk contains the barycenter of $\cal{J}(n).$ Each
$(p-1)$-simplex in the second \emph{exterior} disk contains $v(t).$ The shared boundary of the two
disks is a $(p-2)$-sphere. The boundary of $W(t')$ is also made up of two $(p-1)$-disks. The first
disk is the union of $W(t'')$ for $t'' < t'.$ The second disk is the collection of equivalence
classes of metric trees represented by $t'$ with at least one edge set equal to 1. Now we can build
$\varphi_t$ inductively by assuming it to be defined on the disks: $U(t'') \to W(t'')$ for all
trees $t''< t'.$ This assumed mapping may then be restricted to a homeomorphism of the
$(p-2)$-spheres that are the respective boundaries of the interior disks, which in turn can then be
extended to the exterior disks and thus the entire  $(p-1)$-spherical boundaries of $U(t')$ and
$W(t').$ From there the homeomorphism can be extended to the entire $p$-disks: $U(t') \to W(t').$
This continues inductively until, after the last extension, the resulting homeomorphism is called
$\varphi_t: U(t)\to W(t).$

Now by construction the map $\varphi : \cal{J}(n)\to LWU(n)$ given by $\varphi(x) = \varphi_t(x)
~;~ x\in U(t)$ is well defined, continuous, bijective and open.



\begin{small}
$$
\xy 0;/r1pc/:
  (-4,0)*=0{\bullet}="1a";
  (0,-2)*=0{\bullet}="1b";
  (6,1)*=0{\bullet}="1c";
  (2,4.418)*=0{\bullet}="1d";
  (-3,2.7)*=0{\bullet}="1e";
  (-10,-2)*=0{\bullet}="2a";
  (0,-7)*=0{\bullet}="2b";
  (16,2)*=0{\bullet}="2c";
  (5,10)*=0{\bullet}="2d";
  (-6,6)*=0{\bullet}="2e";
  (-14,-2)*=0{\bullet}="3a";
  (-14,-8)*=0{\bullet}="3b";
  (-10,-10)*=0{\bullet}="3c";
  (0,-10.4)*=0{\bullet}="3d";
  (16,-1.418)*=0{\bullet}="3e";
  (18.418,1)*=0{\bullet}="3f";
  (18.418,4.418)*=0{\bullet}="3g";
  (7.418,12.418)*=0{\bullet}="3h";
  (2,12.418)*=0{\bullet}="3i";
  (-3,11)*=0{\bullet}="3j";
  (-10,6)*=0{\bullet}="3k";
 (14.222,1)*{\hole}="x"; (14.222,1)*{\hole}="x";
(16,1)*{\hole}="y";
  (-3,7.091)*{\hole}="z";
 (2,8.909)*{\hole}="w";
 (-7.846,-3.077)*{\hole}="p";
  (-3.846,-5.077)*{\hole}="q";
  "1a";"p" \ar@{-}; "1a";"p"\ar@{-};
  "p";"3b"  \ar@{-};
    "1b";"q" \ar@{-};
  "q";"3c" \ar@{-};
 "1c";"x" \ar@{-};
 "x";"y" \ar@{-};
 "y";"3f" \ar@{-};
 "1d";"w" \ar@{-};
 "w";"3i" \ar@{-};
 "1e";"z" \ar@{-};
 "z";"3j" \ar@{-};
 "2a";"3a" \ar@{-};
 "2b";"3d" \ar@{-};
  "2c";"3e" \ar@{-};
  "2c";"3g" \ar@{-};
 "2d";"3h" \ar@{-};
 "2e";"3k" \ar@{-};
"1a";"1b" \ar@{-}; "1a";"1b" \ar@{-};
 "1b";"1c" \ar@{-};
 "1c";"1d" \ar@{-};
 "1d";"1e" \ar@{-};
 "1e";"1a" \ar@{-};
 "2a";"2b" \ar@{-};
 "2b";"2c" \ar@{-};
 "2c";"2d" \ar@{-};
 "2d";"2e" \ar@{-};
 "2e";"2a" \ar@{-};
 "3a";"3b" \ar@{-};
  "3b";"3c" \ar@{-};
  "3c";"3d" \ar@{-};
  "3d";"3e" \ar@{-};
  "3e";"3f" \ar@{-};
  "3f";"3g" \ar@{-};
  "3g";"3h" \ar@{-};
  "3h";"3i" \ar@{-};
  "3i";"3j" \ar@{-};
  "3j";"3k" \ar@{-};
  "3k";"3a" \ar@{};
  %
  %
(8,2.75)*=0{\bullet}="u1";
 (10.5,6)*=0{\bullet}="u2";
 (13,6)*=0{\bullet}="u3";
 (17.209,3.209)*=0{\bullet}="u4";
 (17,1.5)*=0{\bullet}="u5";
 (16,0)*=0{\bullet}="u6";
 (13,-1.25)*=0{\bullet}="u7";
 (10,-1.375)*=0{\bullet}="u8";
 (0,1)*=0{\bullet}="u0";
"u1";"u2" \ar@{--};"u1";"u2" \ar@{--};
 "u2";"u3" \ar@{--};
 "u3";"u4" \ar@{--};
 "u4";"u5" \ar@{--};
 "u5";"u6" \ar@{--};
 "u6";"u7" \ar@{--};
 "u7";"u8" \ar@{--};
 "u1";"u8" \ar@{--};
"u1";"2c" \ar@{--};
 "u2";"2c" \ar@{--};
 "u3";"2c" \ar@{--};
 "u4";"2c" \ar@{--};
 "u5";"2c" \ar@{--};
 "u6";"2c" \ar@{--};
 "u7";"2c" \ar@{..};
 "u0";"u1" \ar@{..};
 "u0";"u2" \ar@{..};
 "u0";"u3" \ar@{..};
 "u0";"u4" \ar@{..};
 "u0";"u5" \ar@{..};
 "u0";"u6" \ar@{..};
 "u0";"u7" \ar@{..};
"u0";"u8" \ar@{};
 (11,-7)*=0{}="a"; (13,-7)*=0{}="b";
  (14,-7)*=0{}="c"; (16,-7)*=0{}="d";
  (12,-9)*=0{\bullet}="v1"; (15,-9)*=0{\bullet}="v2";
  (13.5,-12)*=0{\bullet}="v3";
  (13.5,-13.75)*=0{}="v4";
  (14.5,-8)*=0{\bullet}="va";
  (15.5,-8)*=0{\bullet}="vb";
  (12.5,-8)*=0{\bullet}="vc";
  (11.5,-8)*=0{\bullet}="vd"; \ar@{};
 "a" ;"v1" \ar@{-};"a" ;"v1" \ar@{-};
 "v1" ;"vc" \ar@{-};
 "b" ;"v1" \ar@{-}  ;
 "c" ;"va" \ar@{-} ;
 "d" ;"vb" \ar@{-} \ar@{};
 "va";"v2" \ar@{=} ;"va";"v2" \ar@{=};
 "vb";"v2" \ar@{=};
 "v2" ;"v3" \ar@{=};
 "vd" ;"v3" \ar@{=} ;
  "vc" ;"v1" \ar@{=} ;
 "v3" ;"v4" \ar@{=} \ar@{};
 "va";"v2" \ar@{-} ;"va";"v2" \ar@{-} ;
 "vb";"v3" \ar@{-} ;
 "vd" ;"v3" \ar@{-} ;
 "v3" ;"v4" \ar@{} ;
 (13,-10) *={U \left( \xy  0;/r.25pc/:
  (-10,18)*=0{}="a"; (-2,18)*=0{}="b";
  (2,18)*=0{}="c"; (14,18)*=0{~}="d";
  (-6,12)*=0{~}="v1"; (6,12)*=0{~}="v2";
  (0,0)*=0{~}="v3";
  (0,-5)*=0{}="v4";
 \endxy
  \right)}
    \endxy
$$
\end{small}
\end{proof}
\section{Proof of Theorem~\ref{main}}\label{proofsec}
 To demonstrate that our convex hulls are each combinatorially equivalent to the
  corresponding convex $CW$-complexes defined by Iwase and
 Mimura, we need only check that they both have the same vertex-facet incidence. We will show that for
 each $n$
 there is an isomorphism $f$ between the  vertex sets (0-cells) of our convex hull and ${\cal J}(n)$
 which preserves the sets of vertices
  corresponding to facets; i.e. if $S$ is the set of vertices of a facet of our convex hull then
   $f(S)$ is a vertex set of
  a facet of ${\cal J}(n).$

  To demonstrate the existence of the isomorphism, noting that the vertices of ${\cal J}(n)$
  correspond to the binary painted trees, we only
 need to check that the points we calculate from those binary painted trees are indeed the vertices of their convex hull.
 The isomorphism implied is the one that takes a vertex associated to a certain tree to the 0-cell
 associated to the same tree. Now a given facet of ${\cal J}(n)$ corresponds to a tree $T$ which is one of the
 two sorts of trees pictured in Definitions~\ref{ufacets} and~\ref{lfacets}.
 To show that our implied isomorphism of vertices preserves vertex sets of
facets we need to show  that for each $T$ there is one facet that is the convex hull of the points
corresponding to the binary trees which are \emph{refinements} of $T$. By refinement of painted
trees we refer to the relationship: $t$ refines $t'$ if $t'$ results from the collapse of some of
the internal edges of $t$.
Note that the two sorts of trees pictured in Definitions~\ref{ufacets} and~\ref{lfacets} are each a
single collapse away from being the painted corolla.


The proofs of both key points will proceed in tandem, and will be inductive.
 The main strategy will
be to define a dimension $n-2$ affine hyperplane $H_q(T)$ in {\bf R}$^{n-1}$ for each of  the upper
and lower facet trees $T$ (as drawn in the Definitions~\ref{ufacets} and~\ref{lfacets}), and then
to show that these are the proper bounding hyperplanes of the convex hull (i.e. that each actually
contains a facet). The definition of hyperplane will actually generalize our algorithm for finding
a point $M_q(t)$ in {\bf R}$^{n-1}$ from a binary tree $t$ with $n$ leaves. The proof of
Theorem~\ref{main} will however not use these hyperplanes directly, but recast them in a weighted
version. Then they will be recovered when the weights are all set equal to 1.
\begin{definition}
The lower facets ${\cal J}_k(r,s)$ correspond to lower trees such as:
 \begin{small}
 $$
\xy (0,0) *{l(k,s) =}\endxy
\xy (0,0) *{ \xy  0;/r.45pc/:
    (-8,22) *{{s \atop \overbrace{~~~~~~~~~}}};
   (-22,20)*{{}^0}="L";(6,20)*{{}^{n-1}}="R";
  (-14,20)*=0{}="a"; (-10,20)*{{}^{k-1}}="b";
  (-6,20)*=0{}="b2";
  (-2,20)*=0{}="c";
  (-8,14)*=0{\bullet}="v1";
  (-8,8)*=0{}="v3";
  (-8,16)*=0{\bullet}="v0";
  (-15,18) *{\dots};
  (-1,18) *{\dots};
  (-8,19) *{\dots};
   "a" ;"v1" \ar@{-};"a" ;"v1" \ar@{-};
   "L" ;"v1" \ar@{-};
   "R" ;"v1" \ar@{-};
 "b" ; "v0" \ar@{-};
 "b2" ; "v0" \ar@{-};
 "v0"; "v1" \ar@{-};
 "v1" ; "c" \ar@{-};
  "v1" ;"v3" \ar@{-};
  \ar@{} ;
"v1" ;"v3" \ar@{=};"v1" ;"v3" \ar@{=};
 \endxy} \endxy
$$
\end{small}

These are assigned a hyperplane $H_q(l(k,s))$ determined by the equation
$$x_k + \dots + x_{k+s-2}
= {q \over 2}s(s-1).$$

\end{definition}
 Recall that $r$ is the number of branches extending from the lowest node,
and $r+s = n+1.$ Thus $1\le k \le r.$ Notice that if $s = n$ (so $r= k =1$) then this becomes the
hyperplane given by
$$x_1 + \dots + x_{n-1} = {q \over 2}n(n-1) = qS(n-1).$$
 Therefore the points
$M_q(t)$ for $t$ a binary tree with only nodes type (1) and (3) will lie in the hyperplane
$H_q(l(1,n))$ by Lemma 2.5 of \cite{loday}. (Simply multiply both sides of the relation proven
there by $q$.) Also note that for $q=1$ (thus disregarding the painting) that these hyperplanes are
an alternate to the bounding hyperplanes of the associahedron defined by Loday using admissible
shuffles. Our hyperplanes (for $q=1$) each have the same intersection with the hyperplane $H$ as
does the corresponding hyperplane $H_{\omega}$ defined by Loday (for $\omega$ corresponding to the
unpainted version of our tree $l(k,s)$.)

\begin{definition}
The upper facets ${\cal J}(t;r_1,\dots ,r_t)$ correspond to upper trees such as:
 \begin{small}
 $$
\xy (0,0) *{ u(t;r_1,\dots, r_t)= }\endxy
\xy (0,0) *{ \xy  0;/r.45pc/:
  (-.5,23)*{{}^0}="a"; (1.5,25.25) *{{r_1 \atop \overbrace{~~~}}}; (1.5,23) *{\dots}; (3.5,23)*=0{}="b";
(4.5,23)*=0{}="a2"; (6.5,25.25) *{{r_2 \atop \overbrace{~~~}}}; (6.5,23)  *{\dots};
(8.5,23)*=0{}="b2";
 (12,23)*=0{}="a3"; (14,25.25) *{{r_t \atop \overbrace{~~~}}}; (14,23) *{\dots~~}; (16,23)*{~~^{~n-1}}="b3";
   (2,20)*=0{\bullet}="d";
  (6,20)*=0{\bullet}="e"; (14,20)*=0{\bullet}="f";
 (9,17) *{\dots};
  (8,14)*=0{\bullet}="v2";
  (8,8)*=0{}="v4"; \ar@{};
 "d" ;"v2" \ar@{-}; "d" ;"v2" \ar@{-};
"a" ;"d" \ar@{-};
 "b" ;"d" \ar@{-};
  "a2" ;"e" \ar@{-};
   "b2" ;"e" \ar@{-};
 "a3" ;"f" \ar@{-};
 "b3" ;"f" \ar@{-};
 "e" ;"v2" \ar@{-};
 "f" ;"v2" \ar@{-};
  "v2" ;"v4" \ar@{-};
 \ar@{} ;
"d" ;"v2" \ar@{=};"d" ;"v2" \ar@{=};
 "e" ;"v2" \ar@{=};
 "f" ;"v2" \ar@{=};
 "v2" ;"v4" \ar@{=};
 \endxy} \endxy
$$
\end{small}

These are assigned a hyperplane $H_q(u(t;r_1,\dots, r_t))$ determined by the equation
$$x_{r_1} + x_{(r_1+r_2)} +
\dots + x_{(r_1+r_2+\dots +r_{t-1})} = {1 \over 2}\left(n(n-1)-\sum_{i=1}^t r_i(r_i-1)\right)$$ or
equivalently:
 $$x_{r_1} + x_{(r_1+r_2)} + \dots + x_{(r_1+r_2+\dots +r_{t-1})} = \sum_{1\le i<j \le t}r_ir_j.$$
  \end{definition}

 Note that if $t=n$ (so $r_i = 1$ for all $i$) that
this becomes the hyperplane given by
$$x_1 + \dots +
x_{n-1} = {1 \over 2}n(n-1) = S(n-1).$$ Therefore the points $M_q(t)$ for $t$ a binary tree with
only nodes type (2) and (3) will lie in the hyperplane $H$ by Lemma 2.5 of \cite{loday} (using
notation $S(n)$ and $H$ as in that source).

  In order to prove Theorem~\ref{main} it turns out to be expedient to prove a more general result. This
  consists of an even more flexible version of the algorithm for assigning points to binary trees
  in order to achieve a convex hull of those points which is the multiplihedron. To assign points
  in {\bf R}$^{n-1}$
  to the binary painted trees with $n$ leaves, we not only choose a value $q \in (0,1)$ but also
  an ordered $n$-tuple of positive integers $w_0, \dots, w_{n-1}.$  Now given a tree $t$
we calculate a point $M^{w_0, \dots, w_{n-1}}_q(t)$ in {\bf R}$^{n-1}$ as follows: we begin by
assigning the weight $w_i$ to the $i^{th}$ leaf. We refer to the result as a weighted tree. Then we
modify Loday's algorithm for finding the coordinate for each trivalent node by replacing the number
of leaves of the left and right subtrees with the sums of the weights of the leaves of those
subtrees. Thus we let $L_i = \sum w_k$ where the sum is over the leaves of the subtree supported by
the left branch of the $i^{th}$ node. Similarly we let $R_i = \sum w_k$ where $k$ ranges over the
leaves of the the subtree supported by the right branch.
  Then
$$M^{w_0, \dots, w_{n-1}}_q(t) = (x_1, \dots x_{n-1}) \text{ where } x_i = \begin{cases}
    qL_iR_i,& \text{if node $i$ is type (1)} \\
    L_iR_i,& \text{if node $i$ is type (2).}
\end{cases} $$
Note that the original points $M_q(t)$ are recovered if $w_i = 1$ for  $i=0,\dots,n-1.$ Thus
proving that the convex hull of the points $M^{w_0, \dots, w_{n-1}}_q(t)$ where $t$ ranges over the
binary painted trees with $n$ leaves is the $n^{th}$ multiplihedron will imply the main theorem.
For an example,  let us calculate the point in {\bf R}$^3$ which corresponds to the 4-leaved tree:

\begin{small}
 $$ t =
\xy  0;/r.25pc/:
  (-10,20)*{ w_0~}="a"; (-2,20)*{w_1~}="b";
  (2,20)*{~w_2}="c"; (10,20)*{~w_3}="d";
  (-6,12)*=0{\bullet}="v1"; (6,12)*=0{\bullet}="v2";
  (0,0)*=0{\bullet}="v3";
  (0,-7)*=0{}="v4";
  (4,16)*=0{\bullet}="va";
  (8,16)*=0{\bullet}="vb";
  (-4,8)*=0{\bullet}="vc" \ar@{};
 "a" ;"v1" \ar@{-};"a" ;"v1" \ar@{-};
 "v1" ;"vc" \ar@{-};
 "b" ;"v1" \ar@{-}  ;
 "c" ;"va" \ar@{-} ;
 "d" ;"vb" \ar@{-} \ar@{};
 "va";"v2" \ar@{=} ;"va";"v2" \ar@{=} ;
 "vb";"v2" \ar@{=} ;
 "v2" ;"v3" \ar@{=};
 "vc" ;"v3" \ar@{=} ;
 "v3" ;"v4" \ar@{=} \ar@{};
 %
 %
 "va";"v2" \ar@{-} ;"va";"v2" \ar@{-} ;
 "vb";"v3" \ar@{-} ;
 "vc" ;"v3" \ar@{-} ;
 "v3" ;"v4" \ar@{-} ;
 \endxy
$$
\end{small}

Now $M^{w_0, \dots, w_{3}}_q(t) = (qw_0w_1, (w_0+w_1)(w_2+w_3), w_2w_3). $
 To motivate this new weighted version of our algorithm we mention that the weights
 $w_0,\dots,w_{n-1}$ are to be thought of as the sizes of various trees to be grafted to the
 respective leaves. This weighting is therefore necessary to make the induction go through, since
 the induction is itself based upon the grafting of trees.

 \begin{lemma}\label{weight}
 For $q=1$ the convex hull of the points $M^{w_0, \dots, w_{n-1}}_q(t)$ for $t$ an $n$-leaved binary tree
 gives the $n^{th}$ associahedron.
 \end{lemma}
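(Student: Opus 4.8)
The plan is to deduce Lemma~\ref{weight} from Loday's theorem by recognizing the weighted points $M^{w_0,\dots,w_{n-1}}_1(t)$ as the images, under a coordinate projection, of the Loday points of the vertices of a single face of a \emph{larger, unweighted} associahedron. Put $N = w_0 + \dots + w_{n-1}$ (here the integrality of the weights is what makes the construction possible) and partition the leaves $\{0,1,\dots,N-1\}$ of an $N$-leaved tree into consecutive blocks $B_0,\dots,B_{n-1}$ with $|B_i| = w_i$. For each $i$ fix once and for all a binary tree $\beta_i$ on the leaves of $B_i$, and let $\tau_0$ be the $N$-leaved tree obtained from the $n$-leaved corolla by grafting $\beta_i$ onto its $i\th$ leaf. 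A count of interior edges shows $\tau_0$ has $N-n$ of them, so it labels a face $F$ of $\cal{K}(N)$ of dimension $n-2$; since the only non-binary interior node of $\tau_0$ is its $n$-valent central node, $F$ is combinatorially $\cal{K}(n)$ (one $\cal{K}(n)$ factor from the central node, a point for each binary node of the $\beta_i$). The binary refinements of $\tau_0$ --- equivalently, the vertices of $F$ --- are precisely the trees $T = t[\beta_0,\dots,\beta_{n-1}]$ obtained by grafting the $\beta_i$ onto the leaves of an $n$-leaved binary tree $t$, and $t \mapsto T$ is a bijection from $n$-leaved binary trees onto the vertex set of $F$.

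Next I would compute the Loday coordinate vector $M(T)\in{\bf R}^{N-1}$ for such a $T$. Its $N-1$ interior nodes split into the $n-1$ nodes coming from $t$ together with the $N-n$ nodes interior to the $\beta_i$. For a node interior to some $\beta_i$, both of its subtrees in $T$ lie entirely inside $\beta_i$, so its Loday coordinate is a constant $c_m$, the same for every $T$ of this form. For the node of $T$ coming from the $j\th$ node of $t$, its left subtree in $T$ is obtained from the left subtree at node $j$ of $t$ by replacing each leaf $k$ with $\beta_k$, hence has $\sum w_k = L_j$ leaves, and similarly $R_j$ on the right; its Loday coordinate is therefore $L_jR_j$, exactly the $j\th$ coordinate of $M^{w_0,\dots,w_{n-1}}_1(t)$. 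Thus, after the evident relabeling of the $N-1$ slots, $M(T)$ equals $M^{w_0,\dots,w_{n-1}}_1(t)$ on a \emph{fixed} set $P$ of $n-1$ ``block-boundary'' coordinates and equals the constant vector $c$ on the remaining $N-n$ coordinates. Let $\pi\colon {\bf R}^{N-1}\to{\bf R}^{n-1}$ be the linear projection onto the $P$-coordinates. Then $\pi$ carries $\{M(T)\}$ bijectively onto $\{M^{w_0,\dots,w_{n-1}}_1(t)\}$, and since $F$ lies in the affine flat on which the non-$P$ coordinates equal $c$, the restriction $\pi|_F$ is an affine isomorphism onto its image.

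Finally I would invoke Loday's theorem (\cite{loday}): his realization of $\cal{K}(N)$ is a genuine combinatorial polytope realization, so the face $F$ is the convex hull of exactly the points $M(T)$ for $T$ a binary refinement of $\tau_0$, and $F\cong\cal{K}(n)$ combinatorially. Applying $\pi$ yields that $\pi(F)$, which is $\text{conv}\{M^{w_0,\dots,w_{n-1}}_1(t)\}$, is affinely isomorphic to $F$, hence combinatorially $\cal{K}(n)$, with the vertex $M^{w_0,\dots,w_{n-1}}_1(t)$ matched to the tree $t$. The step I expect to require the most care is the coordinate bookkeeping of the second paragraph --- verifying that the nodes inherited from $t$ really occupy a $t$-independent block of slots in $M(T)$, that the $c_m$ are genuinely independent of $t$, and that $L_j,R_j$ emerge as claimed --- but this is routine once the grafting correspondence is pinned down. (Alternatively one could prove Lemma~\ref{weight} directly by a Loday-style induction using weighted analogues of the lower-facet hyperplanes $H_q(l(k,s))$, which is in effect what the proof of Theorem~\ref{main} does for general $q$; the reduction sketched here is shorter in the unweighted case and isolates precisely what must be checked.)
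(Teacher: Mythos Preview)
Your argument is correct and is essentially the paper's own proof: both reduce to Loday's theorem on $\cal{K}(N)$, $N=\sum w_i$, by grafting $w_i$-leaved trees onto the leaves of $t$ and observing that the Loday coordinates at the nodes below the grafts recover $M^{w_0,\dots,w_{n-1}}_1(t)$. The only difference is cosmetic: the paper grafts $w_i$-leaved \emph{corollas} (so the relevant face of $\cal{K}(N)$ is the product $\cal{K}(n)\times\prod_i\cal{K}(w_i)$ and one must project onto the first factor), whereas you graft \emph{fixed binary} trees $\beta_i$ (so the face is already combinatorially $\cal{K}(n)$ and the non-$P$ coordinates are constant), which is a slightly cleaner variant of the same idea.
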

 \begin{proof}
 Recall that for $q=1$ we can ignore the painting, and thus for $w_i = 1$ for $i=0,\dots,n-1$
 the points we calculate are exactly those
 calculated by Loday's algorithm.  Now for arbitrary weights $w_0, \dots, w_{n-1}$ we can form from each
 weighted tree $t$ (with those weights assigned to the respective leaves)  a non-weighted tree $t'$  formed by grafting
 a corolla with $w_i$ leaves onto the $i^{th}$ leaf of $t.$ Note that for binary trees which are refinements of $t'$
 the coordinates which correspond to the nodes of $t'$ below the grafting receive precisely
  the same value from Loday's algorithm
 which the  corresponding nodes of the original weighted tree received  from the weighted algorithm.
 Now since Loday's algorithm gives the
 vertices of the associahedra, then the binary trees which are refinements of $t'$ give the vertices of
 ${\cal K}(n) \times {\cal K}(w_0)\times \dots \times {\cal K}(w_{n-1}).$ If we restrict our attention in each entire
  binary refinement of $t'$ to the
  nodes of (the refinements of) the grafted
 corolla with $w_i$ leaves we find the vertices of ${\cal K}(w_i).$ The definition of a cartesian product of polytopes
 guarantees that the vertices of the product are points which are cartesian products of the vertices of the operands.
 Polytopes are also combinatorially invariant under change of basis, and so we can rearrange the
 coordinates of our vertices to put all the coordinates corresponding to the nodes of (the refinements of) the grafted
 corollas at the end of the point, leaving the coordinates corresponding to the nodes below the
 graft in order at the beginning of the point.
 Thus the nodes below
 the grafting correspond to the vertices of ${\cal K}(n),$ and so the weighted algorithm (with $q=1$) does give
 the vertices of ${\cal K}(n).$
 \end{proof}

\begin{lemma}\label{weightcount}
For $q=1$ the points $M^{w_0, \dots, w_{n-1}}_q(t)$ for $t$ an $n$-leaved binary tree all lie in
the $n-2$ dimensional affine hyperplane of {\bf R}$^{n-1}$ given by the equation:
$$x_1 + \dots + x_{n-1} = \sum_{1\le i<j \le (n-1)}w_iw_j.$$
\end{lemma}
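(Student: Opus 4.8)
The plan is to prove the displayed identity by induction on the number of leaves $n$, reading the right-hand side as the sum over all unordered pairs of distinct leaves $0\le i<j\le n-1$ (so that when every $w_i=1$ it specializes to $S(n-1)=\tfrac12 n(n-1)$, exactly recovering Lemma~2.5 of \cite{loday}). Since $q=1$ the painting plays no role, and each trivalent node $i$ of $t$ simply contributes $x_i=L_iR_i$, where $L_i,R_i$ are the sums of the leaf-weights of the left and right subtrees rooted at node $i$.

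For $n=1$ there are no nodes and both sides are the empty sum $0$; for $n=2$ the unique node gives $x_1=w_0w_1=\sum_{0\le i<j\le 1}w_iw_j$. For the inductive step, let $t$ be an $n$-leaved binary tree with $n\ge 2$. Its root node is node $m$ for some $1\le m\le n-1$, where $m$ is the number of leaves of the left subtree $t_L$ (leaves $0,\dots,m-1$, nodes $1,\dots,m-1$); the right subtree $t_R$ then carries leaves $m,\dots,n-1$ and nodes $m+1,\dots,n-1$. The key (elementary) observation is that the $L$- and $R$-sums of a node depend only on the subtree hanging below it, so $x_1,\dots,x_{m-1}$ are precisely the weighted Loday coordinates of $t_L$ with weights $w_0,\dots,w_{m-1}$, and $x_{m+1},\dots,x_{n-1}$ are those of $t_R$ with weights $w_m,\dots,w_{n-1}$, while the root node contributes
$$x_m=\Big(\sum_{i=0}^{m-1}w_i\Big)\Big(\sum_{j=m}^{n-1}w_j\Big)=\sum_{\substack{0\le i\le m-1\\ m\le j\le n-1}}w_iw_j.$$

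Applying the inductive hypothesis to $t_L$ and to $t_R$ and adding,
$$\sum_{i=1}^{n-1}x_i=\sum_{0\le i<j\le m-1}w_iw_j+\sum_{m\le i<j\le n-1}w_iw_j+\sum_{\substack{0\le i\le m-1\\ m\le j\le n-1}}w_iw_j,$$
and since $m$ is a cut point the three index ranges partition the set of unordered pairs $\{i,j\}$ with $0\le i<j\le n-1$ (both in the first block, both in the second block, one in each), so the total equals $\sum_{0\le i<j\le n-1}w_iw_j$, completing the induction. This common value is a constant depending only on $w_0,\dots,w_{n-1}$, so all the points $M^{w_0,\dots,w_{n-1}}_q(t)$ lie on the affine hyperplane $x_1+\dots+x_{n-1}=\sum_{0\le i<j\le n-1}w_iw_j$, which is $(n-2)$-dimensional in $\textbf{R}^{n-1}$.

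There is essentially no obstacle: the argument is a one-line recursion together with a set-partition bookkeeping, the only point worth stating carefully being the invariance of $L_i,R_i$ under passing to the ambient tree, which is what legitimizes the recursion. As an alternative more in the spirit of Lemma~\ref{weight}, one could instead graft a $w_i$-leaved corolla onto the $i$th leaf of $t$, choose any binary refinement $\tilde t$ of the result, apply Lemma~2.5 of \cite{loday} both to $\tilde t$ (with $N=\sum_i w_i$ leaves) and to each grafted corolla, and then simplify $\tfrac12 N(N-1)-\sum_i\tfrac12 w_i(w_i-1)=\sum_{0\le i<j\le n-1}w_iw_j$; the direct induction above is shorter, so I would present that one.
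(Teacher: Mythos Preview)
Your proof is correct, and your reading of the right-hand side as $\sum_{0\le i<j\le n-1}w_iw_j$ is the intended one (the statement as printed has an indexing slip). Your approach, however, differs from the paper's: you argue by direct induction on $n$, splitting $t$ at the root into left and right subtrees and observing that the three resulting sums of $w_iw_j$ partition all unordered pairs of leaves. The paper instead uses the grafting trick of Lemma~\ref{weight}: it replaces the weighted $n$-leaved tree $t$ by an unweighted tree $t'$ with $m=\sum_i w_i$ leaves (grafting a $w_i$-leaved binary tree onto leaf $i$), applies Loday's Lemma~2.5 to $t'$ to get total coordinate sum $\tfrac12 m(m-1)$, then subtracts the contributions $\tfrac12 w_i(w_i-1)$ of the grafted pieces to isolate the coordinates of $t$. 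You in fact describe exactly this alternative at the end of your proposal. Your direct induction is shorter and self-contained, needing no appeal to the unweighted case; the paper's argument has the virtue of making explicit the mechanism behind the weighted algorithm, which is reused elsewhere in Section~\ref{proofsec}.
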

\begin{proof}
In Lemma 2.5 of \cite{loday} it is shown inductively that when $w_i = 1$ for $i=1,\dots, n-1$ then
the point $M^{1,\dots, 1}_1(t)=M(t)=(x_1,\dots,x_{n-1})$ satisfies the equation
$\sum_{i=1}^{n-1}x_i = {1 \over 2}n(n-1).$ As in the proof of the previous lemma we replace the
weighted tree $t$ with the non-weighted $t'$ formed by grafting an arbitrary binary tree with $w_i$
leaves to the $i^{th}$ leaf of $t.$ Let $m = \sum_{i=1}^{n-1}w_i.$ Thus the point $M^{1,\dots,
1}_1(t')=M(t')=(x_1,\dots,x_{m})$ satisfies the equation
$$\sum_{i=1}^{m-1}x_i = {1 \over 2}m(m-1)= {1 \over 2}\sum_{i=1}^{n-1} w_i(\sum_{i=1}^{n-1} w_i-1).$$
Also the coordinates corresponding to the nodes of the grafted tree with $w_i$ leaves sum up to the
value ${1 \over 2}w_i(w_i-1).$ Thus the coordinates corresponding to the nodes below the graft,
that is, the coordinates of the original weighted tree $t$, sum up to the difference:
$${1 \over 2}\left(\sum_{i=1}^{n-1} w_i(\sum_{i=1}^{n-1} w_i-1)-\sum_{i=1}^{n-1} w_i(w_i-1)\right) =  \sum_{1\le i<j \le (n-1)}w_iw_j$$

\end{proof}

 Since we are proving that the points $M^{w_0, \dots, w_{n-1}}_q(t)$ are the vertices of the
multiplihedron, we need to define hyperplanes  $H^{w_0, \dots, w_{n-1}}_q(t)$ for this weighted
version which we will show to be the the bounding hyperplanes when $t$ is a facet tree.
\begin{definition}
\end{definition}
Recall that the lower facets ${\cal J}_k(r,s)$ correspond to lower trees such as:
 \begin{small}
 $$
l(k,s) = \xy  0;/r.45pc/:
    (-8,22) *{{s \atop \overbrace{~~~~~~~~~}}};
   (-22,20)*{{}^0}="L";(6,20)*{{}^{n-1}}="R";
  (-14,20)*=0{}="a"; (-10,20)*{{}^{k-1}}="b";
  (-6,20)*=0{}="b2";
  (-2,20)*=0{}="c";
  (-8,14)*=0{\bullet}="v1";
  (-8,8)*=0{}="v3";
  (-8,16)*=0{\bullet}="v0";
  (-15,18) *{\dots};
  (-1,18) *{\dots};
  (-8,19) *{\dots};
   "a" ;"v1" \ar@{-};"a" ;"v1" \ar@{-};
   "L" ;"v1" \ar@{-};
   "R" ;"v1" \ar@{-};
 "b" ; "v0" \ar@{-};
 "b2" ; "v0" \ar@{-};
 "v0"; "v1" \ar@{-};
 "v1" ; "c" \ar@{-};
  "v1" ;"v3" \ar@{-};
  \ar@{} ;
"v1" ;"v3" \ar@{=};"v1" ;"v3" \ar@{=};
 \endxy
$$
\end{small}

These are assigned a hyperplane $H^{w_0, \dots, w_{n-1}}_q(l(k,s))$ determined by the equation
$$x_k + \dots + x_{k+s-2} = q\left(\sum_{(k-1)\le i<j \le (k+s-2)}w_iw_j\right).$$
Recall that $r$ is the number of branches from the lowest node, and $r+s = n+1.$
\newline
\begin{lemma}\label{low}
For any painted binary tree $t$ the point $M^{w_0, \dots, w_{n-1}}_q(t)$ lies in the hyperplane
$H^{w_0, \dots, w_{n-1}}_q(l(k,s))$ iff $t$ is a refinement of $l(k,s).$ Also the hyperplane
$H^{w_0, \dots, w_{n-1}}_q(l(k,s))$ bounds below the points $M^{w_0, \dots, w_{n-1}}_q(t)$ for $t$
any binary painted tree.
\end{lemma}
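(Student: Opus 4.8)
The plan is to prove the two assertions simultaneously by a single inequality-with-equality-analysis, with no further induction needed. Throughout write $M^{w_0,\dots,w_{n-1}}_q(t)=(x_1,\dots,x_{n-1})$, set $P=\{k-1,k,\dots,k+s-2\}$ for the block of $s$ consecutive leaves cut out by $l(k,s)$, put $N(t)=x_k+\dots+x_{k+s-2}$, and let $Q=q\sum_{(k-1)\le i<j\le(k+s-2)}w_iw_j$ be the right-hand side of the hyperplane equation. I will show $N(t)\ge Q$ for every binary painted tree $t$ (this is the ``bounds below'' assertion) with equality precisely when $t$ refines $l(k,s)$ (this is the ``iff'').

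First I would record two preliminary observations. (a) A localized version of Lemma~\ref{weightcount}: for any binary tree $\tau$ with positively weighted leaves, $\sum_v L_vR_v=\sum_{a<b}w_aw_b$, the first sum over internal nodes $v$ of $\tau$ and the second over pairs of its leaves; this is proved by the same root-splitting induction as in Lemma~\ref{weightcount}. (b) For a single node $i$ of $t$ one has $L_iR_i=\sum w_aw_b$ over exactly those pairs of leaves $a<b$ whose least common ancestor in $t$ is node $i$, since such pairs are precisely those with $a$ in the left and $b$ in the right subtree of node $i$. Since $q\in(0,1)$ and all weights are positive, it follows that $x_i\ge qL_iR_i$ always, with equality iff node $i$ is of type (1).

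Now the combinatorial heart. In Loday's left-to-right numbering, node $i$ straddles the gap between leaf $i-1$ and leaf $i$; hence for any two leaves $a<b$ in $P$ the node $\mathrm{LCA}(a,b)$ straddles a gap strictly between them and so lies in $\{k,\dots,k+s-2\}$. Combining this with (a) and (b), $N(t)=\sum_{i=k}^{k+s-2}x_i\ge q\sum_{i=k}^{k+s-2}L_iR_i=q\sum_{a<b,\ \mathrm{LCA}(a,b)\in\{k,\dots,k+s-2\}} w_aw_b\ge q\sum_{a<b\in P}w_aw_b=Q$, which is the lower bound. If $t$ refines $l(k,s)$, then $t$ contains an entirely unpainted subtree $T'$ whose leaf set is exactly $P$ (the unique interior edge of $l(k,s)$ is unpainted and carries the unpainted $s$-leaf corolla, and refinement preserves the painting); the internal nodes of $T'$ are then exactly nodes $k,\dots,k+s-2$ of $t$, all of type (1), each with the same left/right weight sums in $t$ as in $T'$, so by (a) both inequalities are equalities and $N(t)=Q$. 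Conversely, suppose $N(t)=Q$, so both inequalities are tight. Tightness of the first forces every node $k,\dots,k+s-2$ to be of type (1). Tightness of the second forces $P$ to be the leaf set of a subtree $T'$ of $t$: if not, the subtree rooted at $v=\mathrm{LCA}(k-1,k+s-2)$ — the smallest subtree containing $P$ — has a leaf $a\notin P$, and pairing $a$ with $k+s-2$ or with $k-1$ (whichever lies on the opposite side of $v$) produces a pair of leaves, one of them $a\notin P$, with least common ancestor $v\in\{k,\dots,k+s-2\}$, contradicting tightness. Now the root node $\rho$ of $T'$ is one of the nodes $k,\dots,k+s-2$, hence of type (1); its parent edge in $t$ is therefore unpainted (an unpainted trivalent node cannot have a painted edge below it, since painting proceeds up both branches of a node it reaches), and this parent edge cannot be the always-painted root edge of $t$, so it is an unpainted interior edge $e$; since painted regions are connected and contain the root edge, everything above $e$ — namely $T'$ — is unpainted. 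Collapsing every interior edge of $t$ except $e$ yields exactly $l(k,s)$, so $t$ refines $l(k,s)$, completing the equivalence.

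The step I expect to be the main obstacle is the equality analysis, and within it the claim that tightness of $N(t)\ge Q$ forces $P$ to be a subtree leaf set: this requires the careful examination of $v=\mathrm{LCA}(k-1,k+s-2)$ to exhibit the ``extra'' summand $w_aw_b$ whenever the block $P$ fails to be nested in $t$. By comparison, the passage from ``all straddling nodes are unpainted'' to ``$T'$ is entirely unpainted and sits above an unpainted interior edge'' is a short deduction from the local painting conventions, and everything else is bookkeeping with the $\mathrm{LCA}$-decomposition of $L_iR_i$ in (b) together with the identity in (a).
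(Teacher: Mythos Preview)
Your proof is correct and follows the same overall strategy as the paper---establish the inequality $N(t)\ge Q$ and analyze equality by separating the two failure modes (wrong painting versus wrong branching)---but you organize the combinatorics via a cleaner device. The paper, for the branching case, takes the smallest subtree $t'$ containing $P$, writes the telescoping identity $(*)$ for the coordinates of $t'$, and then hunts for a single surplus term $w_mw_j$ not cancelled on the right; this is correct but somewhat ad hoc, and the role of the factor $q$ on the auxiliary coordinates $x_{k-p+1},\dots,x_{k-1}$ in $(*)$ is a bit delicate. Your observation (b), that $L_iR_i$ is exactly the sum of $w_aw_b$ over pairs with $\mathrm{LCA}(a,b)=i$, replaces this with a single clean partition: the nodes $k,\dots,k+s-2$ index precisely the pairs whose LCA falls in that range, which automatically contains all pairs from $P$ and contains \emph{only} those pairs iff $P$ is a subtree leaf-set. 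This makes both the inequality and its equality case immediate, and cleanly decouples the painting condition (first inequality) from the branching condition (second inequality). The payoff is that you avoid the case split in the paper's proof and never need to track which of the auxiliary nodes outside $\{k,\dots,k+s-2\}$ are painted. Your identification of the concrete extra pair $(a,k+s-2)$ or $(k-1,a)$ at $v=\mathrm{LCA}(k-1,k+s-2)$ is exactly the paper's ``key term $w_mw_j$,'' found more systematically.
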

\begin{proof}
By Lemma~\ref{weightcount} we have that any binary tree $t$ which is a refinement of the lower tree
$l(k,s)$ will yield a point $M^{w_0, \dots, w_{n-1}}_q(t)$ which lies in $H^{w_0, \dots,
w_{n-1}}_q(l(k,s)).$ To see this we simply note that the nodes in $t$ associated to the coordinates
$x_k , \dots , x_{k+s-2}$ in $M^{w_0, \dots, w_{n-1}}_q(t)$ will each be of type (1), and so we
multiply by $q$ on both sides of the equation proven in the Lemma.
\newline
We now demonstrate that if a binary tree $t$ is not a refinement of a lower tree $l(k,s)$ then the
point $M^{w_0, \dots, w_{n-1}}_q(t)$ will have the property that
$$x_k + \dots + x_{k+s-2} > q\left(\sum_{(k-1)\le i<j \le (k+s-2)}w_iw_j\right).$$
Recall that the trees which are refinements of $l(k,s)$ have all their nodes  inclusively between
$k$ and $k+s-2$ of type (1).  Now if  $t$ has these same $s-1$ nodes $k, \dots, k+s-2$ all type (1)
and is not a refinement of $l(k,s)$ then there is no node in $t$ whose deletion results in the
separation of only the leaves $k-1, \dots, k+s-2$ from the rest of the leaves of $t.$ Let $t'$ be
the subtree of $t$ determined by taking as its root the node furthest from the root of $t$ whose
deletion results in the separation of all the leaves $k-1, \dots, k+s-2$ from the rest of the
leaves of $t.$ Thus $t'$ will have more than just those $s$ leaves, say those leaves of $t$ labeled
$k-p,\dots, k+p'-2$ where $p\ge 1, ~ p' \ge s$ and at least one of the inequalities strict. Since
the situation is symmetric we just consider the case where $p'=s$ and $p>1.$ Then we have an
expression for the sum of all the coordinates whose nodes are in $t'$ and can write:
$$(*)~\txt{~~~~}~~x_k + \dots + x_{k+s-2} = q\left(\sum_{(k-p)\le i<j \le (k+s-2)}w_iw_j\right) - q(x_{k-p+1} + \dots + x_{k-1}).$$
Notice that the first sum on the right hand side of $(*)$ contains
$$ x_{k-p+1} + \dots + x_{k-1}+
\sum_{(k-1)\le i<j \le (k+s-2)}w_iw_j .$$ (There is no overlap between the coordinate values here
and the sum since each of the terms in $x_{k-p+1} + \dots + x_{k-1}$ contains a factor from
$w_{k-p} , \dots , w_{k-2}.$) The first sum on the right hand side of $(*)$ also contains at least
one term $w_mw_j$ where $(k-p)\le m \le (k-2) $ and where $w_mw_j$ does not occur as a term in
$x_{k-p+1} + \dots + x_{k-1},$ else the leaf labeled by $m$ would not lie in $t'.$ Thus we have the
desired inequality. Here is a picture of an example situation, where $p=2.$ Note that the key term
$w_mw_j$ in the above discussion is actually $w_{k-2}w_{k+1}$ in this picture.
\begin{small}
 $$
t = \xy  0;/r.55pc/:
    (2,12) *{{s \atop \overbrace{\text{\hspace{60pt}}}}};
   (-10,10)*{{}^0~}="L";(10,10)*{{~}^{n-1}}="R";
  (-6,10)*{{}^{k-2}}="a"; (-2,10)*=0{}="b";
  (2,10)*=0{}="b2";
  (6,10)*=0{}="c";
  (0,8)*=0{\bullet}="v1";
  (1,7)*=0{\bullet}="v2";
  (2,6)*=0{\bullet}="v3";
  (-2,2)*=0{\bullet}="v4";
  (0,0)*=0{\bullet}="v5";
  (0,-2)*=0{\bullet}="v6";
  (0,-4)*=0{}="v0";
   "a" ;"v2" \ar@{-};"a" ;"v2" \ar@{-};
   "L" ;"v5" \ar@{-};
   "R" ;"v5" \ar@{-};
 "b" ; "v3" \ar@{-};
 "b2" ; "v1" \ar@{-};
 "v5"; "v6" \ar@{-};
 "v3" ; "c" \ar@{-};
 "v3" ; "v4" \ar@{-};
  "v6" ;"v0" \ar@{-};
  \ar@{} ;
"v6" ;"v0" \ar@{=};"v6" ;"v0" \ar@{=};
 \endxy
$$
\end{small}
Now if in the situation for which there does not exist a node of $t$ which if deleted would
separate exactly the  leaves $k-1,\dots,k+s-2$ from the other leaves and root of $t,$ there are
also some of the nodes in $k, \dots, k+s-1$ of type (2), the inequality still holds, and now to a
greater degree since some of the factors of $q$ are missing from the right hand side.

If there does exist a node of $t$ which if deleted would separate exactly the  leaves
$k-1,\dots,k+s-2$ from the other leaves and root of $t,$ but $t$ is not a refinement of $l(k,s)$
due to the painting (some of the nodes in $k, \dots, k+s-1$ are of type (2)), then the inequality
holds precisely because the only difference left to right is that the right hand side has fewer
terms multiplied by the factor of $q.$

\end{proof}
\begin{definition}
\end{definition}
Recall that the upper facets ${\cal J}(t;r_1,\dots ,r_t)$ correspond to upper trees such as:
 \begin{small}
 $$
u(t;r_1,\dots, r_t) = \xy  0;/r.45pc/:
  (-.5,23)*{{}^0}="a"; (1.5,25.25) *{{r_1 \atop \overbrace{~~~}}}; (1.5,23) *{\dots}; (3.5,23)*=0{}="b";
(4.5,23)*=0{}="a2"; (6.5,25.25) *{{r_2 \atop \overbrace{~~~}}}; (6.5,23)  *{\dots};
(8.5,23)*=0{}="b2";
 (12,23)*=0{}="a3"; (14,25.25) *{{r_t \atop \overbrace{~~~}}}; (14,23) *{\dots~~}; (16,23)*{~~^{~n-1}}="b3";
   (2,20)*=0{\bullet}="d";
  (6,20)*=0{\bullet}="e"; (14,20)*=0{\bullet}="f";
 (9,17) *{\dots};
  (8,14)*=0{\bullet}="v2";
  (8,8)*=0{}="v4"; \ar@{};
 "d" ;"v2" \ar@{-}; "d" ;"v2" \ar@{-};
"a" ;"d" \ar@{-};
 "b" ;"d" \ar@{-};
  "a2" ;"e" \ar@{-};
   "b2" ;"e" \ar@{-};
 "a3" ;"f" \ar@{-};
 "b3" ;"f" \ar@{-};
 "e" ;"v2" \ar@{-};
 "f" ;"v2" \ar@{-};
  "v2" ;"v4" \ar@{-};
 \ar@{} ;
"d" ;"v2" \ar@{=};"d" ;"v2" \ar@{=};
 "e" ;"v2" \ar@{=};
 "f" ;"v2" \ar@{=};
 "v2" ;"v4" \ar@{=};
 \endxy
$$
\end{small}

These are assigned a hyperplane $H^{w_0, \dots, w_{n-1}}_q(u(t;r_1,\dots, r_t))$   determined by
the equation
 $$x_{r_1} + x_{(r_1+r_2)} + \dots + x_{(r_1+r_2+\dots +r_{t-1})} = \sum_{1\le i<j \le t}R_iR_j.$$
where $R_i = \sum_{}w_j$ where the sum is over the leaves of the $i^{th}$ subtree  (from left to
right) with root the type (5) node; the index $j$ goes from $(r_1+r_2+\dots +r_{i-1})$ to
$(r_1+r_2+\dots +r_{i}-1)$ (where $r_0 = 0.$)
  Note that if $t=n$ (so $r_i = 1$ for all $i$) that
this becomes the hyperplane given by
$$x_1 + \dots +
x_{n-1} = \sum_{1\le i<j \le n-1}w_iw_j.$$
\newline
\begin{lemma}\label{up}
For any painted binary tree $t$ the point $M^{w_0, \dots, w_{n-1}}_q(t)$ lies in the hyperplane
$H^{w_0, \dots, w_{n-1}}_q(u(t;r_1,\dots, r_t))$ iff $t$ is a refinement of $u(t;r_1,\dots, r_t).$
Also the hyperplane $H^{w_0, \dots, w_{n-1}}_q(u(t;r_1,\dots, r_t))$ bounds above the points
$M^{w_0, \dots, w_{n-1}}_q(t)$ for $t$ any binary painted tree.
\end{lemma}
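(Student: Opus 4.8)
The plan is to establish (a) and (b) in tandem, closely mirroring the proof of Lemma~\ref{low} but with the block decomposition playing the role of the single selected leaf-interval and the inequality running the other way. Write $\rho_i=r_1+\cdots+r_i$ for $1\le i\le t-1$ and $\rho_0=0,\ \rho_t=n$, and call $[\rho_{i-1},\rho_i-1]$ the $i$th \emph{block}, with $R_i$ its total leaf-weight. The identity I will use repeatedly is that $\sum_{1\le i<j\le t}R_iR_j=\sum w_aw_b$ summed over all pairs of leaves $a<b$ lying in distinct blocks.

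First I would check equality on refinements, i.e. the ``if'' half of (a). Being a refinement of $u(t;r_1,\dots,r_t)$ means $t$ is obtained from $u(t;r_1,\dots,r_t)$ by expanding its painted node into a fully painted binary ``meta-tree'' $M$ on $t$ meta-leaves and expanding its $i$th painted corolla into a painted binary tree $t_i$ on the $r_i$ leaves of block $i$. The node of $t$ lying between leaf $\rho_i-1$ and leaf $\rho_i$ is then the least common ancestor in $M$ of meta-leaves $i$ and $i+1$; as $i$ runs from $1$ to $t-1$ these account for all $t-1$ nodes of $M$. Each such node has type (2), and its coordinate $x_{\rho_i}$ is the product of the total weights of its left and right subtrees in $t$, which are exactly the sums of the $R_k$ over the blocks lying in the left, respectively right, meta-subtrees at that node of $M$ — i.e. $x_{\rho_i}$ is the coordinate assigned by Loday's algorithm to that node of $M$ carrying leaf-weights $R_1,\dots,R_t$. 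Summing over $i$ and invoking Lemma~\ref{weightcount} for $M$ gives $\sum_{i}x_{\rho_i}=\sum_{1\le i<j\le t}R_iR_j$, so $M^{w_0,\dots,w_{n-1}}_q(t)$ lies on the hyperplane.

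Next I would prove (b) and simultaneously set up the equality analysis. For an arbitrary binary painted tree $t$, let $\lambda_i,\mu_i$ be the total leaf-weights of the left and right subtrees at node $\rho_i$. Then $x_{\rho_i}=q^{\varepsilon_i}\lambda_i\mu_i$ with $\varepsilon_i\in\{0,1\}$, so $x_{\rho_i}\le\lambda_i\mu_i$ because $q\in(0,1)$, with equality exactly when node $\rho_i$ has type (2). Moreover $\lambda_i\mu_i=\sum w_aw_b$ over the pairs $a<b$ whose least common ancestor in $t$ is node $\rho_i$, and since the left subtree of $\rho_i$ is an interval of leaves ending at $\rho_i-1$ while the right subtree is an interval starting at $\rho_i$, every such pair straddles the boundary between block $i$ and block $i+1$ and so lies in distinct blocks. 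Therefore
$$\sum_{i=1}^{t-1}x_{\rho_i}\ \le\ \sum_{i=1}^{t-1}\lambda_i\mu_i\ =\ \sum_{\substack{a<b\\ \mathrm{lca}(a,b)\in\{\rho_1,\dots,\rho_{t-1}\}}}w_aw_b\ \le\ \sum_{\substack{a<b\\ \text{distinct blocks}}}w_aw_b\ =\ \sum_{1\le i<j\le t}R_iR_j,$$
which is (b), and equality holds throughout iff every $\rho_i$ has type (2) and every cross-block pair of leaves has its least common ancestor among the $\rho_i$.

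Finally, the ``only if'' half of (a) reduces to showing these two equality conditions force $t$ to refine $u(t;r_1,\dots,r_t)$. The crucial combinatorial claim is that ``every cross-block pair splits at one of the $\rho_i$'' is equivalent to ``each block is the leaf set of a subtree of $t$''. Given the subtrees, the $\rho_i$ are precisely the nodes of the quotient tree obtained by contracting them, so every cross-block pair splits at some $\rho_i$; that direction is easy. The converse is the one real obstacle: if a block $[\rho_{j-1},\rho_j-1]$ with at least two leaves failed to span only its own leaves, then $v=\mathrm{lca}(\rho_{j-1},\rho_j-1)$ would span a strictly larger leaf-interval, and a three-leaf least-common-ancestor argument on the triple $(\rho_{j-1}-1,\rho_{j-1},\rho_j-1)$ (or symmetrically $(\rho_{j-1},\rho_j-1,\rho_j)$) would force the cross-block pair $(\rho_{j-1}-1,\rho_j-1)$ to split at $\rho_{j-1}$, placing $v$ strictly inside one child-subtree of $\rho_{j-1}$ and so preventing $v$ from spanning the extra leaf — a contradiction. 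Once every block is a subtree, the $t-1$ nodes $\rho_i$ are exactly the nodes of the quotient tree $M$; the requirement that all $\rho_i$ have type (2) says $M$ is fully painted, and then collapsing $M$ to its painted root node and each block-subtree to a painted corolla exhibits $u(t;r_1,\dots,r_t)$ as a collapse of $t$, i.e. $t$ refines it. This closes (a), while (b) was obtained en route, completing the parallel with Lemma~\ref{low}.
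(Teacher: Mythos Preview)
Your proof is correct, and in several respects cleaner than the paper's. The essential difference is this: the paper argues by \emph{complementation}, writing
\[
x_{\rho_1}+\cdots+x_{\rho_{t-1}}=\sum_{0\le a<b\le n-1}w_aw_b-\sum_{m=1}^t P_m,
\]
where $P_m$ is the sum of the coordinates internal to block $m$, and then invokes the already-proven Lemma~\ref{low} (the lower-tree argument) to show each $P_m\ge\sum_{a<b\text{ in block }m}w_aw_b$, with strict inequality exactly when block $m$ fails to be a subtree of the right shape. Subtracting gives the upper bound, and the painting cases are handled by noting that type~(1) nodes only push things further in the desired direction.

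You instead bound the left-hand side \emph{directly}: each $x_{\rho_i}\le\lambda_i\mu_i$ records only pairs whose least common ancestor is node $\rho_i$, and every such pair is automatically cross-block because $\rho_i$ separates block $i$ from block $i+1$. This avoids the detour through Lemma~\ref{low} and makes the equality analysis transparent --- equality forces every $\rho_i$ to be type~(2) and every cross-block pair to split at some $\rho_i$, which you then convert into ``each block is a subtree'' by the three-leaf lca argument. That last step is the one place where your write-up is terse: when you say the cross-block pair $(\rho_{j-1}-1,\rho_j-1)$ is ``forced to split at $\rho_{j-1}$,'' the reason is that its lca must be some node $\rho_k$ with $\rho_{j-1}\le\rho_k\le\rho_j-1$, and the only such $\rho_k$ is $\rho_{j-1}$ itself. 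Once that is said, the contradiction with $v$ spanning leaf $\rho_{j-1}-1$ is immediate.

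What your approach buys is self-containment and a sharper picture of the equality case; what the paper's approach buys is reuse of Lemma~\ref{low}, so that the upper and lower facet arguments are visibly dual. Either is a complete proof.
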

\begin{proof}
Now by by Lemma~\ref{weightcount} we have that any binary tree $t$ which is a refinement of the
upper tree $u(t;r_1,\dots, r_t)$ will yield a point $M^{w_0, \dots, w_{n-1}}_q(t)$ which lies in
$H^{w_0, \dots, w_{n-1}}_q(u(t;r_1,\dots, r_t)).$ To see this we simply note that the  coordinates
$x_{r_1} , x_{(r_1+r_2)} , \dots , x_{(r_1+r_2+\dots +r_{t-1})}$ in $M^{w_0, \dots, w_{n-1}}_q(t)$
will each be  assigned the same value as if the original upper tree had had $r_i = 1$ for all $i$
but where the weights given were $R_0, \dots R_{n-1}$.
\newline

We now demonstrate that if a binary tree $T$ is not a refinement of an upper tree
$u(t;r_1,\dots,r_t)$ then  the point $M^{w_0, \dots, w_{n-1}}_q(T)$ will have the property that
$$x_{r_1} + x_{(r_1+r_2)} + \dots + x_{(r_1+r_2+\dots +r_{t-1})} < \sum_{1\le i<j \le t}R_iR_j.$$
Recall that $R_i = \sum_{j}w_j$ where the sum is over the leaves of the $i^{th}$ subtree  (from
left to right) with root the type (5) node; the index $j$ goes from $(r_1+r_2+\dots +r_{i-1})$ to
$(r_1+r_2+\dots +r_{i}-1)$ (where $r_0 = 0.$) If $T$ is not a refinement of $u(t;r_1,\dots,r_t)$
then for some of the partitioned sets of $r_i$ leaves in the partition $r_1,\dots ,r_t$ it is true
that there does not exist a node of $T$ which if deleted would separate exactly the leaves in that
set from the other leaves and root of $T$. Thus the proof here will use the previous result for the
lower trees. First we consider the case for which $T$ is entirely painted--it has only type (2)
nodes. Now by Lemma~\ref{weightcount} the total sum of the coordinates of $M^{w_0, \dots,
w_{n-1}}_q(T)$ will be equal to $\sum_{1\le i < j \le n-1}w_iw_j.$ Consider a (partitioned) set of
$r_m$ leaves (starting with leaf $k-1$ ) in the partition $r_1,\dots ,r_t$ for which there does not
exist a node of $T$ which if deleted would separate exactly the leaves in that set from the other
leaves and root of $T.$ (Here $k-1 = r_1+r_2+\dots +r_{m-1}$) Let $P_m$ be the sum of the $r_m -1$
coordinates $x_{k}+\dots+x_{k+r_m-2}.$ We have by the same argument used for lower trees that
$$P_m > \sum_{(k-1)\le i<j \le(k+r_m-2)} w_iw_j.$$

 Now for this $T$,  for which some of the partitioned sets of $r_i$ leaves in the partition
  $r_1,\dots ,r_t$  there does not exist a node of $T$ which if deleted would separate exactly the leaves in that
set from the other leaves and root of $T$, we have:

$$x_{r_1} + x_{(r_1+r_2)} + \dots + x_{(r_1+r_2+\dots +r_{t-1})} =  \sum_{1\le i < j \le n-1}w_iw_j -
\sum_{m=1}^t P_m < \sum_{1\le i<j \le t}R_iR_j.$$

If a tree $T'$ has the same branching structure as $T$ but with some nodes of type (1) then the
argument still holds since the argument from the lower trees still applies. Now for a tree $T$
whose branching structure is a refinement of the branching structure of  the upper tree
$u(t;r_1,\dots,r_t)$, but  which has some of its nodes $r_1 , (r_1+r_2), \dots , (r_1+r_2+\dots
+r_{t-1})$ of type (1), the inequality holds simply due to the application of some factors $q$ on
the left hand side.
\end{proof}

\begin{proof} of Theorem~\ref{main}:
Now we may proceed with our inductive argument. The base case of $n = 2$ leaves is trivial to
check. The points in {\bf R}$^{1}$ are $w_0w_1$ and $qw_0w_1.$ Their convex hull is a line segment,
combinatorially equivalent to ${\cal J}(2).$ Now we assume that for all $i<n$ and for arbitrary
$q\in (0,1)$ and for positive integer weights $w_0,\dots, w_{i-1},$ that the convex hull of the
points $\{M^{w_0, \dots, w_{i-1}}_q(t)~|~ t \text{ is a painted binary tree with $i$ leaves}\}$ in
{\bf R}$^{i-1}$ is combinatorially equivalent to the complex ${\cal J}(i),$ and that the points
$M^{w_0, \dots, w_{i-1}}_q(t)$ are the vertices of the convex hull. Now for $i=n$ we need to show
that the equivalence still holds. Recall that the two items we plan to demonstrate are that the
points $M^{w_0, \dots, w_{n-1}}_q(t)$ are the vertices of their convex hull and that the facet of
the convex hull corresponding to a given lower or upper tree $T$ is the convex hull of just the
points corresponding to the binary trees that are refinements of $T.$ The first item will be seen
in the process of checking the second.

Given an $n$-leaved lower tree $l(k,s)$ we have from Lemma~\ref{low} that the points corresponding
to binary refinements of $l(k,s)$ lie in an $n-2$ dimensional hyperplane $H^{w_0, \dots,
w_{n-1}}_q(l(k,s))$ which bounds the entire convex hull. To see that this hyperplane does indeed
contain a facet of the entire convex hull we use the induction hypothesis to show that the
dimension of the convex hull of just the points in $H^{w_0, \dots, w_{n-1}}_q(l(k,s))$ is $n-2.$
Recall that the tree $l(k,s)$ is the result of grafting an unpainted $s$-leaved corolla onto leaf
$k-1$ of an $r$-leaved painted corolla. Thus the points $M^{w_0, \dots, w_{n-1}}_q(t)$ for $t$ a
refinement of $l(k,s)$ have coordinates $x_k,\dots,x_{k+s-1}$ which are precisely those of the
associahedron ${\cal K}(s),$  by Lemma~\ref{weight} (after multiplying by $q$). Now considering the
remaining coordinates, we see by induction that they are the coordinates of the multiplihedron
${\cal J}(r).$ This is by process of considering their calculation as if performed on an $r$-leaved
weighted tree $t'$ formed by replacing the subtree of $t$ (with leaves $x_{k-1},\dots,x_{k+s-1}$)
with a single leaf of weight $\sum_{j=k-1}^{k+s-1} w_j.$ Now after a change of basis to reorder the
coordinates, we see that the points corresponding to the binary refinements of $l(k,s)$ are the
vertices of a polytope combinatorially equivalent to ${\cal J}(r) \times {\cal K}(s)$ as expected.
Since $r+s = n+1$ this polytope has dimension $r-1 + s - 2  = n -2,$  and so is a facet of the
entire convex hull.

Given an $n$-leaved upper tree $u(t,r_1,\dots,r_t)$ we have from Lemma~\ref{up} that the points
corresponding to binary refinements of $u(t,r_1,\dots,r_t)$ lie in an $n-2$ dimensional hyperplane
$H^{w_0, \dots, w_{n-1}}_q(u(t,r_1,\dots,r_t))$ which bounds the entire convex hull. To see that
this hyperplane does indeed contain a facet of the entire convex hull we use the induction
hypothesis to show that the dimension of the convex hull of just the points in
 $H^{w_0,\dots,w_{n-1}}_q(u(t,r_1,\dots,r_t))$ is $n-2.$ Recall that the tree $u(t,r_1,\dots,r_t)$ is the
result of grafting painted $r_i$-leaved corollas onto leaf $i$ of a $t$-leaved completely painted
corolla. Thus the points $M^{w_0, \dots, w_{n-1}}_q(t)$ for $T$ a refinement of
$u(t,r_1,\dots,r_t)$ have coordinates corresponding to the nodes in the $i^{th}$ subtree which are
precisely those of the multiplihedron ${\cal J}(r_i),$  by the inductive hypothesis. Now
considering the remaining coordinates, we see by Lemma~\ref{weight} that they are the coordinates
of the associahedron ${\cal K}(t).$ This is by process of considering their calculation as if
performed on an $t$-leaved weighted tree $T'$ formed by replacing each (grafted) subtree of $T$
(with $r_i$ leaves)
 with a single leaf of weight $\sum_j w_j,$ where the sum is over the $r_i$ leaves of the $i^{th}$ grafted subtree.
  Now after
a change of basis to reorder the coordinates, we see that the points corresponding to the binary
refinements of $u(t,r_1,\dots,r_t)$ are the vertices of a polytope combinatorially equivalent to
${\cal K}(t) \times {\cal J}(r_1) \times \dots \times {\cal J}(r_t)$ as expected. Since $r_1+ \dots
+ r_t = n$ this polytope has dimension $t-2 + (r_1-1) + (r_2-1) + \dots + (r_t-1) = n -2,$ and so
is a facet of the entire convex hull.

Since  each $n$-leaved binary painted tree is a refinement of some upper and or or lower trees,
then the point associated to that tree is found as a vertex of some of the facets of the entire
convex hull, and thus is a vertex of the convex hull. This completes the proof. Recall that in
Lemma~\ref{lw} we have already shown that our convex hull is homeomorphic to the space of painted
trees $LW{\cal U}(n).$
\end{proof}
 A
picture of the convex hull giving ${\cal J}(4)$ is also available at

\url{http://faculty.tnstate.edu/sforcey/ct06.htm}.

The convex hull for ${\cal J}(5)$ with 80 vertices is also pictured  there as a Schlegel diagram
generated by polymake.

\end{document}